\theoremstyle{plain}
\newtheorem{coro}{Corollary}
\newtheorem{theo}[coro]{Theorem}
\newtheorem{prop}[coro]{Proposition}
\newtheorem{lemm}[coro]{Lemma}
\newtheorem{rema}[coro]{Remark}
\newtheorem{defi}[coro]{Definition}
\newcommand{\kP}{\mathcal{P}}
\newcommand{\kQ}{\mathcal{Q}}
\newcommand{\kT}{\mathcal{T}}
\newcommand{\dloc}{\ensuremath{d_{\mathrm{loc}}}}
\newcommand{\ps}{\oplus}
\newcommand{\ns}{\ominus}
\renewcommand{\leq}{\leqslant}
\renewcommand{\geq}{\geqslant}
\newcommand{\tr}[1][]{\ensuremath{\kT_{#1}}} 
\newcommand{\trw}[2][]{\ensuremath{\kT_{#1}^{#2}}} 
\newcommand{\bns}[2][]{\ensuremath{\kQ_{#1}^{#2}}} 
\newcommand{\gnstr}[1][]{\ensuremath{Q_{#1}}} 
\newcommand{\gtr}[1][]{\ensuremath{Z_{#1}}} 
\newcommand{\pB}{\ensuremath{\mathbb{P}_{\mathrm{bol}}}}
\newcommand{\pn}{\ensuremath{\mathbb{P}_n}}
\newcommand{\plim}{\ensuremath{\mathbb{P}_\infty}}
\newcommand{\bol}{\ensuremath{\mathrm{bol}}}
\begin{document}
\normalem
\title{\bf Local convergence of large random triangulations coupled with an Ising model}
\author{\textsc{Marie Albenque}\footnote{albenque@lix.polytechnique.fr} \,, \textsc{Laurent Ménard}\footnote{laurent.menard@normalesup.org}
\, and  \textsc{Gilles Schaeffer}\footnote{gilles.schaeffer@lix.polytechnique.fr
}}
\date{}

\maketitle


\begin{abstract}
We prove the existence of the local weak limit of the measure obtained by sampling random triangulations of size $n$ decorated by an Ising configuration with a weight proportional to the energy of this configuration. To do so, we establish the algebraicity and the asymptotic behaviour of the partition functions of triangulations with spins for any boundary condition. In particular, we show that these partition functions all have the same phase transition at the same critical temperature. Some properties of the limiting object -- called the Infinite Ising Planar Triangulation -- are derived, including the recurrence of the simple random walk at the critical temperature.

\end{abstract}

\bigskip



{\let\thefootnote\relax\footnote{2010, \emph{Mathematics subject classification} 05A15, 
05A16, 
05C30, 
60C05, 
60D05, 
60K35, 
82B44 
}}

\section{Introduction}
In 2003, in order to define a model of \emph{generic} planar geometry, Angel and Schramm studied the limit of uniform triangulations on the sphere, \cite{AngelSchramm}. They proved that this model of random maps converges for the Benjamini-Schramm topology defined in \cite{BS}, and also called the \emph{local topology}. The limiting object is a probability distribution on infinite triangulations, now known as the UIPT, for Uniform Infinite Planar Triangulation. Soon after, Angel~\cite{AngelPerco} studied some properties of the UIPT. He established that the volume of the balls of the UIPT of radius $R$ scales as $R^4$ and that the site-percolation threshold is 1/2. 

Similar results (but with quite different proofs) were then obtained for quadrangulations by Chassaing and Durhuus \cite{ChassaingDurhuus} and Krikun~\cite{KrikunQuad}. Since then, the local limit of random maps has become an active area of research. The UIPT is now a well-understood object: the simple random walk on the UIPT is known to be recurrent~\cite{GGN}, precise estimates about the volume and the perimeter of the balls of radius $r$ are available~\cite{CLGpeel}, geodesic rays are known to share infinitely many cutpoints \cite{CuMe} and percolation is fairly well understood \cite{AngelPerco,AngelUIHPT,BeCuMie,BeHoSun,CuKo,GMSS}. We refer to the recent survey by Le Gall~\cite{LGsurvey} or the lecture notes by Miermont \cite{MieSF} for nice entry points to this field, and to~\cite{autopromo} for a survey of the earlier combinatorial literature on random maps.

The results cited above deal with models of maps that fall in the same ``universality class'', identified in the physics literature as the class of ``pure 2d quantum gravity'': the generating series all admit the same critical exponent and the volume of the balls of the local limits of several of those models of random maps are known to grow as $R^4$. To capture this universal behaviour, a good framework is to consider scaling limits of random maps (of finite or infinite size) in the Gromov Hausdorff topology. Indeed, for a wide variety of models the scaling limit exists and is either the Brownian map \cite{Ab,AA,BJM,LGbm,MaMo,Miebm} or the Brownian plane \cite{BMR,CLGbp}.

To escape this pure gravity behaviour, physicists have long ago understood that one should ``couple gravity with matter'', that is, consider
models of random maps endowed with a statistical physics model. From a combinatorial point of view, evidence for the existence of other
universality classes were first given by constructing models, like tree-rooted maps or triangulations endowed with Ising configurations, whose generating series exhibit a different asymptotic behaviour at criticality. One of the first such results, and the most relevant for our work,
appears in~\cite{BoulatovKazakov}, where Boulatov and Kazakov initiated the study of Ising models on random triangulations (following some earlier work by Kazakov~\cite{Kaza86}). They established the existence of a phrase transition, the critical value of the model and the corresponding critical exponents. Their result is based on the expression of the generating series of the model as a matrix integral and the use of orthogonal polynomial methods. Their result was later rederived via bijections with trees by Bousquet-Mélou and the third author \cite{BMS} and by Bouttier, di Francesco and Guitter \cite{BdFGIsing}, and more recently via a tour-de-force in generatingfunctionology by Bernardi and Bousquet-Mélou \cite{BernardiBousquet} building on a seminal series of papers by Tutte on the enumeration of colored maps, synthesized in~\cite{Tutte}.

\subsubsection*{Main results}
The aim of this paper is to build on these latter ideas to prove the local convergence of random triangulations endowed with Ising configurations. To state our main result, let us first introduce some terminology. Precise definitions will be given in Section~\ref{sub:def}. For $T$ a rooted finite triangulation of the sphere, a \emph{spin configuration} on (the vertices of) $T$ is an application $\sigma : V(T)\rightarrow \{\ns,\ps\}$. We denote by $\tr[f]$ the set of finite triangulations endowed with a spin configuration. For $(T,\sigma) \in \mathcal{T}_f$, we write $m(T,\sigma)$ for its number of monochromatic edges. Then, for $n\in \mathbb{N}$ and $\nu>0$, let $\mathbb P_n^\nu$ be the probability distribution supported on elements of $\tr[f]$ with $3n$ edges, defined by: 
\begin{equation} \label{eq:defPn}
\mathbb P_n^\nu \left( T, \sigma \right) \propto \nu^{m(T,\sigma)}\mathbf{1}_{\{ |T| = 3n \}}.
\end{equation}
Writing $\nu=\exp(-2\beta)$, this is the probability distribution obtained when sampling a triangulation with $3n$ edges together with a spin configuration on its vertices with a probability proportional to the energy in the Ising model, defined by $\mathrm{exp}(-\beta\sum_{(v,v')\in E(T)}\sigma(v)\sigma(v'))$. In particular, the model is \emph{ferromagnetic} for $\nu>1$ and \emph{antiferromagnetic} for $0< \nu < 1$. The case $\nu =1$ corresponds to uniform triangulations.

Following Benjamini and Schramm~\cite{BS}, we equip the set $\tr[f]$ with the local distance $\dloc$. For $(T,\sigma),(T',\sigma')$ in $\tr[f]$, set:
\begin{equation}
\label{eq:dloc}
\dloc((T,\sigma),(T',\sigma')) = (1 + \sup \{ R \geq 0 : B_R (T,\sigma) = B_R(T',\sigma')\})^{-1},
\end{equation}
where $B_R(T,\sigma)$ is the submap of $T$ composed by its faces having at least one vertex at distance smaller than $R$ from its root vertex, with the corresponding spins. The only difference with the usual setting is the presence of spins on the vertices and, in addition of the equality of the underlying maps, we require equality of spins as well. 

The closure $(\tr,\dloc)$ of the metric space $(\tr[f],\dloc)$ is a Polish space and elements of $\tr \setminus \tr[f]$ are called infinite triangulations with spins. The topology induced by $\dloc$ is called the \emph{local topology}. As it is often the case with local limits of planar maps, we will be especially interested in the element of $\tr$ that are \emph{one-ended}, that is the infinite triangulations $(T,\sigma) \in \tr$ for which $(T,\sigma) \setminus B_R(T,\sigma)$ has a unique infinite connected component for every $R$.

Our main theorem is the following result: 
\begin{theo}
\label{th:localCV}
For every $\nu>0$, the sequence of probability measures $\pn^\nu$ converges weakly for the local topology to a limiting probability measure $\plim^\nu$ supported on one-ended infinite triangulations endowed with a spin configuration. 

We call a random triangulation distributed according to this limiting law the \emph{Infinite Ising Planar Triangulation with parameter $\nu$} or \emph{$\nu$-IIPT}.
\end{theo}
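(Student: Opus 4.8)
The plan is to follow the classical scheme for local limits of random maps of Angel and Schramm. Weak convergence of $\pn^\nu$ in the Polish space $(\tr,\dloc)$ reduces to two statements. First (i), for every radius $r$ and every admissible finite \emph{hull} $\bar t$ — a finite triangulation with spins, having a simple boundary of some perimeter $p$ and boundary spin pattern $s$, which arises as the ball $B_r$ of some triangulation completed by all but one of the complementary components (say all but the largest) — the marginal $\pn^\nu\bigl(B_r^\bullet(T,\sigma)=\bar t\bigr)$ converges as $n\to\infty$. Second (ii), $\sup_n \pn^\nu\bigl(|B_r^\bullet|\ge A\bigr)\to 0$ as $A\to\infty$ for each $r$; since a family of triangulations with uniformly bounded hull volumes is relatively compact in $(\tr,\dloc)$, this is exactly the tightness of $(\pn^\nu)_n$. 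Given (i) and (ii), the limits $\ell_r(\bar t):=\lim_n\pn^\nu(B_r^\bullet=\bar t)$ inherit the consistency relations of the projective system $(B_r^\bullet)_r$ and satisfy $\sum_{\bar t}\ell_r(\bar t)=1$, so by the Kolmogorov/Carathéodory extension theorem they are the marginals of a Borel probability measure $\plim^\nu$ on $\tr$, and $\pn^\nu\to\plim^\nu$ weakly. As a hull of perimeter $0$ would force $T$ to equal a fixed finite triangulation $t$ with $\pn^\nu(T=t)=\nu^{m(t)}\mathbf 1_{\{|t|=3n\}}/Z^\nu_n\to 0$, each $\ell_r$ gives mass $0$ to such hulls and $\plim^\nu$ is supported on infinite triangulations.

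\textbf{Step 1: the gluing identity and the limit of the marginals.} Fix $r$ and an admissible hull $\bar t$ of perimeter $p$ and boundary spins $s$. Every $(T,\sigma)$ with $|T|=3n$ and $B_r^\bullet(T,\sigma)=\bar t$ decomposes uniquely into $\bar t$ glued along its boundary to a triangulation with a simple boundary of perimeter $p$ carrying the boundary spins $s$; splitting the monochromatic edges into those internal to $\bar t$, those internal to the complement, and those among the $p$ identified boundary edges (a number $\mu(s)$ that depends on $s$ only) gives the exact identity
\begin{equation*}
\pn^\nu\bigl(B_r^\bullet(T,\sigma)=\bar t\bigr)=\nu^{\,m_{\mathrm{int}}(\bar t)+\mu(s)}\,\frac{Z^\nu_{k(n),\,p}(s)}{Z^\nu_n},\qquad k(n)=3n-|\bar t|+p\sim 3n,
\end{equation*}
where $Z^\nu_{k,p}(s)$ is the partition function, weighted by $\nu$ to the number of internal monochromatic edges, of triangulations with a simple boundary of perimeter $p$ and boundary spins $s$ and with $k$ edges, and $Z^\nu_n$ is the partition function of closed triangulations with $3n$ edges. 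Here one invokes the enumerative input announced in the abstract: the series $\sum_k Z^\nu_{k,p}(s)x^k$ is algebraic, and a singularity analysis yields, for each fixed $(p,s)$, an equivalent $Z^\nu_{k,p}(s)\sim c^\nu_p(s)\,\rho_\nu^{-k}k^{-\gamma}$ in which the exponential rate $\rho_\nu^{-1}$ and the exponent $\gamma$ are \emph{universal} — the same for all boundary conditions and for $Z^\nu_n$ — which is precisely the assertion that all these partition functions share one phase transition at one critical value of $\nu$. Since $k(n)\sim 3n$ the polynomial corrections cancel and
\begin{equation*}
\pn^\nu\bigl(B_r^\bullet=\bar t\bigr)\xrightarrow[n\to\infty]{}\ell_r(\bar t)=\nu^{\,m_{\mathrm{int}}(\bar t)+\mu(s)}\,\frac{c^\nu_p(s)}{c^\nu}\,\rho_\nu^{\,|\bar t|-p}.
\end{equation*}

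\textbf{Step 2: tightness — the main obstacle.} The delicate point is (ii), i.e.\ upgrading pointwise convergence of marginals to $\sum_{\bar t}\ell_r(\bar t)=1$. Fatou gives only the inequality $\le 1$, so one has to rule out loss of mass, and this requires more than the asymptotics of $Z^\nu_{k,p}(s)$ for fixed $p$: one needs uniform upper bounds of the form $Z^\nu_{k,p}(s)\le C\rho_\nu^{-k}\theta_p(s)$ with perimeter weights $\theta_p(s)$ whose generating function $\sum_{p,s}\theta_p(s)x^p$ has positive radius of convergence, so that the tail $\sum_{\bar t:\,|\bar t|\ge A}\nu^{m_{\mathrm{int}}(\bar t)+\mu(s)}Z^\nu_{k(n),p}(s)/Z^\nu_n$ is dominated uniformly in $n$ by a convergent series, hence small for $A$ large. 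Establishing such bounds amounts to controlling the enumeration of triangulations with boundary jointly and uniformly in volume and perimeter, and it is here — together with proving that the singularity $\rho_\nu$ does not depend on the boundary condition — that the algebraicity and the precise sub-exponential asymptotics obtained earlier in the paper are used in full strength. I expect this to be the technical heart of the argument.

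\textbf{Step 3: one-endedness.} It remains to see that $\plim^\nu$ charges only one-ended triangulations. If $T$ were not one-ended, some ball $B_r$ would split $T$ into two infinite components, so by a countable union it suffices to show that, for each $r$, the $\plim^\nu$-probability that $T\setminus B_r$ has two components of volume $\ge j$ tends to $0$ as $j\to\infty$. By the established weak convergence this is the limit of the corresponding $\pn^\nu$-probabilities, which the gluing identity bounds by a sum of ratios $\nu^{\ldots}Z^\nu_{a,p_1}(s_1)Z^\nu_{b,p_2}(s_2)/Z^\nu_n$ with $a,b\ge j$ and $a+b\sim 3n$; by the universal asymptotics such a product of two macroscopic partition functions has the correct exponential order $\rho_\nu^{-3n}$ but an extra polynomial factor of order $(ab)^{-\gamma}/(3n)^{-\gamma}$, which is $o(1)$ when $a,b\to\infty$. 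Choosing $j=j(n)\to\infty$ slowly in the prelimit and passing to the limit shows this probability vanishes, so $\plim^\nu$ is supported on one-ended infinite triangulations, completing the proof.
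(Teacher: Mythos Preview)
Your proposal follows the same Angel--Schramm scheme as the paper and is broadly correct, but you diverge from the paper on two points worth noting.

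First, for the marginals (Step 1) you work with the hull $B_r^\bullet$ and a single complementary component, whereas the paper works directly with the ball $B_r$, which may have several holes with boundary words $\omega_1,\ldots,\omega_\ell$. The paper computes $\pn^\nu(\Delta\subset T)$ for any rigid $\Delta$ via the product $\prod_j Z_{\omega_j}$, and Theorem~\ref{theo:combimain} on each factor gives the limit (Proposition~\ref{prop:limit}). Your hull formulation is not wrong, but defining the hull at finite $n$ as ``all but the largest component'' does not align cleanly with a gluing identity (you still have to sum over which hole ends up largest); the paper's multi-hole framework sidesteps this.

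Second, and more substantially, your tightness strategy (Step 2) is \emph{not} the one the paper uses, and is in fact the harder route the authors deliberately avoid. You propose to control $\sup_n\pn^\nu(|B_r^\bullet|\ge A)$ via uniform bounds $Z^\nu_{k,p}(s)\le C\rho_\nu^{-k}\theta_p(s)$ with $\sum_{p,s}\theta_p(s)x^p$ of positive radius; this amounts to controlling the partition functions jointly in volume and perimeter. The paper instead proves tightness through a much softer argument: it shows that the root degree has bounded expectation under $\pn^\nu$ uniformly in $n$ (Lemma~\ref{lem:root-tight}, via a short injection argument for triangulations with a marked edge incident to the root), and then uses invariance of $\pn^\nu$ under rerooting along the simple random walk (Lemma~\ref{lem:invrootRW}) to propagate this to the maximum degree in every ball (Proposition~\ref{prop:tightness}). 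The compactness criterion is degree-based, not volume-based, and no estimate uniform in the perimeter is ever needed. The authors remark explicitly that they could not prove exponential tails for the root degree for all $\nu$ --- which is close to what your route would require --- so your Step~2 as written may well not go through for every $\nu>0$ with the tools of the paper.

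Your one-endedness argument (Step~3) matches the paper's.
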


Our approach to prove this convergence result is akin to Angel and Schramm's initial approach for the UIPT: in particular it requires precise information about the asymptotic behaviour of the partition function of large Ising triangulations, with an arbitrary fixed boundary condition, see Theorem~\ref{theo:combimain}. This result, which does not follow from earlier results \cite{BoulatovKazakov,BernardiBousquet,BMS}, constitutes a significant part of this work and is of independent interest. One of the main technical challenges to obtain this result is to solve an equation with two catalytic variables. This is done in Theorem~\ref{th:Aplus} using Tutte's invariants method, following the presentation of~\cite{BernardiBousquet}.

As expected, these partition functions all share the same asymptotic behaviour, which presents a phase transition for $\nu$ equal to $\nu_c := 1+\sqrt{7}/7$. This critical value already appeared in \cite{BoulatovKazakov,BernardiBousquet,BMS}, and we call \emph{critical IIPT} the corresponding limiting object.
The study of this critical IIPT is the main motivation for this work, since, as mentioned above, it is believed to belong to a different class of universality than the UIPT. However, these two models  share some common features, as illustrated by the following theorem:
\begin{theo}
\label{th:recurrence}
The simple random walk on the critical IIPT is almost surely recurrent.
\end{theo}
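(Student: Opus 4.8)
The plan is to follow the now-standard strategy for proving recurrence of random planar graphs via the Benjamini--Schramm / Gurel-Gurevich--Nachmias criterion. Recall that if a random rooted planar graph is the local (Benjamini--Schramm) limit of finite planar graphs and has almost surely bounded degrees, then it is almost surely recurrent; Gurel-Gurevich and Nachmias \cite{GGN} removed the bounded-degree hypothesis, requiring only that the degree of the root have an exponential tail. Since the critical IIPT is, by Theorem~\ref{th:localCV}, the local limit of the Boltzmann--Ising triangulations $\pn^{\nu_c}$, and triangulations are in particular planar, the whole matter reduces to controlling the tail of the root degree under $\plim^{\nu_c}$ and then forgetting the spins (a spanning subgraph of a recurrent graph is recurrent, so the simple random walk on the triangulation, with or without remembering the spin decoration, is what we want).

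First I would make precise that the underlying map of a $\nu_c$-IIPT is an infinite planar triangulation, one-ended by Theorem~\ref{th:localCV}, and record that recurrence of the simple random walk depends only on the graph structure, not on the spins. Second, I would invoke \cite{GGN}: it suffices to show $\plim^{\nu_c}(\deg(\rho) \geq k)$ decays exponentially in $k$, where $\rho$ is the root vertex. Third — and this is the technical heart — I would establish this tail bound. The natural route is to get a uniform-in-$n$ exponential tail for the root degree under $\pn^{\nu_c}$ and pass to the limit: since $\{\deg(\rho)\geq k\}$ is a local event, $\pn^{\nu_c}(\deg(\rho)\geq k) \to \plim^{\nu_c}(\deg(\rho)\geq k)$, so a bound uniform in $n$ transfers to the limit. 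To get the uniform bound, I would use a peeling/rerooting or direct combinatorial surgery argument: conditionally on the root having large degree, one can often ``slice off'' the faces incident to the root and re-glue, producing a near-bijection with triangulations of size $n - O(k)$ carrying a comparable Ising weight; comparing partition functions via the precise asymptotics of Theorem~\ref{theo:combimain} (which give $Z_n \sim C \, \rho^{-n} n^{-\alpha}$ type behavior with controlled sub-exponential corrections, and crucially the same $\rho$ for all boundary conditions) then yields a ratio that is exponentially small in $k$. The role of the energy weight $\nu_c^{m}$ is benign here: changing the edges incident to the root changes the number of monochromatic edges by at most $O(k)$, contributing only a bounded-per-unit factor $\nu_c^{\pm O(k)}$, which is absorbed into the exponential rate.

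The main obstacle I expect is precisely this last step: turning the partition-function asymptotics into a genuine root-degree tail bound. In the pure-gravity (UIPT) case this is classical and clean because Angel--Schramm's exact peeling computations give the law of the root degree in closed form; here the Ising weighting couples the local combinatorics around the root to the global spin configuration, so one cannot simply read off the degree law. One must argue that the boundary condition created by removing the root's incident faces is arbitrary but fixed, and then appeal to the uniformity over boundary conditions in Theorem~\ref{theo:combimain} — which is exactly why that theorem was proven in that generality. A secondary subtlety is ensuring the surgery is quantitatively a near-bijection (bounded-to-one in both directions) with multiplicities that are at most exponential in $k$; this is routine but must be done carefully so that the estimate survives. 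Once the exponential tail is in hand, the conclusion is immediate from \cite{GGN}.
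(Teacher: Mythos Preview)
Your high-level strategy is correct and matches the paper's: invoke Gurel-Gurevich--Nachmias, so the entire content is to prove that the root degree under $\plim^{\nu_c}$ has an exponential tail. But the substance of the argument lies entirely in that tail bound, and here your proposal has a genuine gap.

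You propose to obtain a uniform-in-$n$ exponential tail under $\pn^{\nu_c}$ via a surgery/rerooting argument, then pass to the limit. The paper explicitly avoids this route (see the remark after Lemma~\ref{lem:root-tight}): the authors state that obtaining such uniform bounds ``become[s] fairly technical in our setting'' and that they were unable to carry it through in general. Your surgery sketch (``slice off the faces incident to the root and re-glue'') does not actually produce exponential decay without further input: removing the $k$ incident faces exposes a boundary of length $k$ with some spin word $\omega$, and the comparison you'd write is essentially $\mathbb P(\deg(\rho)=k)\asymp t_\nu^k\cdot\sum_{|\omega|=k}\kappa_\omega\cdot(\text{local factors})$. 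Nothing in Theorem~\ref{theo:combimain} by itself controls the growth of $\sum_{|\omega|=k}\kappa_\omega$ in $k$; this requires a separate bivariate singularity analysis (the paper's Lemma~\ref{lem:growthCm}) to pin down the radius of convergence of $\sum_p\kappa_p y^p$, followed by a numerical comparison to show the relevant product is strictly less than one. Your sketch does not identify this step, and ``bounded-per-unit factors absorbed into the exponential rate'' is precisely the statement that needs proof.

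The paper's actual argument works directly on the IIPT, not on $\pn$. It decomposes the hull $\overline{B}_1(\mathbf T_\infty)$ into downward triangles plus slots filled with independent Boltzmann triangulations of the $2$-gon (Figure~\ref{fig:1hull}), so that $\deg(\rho)$ is a sum over the hull boundary of root degrees of Boltzmann pieces. Two independent exponential-tail estimates are then needed: (i) the hull perimeter $|\partial\overline B_1|$ has exponential tail (Proposition~\ref{prop:HullPerimTail}, using Lemma~\ref{lem:growthCm} and a numerical inequality $\max(Z_{\ps\ps},Z_{\ps\ns})/t_{\nu_c}<y_c$); and (ii) the root degree of a Boltzmann triangulation with arbitrary boundary has exponential tail uniformly in $\omega$ (Proposition~\ref{prop:BoltzDegTail}), proved by dominating a peeling exploration around the root by a five-type branching process whose mean matrix has spectral radius numerically checked to be $<1$ at $\nu_c$. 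Neither ingredient is visible in your outline, and both are where the real difficulty lies.
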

Our strategy to prove this result does not rely on the specificity of $\nu_c$, but requires a numerical estimate which prevents us from extending this result to a generic $\nu$. However, the same proof would work for any fixed $\nu$ between 0.3 and 2 (see Remark \ref{rem:nurec}) and we conjecture that the IIPT is recurrent for every value of $\nu$.

Finally, as a byproduct of the proof of Theorem~\ref{th:localCV}, we prove a spatial Markov property for the $\nu$-IIPTs (Proposition~\ref{prop:spatialIIPT}) and some of its consequences. We also provide a new tightness argument (see Lemma \ref{lem:root-tight}) that seems simple enough to be adapted to other models since it does not require explicit computations as was the case in previous works.

\subsubsection*{Connection with other works}
Our results should be compared to the recent preprint of Chen and Turunen~\cite{ChenTurunen} where they consider random triangulations with spins on their faces, at a critical parameter similar to our $\nu_c$ and with Dobrushin boundary conditions (i.e. with a boundary formed by two monochromatic arcs, similarly as in Figure~\ref{fig:tutteAS}). In the first part of their paper, the authors compute explicitly the partition function of this model by solving its Tutte's equation, obtaining a result comparable to Theorem \ref{th:Aplus}. While their proof also relies on the elimination of one of their two catalytic variables, it does not use Tutte's invariant like ours. However, as was explained to us by Chen, their algebraicity result and Theorem~\ref{th:Aplus} are equivalent and can be deduced from one another by a clever argument based on the relation between the Tutte polynomial of a
planar map and that of its dual.

In the second part of their paper, Chen and Turunen show that their model has a local limit in distribution when the two components of the Dobrushin boundary tend to infinity one after the other. The fact that they consider these particular boundary conditions allow them to make explicit computations on Boltzmann triangulations and to construct explicitly the local limit using the peeling process along an Ising interface. They also derive some properties of this interface.

\medskip

At the discrete level the Ising model is closely related via spin
cluster interfaces to the $O(n)$ model: this latter model has been studied
on triangulations or bipartite Boltzmann maps via a gasket decomposition approach 
in a series of papers \cite{BBGa,BBGc,BBGb,BBD,BuOn,CCM}, revealing a remarkable connection with the stable maps of \cite{LGM}. In particular this approach allows to identify a dense phase, a dilute phase and a generic phase for the loop configuration. We believe that our approach is suitable to study the geometry of the spin clusters of the Ising model and might shed some additional light on this connection with stable maps. We plan to return to this question soon in a sequel of the
present paper.

\medskip
Let us end this introduction by mentioning the conjectured links between models of decorated maps and Liouville Quantum Gravity (LQG), which is a one-parameter family of random measures on the sphere \cite{DS}. Physicists believe that most models of decorated maps converge to the LQG for an appropriate value of the parameter. In particular, the Ising model should converge to the $\sqrt{3}$-LQG. 

Such a convergence has been established in the case of ``pure quantum gravity'', corresponding to uniform planar maps and $\gamma=\sqrt{8/3}$, in the impressive series of papers by Miller and Sheffield~\cite{MSa,MSb,MSc}. Obtaining such a result for a model of decorated maps outside the pure-gravity class seems out of reach for the moment. However -- building on the so-called mating-of-trees approach initiated by Sheffield~\cite{She} and which has allowed to obtain various local convergence results for models of decorated maps (see e.g.\cite{C,BLR,GMSI19,GSII17,GSIII15}) -- Gwynne, Holden and Sun~\cite{GHS} managed to prove that for some models of decorated maps, including the spanning-tree decorated maps, bipolar oriented maps and Schnyder wood decorated maps, the volume growth of balls in their local limit is given by the ``fractal dimension'' $d_\gamma$, for the conjectured limiting $\gamma$-LQG (see also \cite{GHSSurvey} for a recent survey on this topic by the same authors).

The value of $d_{\gamma}$ is only known in the pure gravity case and $d_{\sqrt{8/3}}=4$. For other values of $\gamma$, only bounds are available. As of today, the best ones have been established by Ding and Gwynne in \cite{DG}. Except when $\gamma$ is close to 0, these bounds are compatible with Watabiki's famous prediction for $d_{\gamma}$~\cite{W}:
\[
	d_\gamma^{Wat} = 1 + \frac{\gamma^2}{4}+\frac{1}{4}\sqrt{(4+\gamma^2)^2+16\gamma^2}.
\]

As far as we understand, the Ising model does not fall into the scope of this mating-of-trees approach and so far, we are not able to derive information on the volume growth of balls in the IIPT. For $\gamma=\sqrt{3}$, Watabiki's prediction gives $d_{\sqrt{3}}^{Wat}=\dfrac{7+\sqrt{97}}{4}\approx 4.212$ and the bounds of Ding and Gwynne give:
\[
	4.189 \approx \frac{7+\sqrt{31}}{3}\leq d_{\sqrt{3}} \leq 3\sqrt{2}\approx 4.243.
\] 
If we believe in the connection between the critical IIPT and $\sqrt{3}-$LQG, this is a strong indication that its volume growth should be bigger than 4. We hope that the present work will provide material for the rigorous study of metric properties of two-dimensional quantum gravity coupled with matter.
\bigskip

\noindent \textbf{Acknowledgments:}
We thank Linxiao Chen and Jérémie Bouttier for insightful discussions, and for sharing the progress of~\cite{ChenTurunen} while the paper was still in preparation. We also warmly thank Mireille Bousquet-Mélou, who patiently answered all our questions about Tutte's invariant method and even shared some of her tricks.

We also wish to thank an anonymous referee, whose many comments improved significantly this article.

This work was supported by the grant ANR-14-CE25-0014 (ANR GRAAL), the grant ANR-16-CE40-0009-01 (ANR GATO) and the Labex MME-DII (ANR11-LBX-0023-01).

\tableofcontents

\section{Enumerative results}

\subsection{Triangulations with spins: definitions and generating series}\label{sub:def}
A \emph{planar map} is the embedding of a planar graph in the sphere considered up to sphere homeomorphisms preserving its orientation. Maps 
 are \emph{rooted}, meaning that one edge is distinguished and oriented. This edge is called the \emph{root edge}, its tail the \emph{root vertex} and the face on its right the \emph{root face}. A \emph{triangulation} is a planar map in which all the faces have degree 3. Note that loops and multiple edges are allowed, so that, in the terminology of Angel and Schramm \cite{AngelSchramm}, we consider \emph{type I-triangulations}.

More generally, a \emph{triangulation with a boundary} is a planar map in which all faces have degree $3$ except for the root face (whose boundary may not be simple) and 
a \emph{triangulation of the $p$-gon} is a triangulation whose root face is bounded by a simple cycle of $p$ edges. Occasionally, we will also consider \emph{triangulations with holes}, which are planar maps such that every face has degree $3$, except for a given number of special faces enclosed by simple paths that will be called holes.
The \emph{size} of a planar map $M$ is its number of edges and is denoted by $|M|$. 

\bigskip

The maps we consider are always endowed with a \emph{spin configuration}: a given map $M$ comes with an application $\sigma$ from the set $V(M)$ of its vertices to the set $\{\ps,\ns\}$. An edge $\{u,v\}$ of $M$ is called \emph{monochromatic} if $\sigma(u)=\sigma(v)$ and \emph{frustrated} otherwise. The number of monochromatic edges of $M$ is denoted by $m(M)$. 

Let $p$ be a fixed positive integer and $\omega=\omega_1\cdots \omega_p$ be a word of length $p$ on the alphabet $\{\ps,\ns\}$. The set of triangulations of a $p$-gon of size $n$ is denoted by $\trw[n]{p}$ (boundary edges are counted). Likewise, the set of finite triangulations of the $p$-gon is denoted by $\kT_f^p$. Moreover, we write $\kT^{\omega}_f$ for the subset of $\kT^p_f$ consisting of all triangulations of the $p$-gon endowed with a spin configuration such that the word on $\{\ps,\ns\}$ obtained by listing the spins of the vertices incident to the root face, starting with the target of the root edge, is equal to $\omega$ (see Figure~\ref{fig:boundary}).
\begin{figure}[t]
\begin{center}
\includegraphics[width=\textwidth]{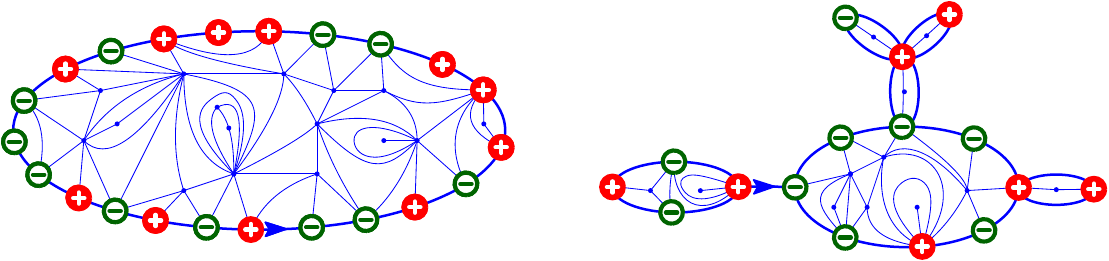}
\caption{\label{fig:boundary}Two triangulations with boundary condition $\omega = \ns \ns \ps \ns \ps \ps \ps \ns \ns \ps \ps \ps \ns \ps \ns \ns \ns \ps \ns \ps \ns \ps$. The one on the left has a simple boundary whereas the one on the right has not.}
\end{center}
\end{figure}

\medskip 

We now introduce the generating series that will play a central role in this paper and are the subject of our main algebraicity theorem.
For any positive integer $p$, the generating series of triangulations of a $p$-gon endowed with an Ising model with parameter $\nu$ is defined as: 
\[
Z_p(\nu,t) = \sum_{T \in \kT_f^p}t^{|T|}\nu^{m(T)}.
\]
For every fixed word $\omega \in \{\ps,\ns\}^p$, we also set
\[
\quad Z_{\omega}(\nu,t) = \sum_{T \in \kT_f^{\omega}}t^{|T|}\nu^{m(T)}.
\]
In particular, the generating series of triangulations of a $p$-gon with positive boundary conditions is given by: 
\[
\quad Z_{\ps^p}(\nu,t) = \sum_{T\in \kT_f^{\ps^p}}t^{|T|}\nu^{m(T)}
\]
where $\ps^p$ denotes the word made of $p$ times the letter $\ps$.

\bigskip

To normalize the probability $\mathbb P_n^\nu$ defined by \eqref{eq:defPn} in the introduction, we consider $\mathcal Z (\nu,t)$ the generating series of the triangulations of the sphere. It is linked to the generating series of the 1-gon and of the 2-gon by the following relation:
\[
\mathcal Z (\nu,t) =  \sum_{(T,\sigma) \in \mathcal T_f} \nu^{m(T,\sigma)} t^{|T|} = \frac{2}{t} \left( \frac{Z_{\ps\ps} (\nu,t)}{\nu} +  Z_{\ps\ns}(\nu,t) + \frac{Z_{\ps}^2(\nu,t)}{\nu}  \right).
\]
Indeed, if the root edge of a triangulation of the sphere is not a loop, by opening it we obtain a triangulation of the $2$-gon giving the first two terms in the sum (we divide $Z_{\ps\ps}$ by $\nu$ in order to count the root edge as a monochromatic edge only once). On the other hand, if the root edge is a loop, we can decompose the triangulation into a pair of triangulations of the $1$-gon giving the last term in the sum. In both cases, the factor $2/t$ is here to count the root edge only once and to take into account the fact that the root vertex can have spin $\ns$ (obviously $Z_\ps = Z_\ns$, $Z_{\ps\ps} = Z_{\ns\ns}$ and $Z_{\ps \ns} = Z_{\ns\ps}$), see Figure~\ref{fig:RootTransform}.
\begin{figure}[!ht]
\begin{center}
\includegraphics[width=0.9\textwidth]{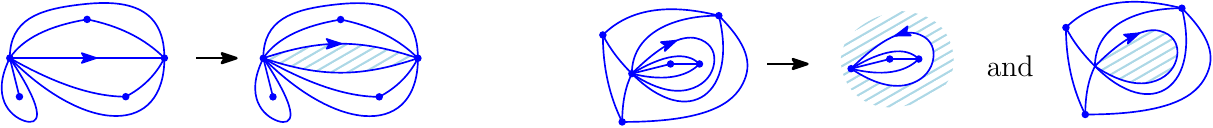}
\caption{\label{fig:RootTransform}How to transform a triangulation of the sphere into a triangulation of the 2-gon (left) or into two triangulations of the 1-gon (right). The $p$-gon is shaded.}
\end{center}
\end{figure}
\medskip

\subsection{Definition of Ising-algebraicity and main algebraicity result}\label{sub:IsingAlgebraic}
Since the number of edges of a triangulation of a $p$-gon is congruent to $-p$ modulo 3 (each triangular face has $3$ half edges), the series $t^pZ_p$ (or $t^pZ_\omega$ if $\omega$ has length $p$) can also be seen as series in the variable $t^3$ that counts the vertices of the triangulation (minus $1$, this is a direct consequence of Euler's formula). The different generating series introduced in the previous section all share common features. In particular, they all have the same radius of convergence. This will be proven later but, since we need the value of this common radius of convergence to state our results, let us define it now. This quantity $\rho_\nu$ satisfies: 
\begin{align*}
  P_2(\nu, \rho_\nu)&= 0 \hbox{ for } 0< \nu \le
  \nu_c:=1+ 1/\sqrt 7,\\
P_1(\nu, \rho_\nu)&=0 \hbox{ for } \nu_c \le \nu,
\end{align*}
where $P_1$ and $P_2$ are the following two polynomials:
\begin{align}
 P_1(\nu,\rho) &= 
 131072\,{\rho}^{3}{\nu}^{9}-192\,{\nu}^{6} \left( 3\,\nu+5 \right) 
 \left( \nu-1 \right)  \left( 3\,\nu-11 \right) {\rho}^{2}
\notag\\
& \quad \quad-48\,{\nu}^{3} \left( \nu-1 \right) ^{2}\rho+ \left( \nu-1 \right)  \left( 4\,{\nu
}^{2}-8\,\nu-23 \right),\label{eq:P1}\\
P_2(\nu,\rho) & = 
27648\,{\rho}^{2}{\nu}^{4}+864\,\nu\, \left( \nu-1 \right)  \left( {
\nu}^{2}-2\,\nu-1 \right) \rho+ \left( 7\,{\nu}^{2}-14\,\nu-9 \right) 
 \left( \nu-2 \right) ^{2}.\label{eq:P2}
\end{align}

\begin{figure}[!ht]
\begin{center}
\includegraphics[width=0.9\textwidth]{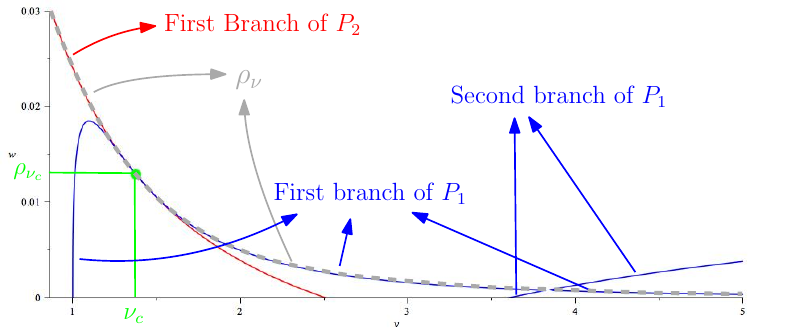}
\caption{\label{fig:rho}Positive branches of the polynomials $P_1$ and $P_2$ and definition of $\rho_\nu$ (represented in fat dashed grey edges).}
\end{center}
\end{figure}

To properly define $\rho_\nu$ as a function of $\nu$, we have to specify which branch of $P_1$ or of $P_2$ to consider. The situation, which is illustrated in Figure \ref{fig:rho} and detailed in the proof of Proposition \ref{prop:asyQ1}, is the following. The polynomial $P_2$ has real roots only for $\nu \in (0,3]$. Of its two branches, only one takes positive values. This branch will be called the first branch of $P_2$, it is given by
\[
w_2 (\nu) = \frac{(1+\nu)(3-\nu) \sqrt{3(1+\nu)(3-\nu)} - 9 (\nu-1)(\nu^2-2\nu -1)}{576 \nu^3},
\]
where $\sqrt{\cdot}$ denotes the principal value of the square root. As a function of $\nu$,  it is decreasing, continuous and positive for $\nu \in (0,\nu_c]$.

The polynomial $P_1$ has a unique real root for $\nu<3$ and three real roots for $\nu \geq3$. Among these real roots, two can take positive values for $\nu > 0$ and meet at $\nu = 1 + 2 \sqrt{2}$. We define $w_1(\nu)$ as the only real root of $P_1$ for $\nu < 3$, its larger positive root for $\nu \in [3,1+2\sqrt{2}]$, and its smaller positive root for $\nu \geq 1+2\sqrt 2$. This defines a branch of $P_1$ that we call the first branch of $P_1$. As a function of $\nu$, it is decreasing, continuous and positive for $\nu \geq \nu_c$. In addition we have $w_1(\nu_c) = w_2(\nu_c)$.

The function $\rho_\nu$ is then defined as follows:
\begin{defi}\label{def:rho}
Set $\nu_c = 1 + 1/\sqrt 7$.
Let $w_2$ be the unique branch of $P_2$ that is positive for $\nu \in (0,\nu_c]$ and $w_1$ be the unique branch of $P_1$ that is positive decreasing for $\nu \geq \nu_c$.
For every $\nu > 0$ we set:
\[
\rho_\nu =
\begin{cases}
w_2(\nu) & \text{ for $\nu \in (0,\nu_c]$,}\\
w_1(\nu) & \text{ for $\nu > \nu_c$.}
\end{cases}
\]
This defines a continuous and decreasing bijection from $(0,+\infty)$ onto $(0,+ \infty)$ (see Figure \ref{fig:rho} for an illustration). The value at $\nu_c$ is
\[
\rho_{\nu_c}
= \frac{25 \,\sqrt{7}-55}{864}.
\]
\end{defi}
The following property is going to be ubiquitous in the rest of the paper:
\begin{framed}
\begin{defi}
A generating series $S(\nu,t)$ is said to be \emph{Ising-algebraic} (with parameters $A$, $B$ and $C$), if the following conditions hold.
\begin{enumerate}
\item For any positive value of $\nu$,  the generating series $S$, seen as a series in $t^3$, is algebraic and $\rho_\nu = (t_{\nu})^3$ is its unique dominant singularity.

\item The series $S$ satisfies the following singular behaviour: there exist non-zero constants $A (\nu)$, $B (\nu)$ and $C (\nu)$ such that:
\begin{itemize}
\item For $\nu \neq \nu_c$, the critical behaviour of $S(\nu,t)$ is  the standard behaviour of planar maps series, with an exponent $3/2$.
$$
S(\nu,t)=A(\nu) +B(\nu) \cdot \big(1-(t/t_\nu)^3\big)+C(\nu) \cdot \big(1-(t/t_\nu\big)^3)^{3/2}+o\Big(\big(1-(t/t_\nu)^3\big)^{3/2}\Big).
$$
\item But, at $\nu=\nu_c$, the nature of the singularity changes and:
$$
S(\nu_c,t)=A(\nu_c) +B(\nu_c) \cdot \big(1-(t/t_{\nu_c})^3\big)+C(\nu_c) \cdot \big(1-(t/t_{\nu_c})^3\big)^{4/3}+o\Big(\big(1-(t/t_{\nu_c})^3\big)^{4/3}\Big).
$$
\end{itemize}
\end{enumerate}
\end{defi}
\end{framed}
Ising-algebraic series all share the same asymptotic behaviour:
\begin{prop}\label{prop:dvIsing}
If $S(\nu,t)$ is Ising-algebraic with parameters $A$, $B$ and $C$, then for any $\nu>0$, we have, as $n \to \infty$: 
\begin{equation}\label{eq:dvIsing}
[t^{3n}]S(\nu,t)\sim
\begin{cases}
k(\nu) \cdot t_\nu^{-3n} \, n^{-5/2} &\text{ if }\nu\neq \nu_c,\\
k(\nu_c) \cdot t_{\nu_c}^{-3n} \, n^{-7/3} &\text{ if }\nu=\nu_c,
\end{cases}
\end{equation}
where 
\[
	k(\nu)=\frac{3C(\nu)}{4\sqrt{\pi}}\text{  if }\nu\neq \nu_c\quad \text{and}\quad k (\nu_c)=\dfrac{C(\nu_c)}{\Gamma(-4/3)}.
\]

\end{prop}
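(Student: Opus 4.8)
The plan is to obtain the statement as a direct consequence of the transfer theorems of singularity analysis of Flajolet and Odlyzko, applied to $S$ regarded as a function of the single variable $z=t^{3}$. Write $F(z)=S(\nu,t)$; by the remark preceding the definition this is a well-defined power series in $z$. Set $\alpha=3/2$ if $\nu\neq\nu_{c}$ and $\alpha=4/3$ if $\nu=\nu_{c}$. The only analytic ingredients needed beyond the definition of Ising-algebraicity are the classical coefficient estimate
\[
[z^{n}]\,(1-z/\rho)^{\alpha}\;=\;\frac{\rho^{-n}\,n^{-\alpha-1}}{\Gamma(-\alpha)}\bigl(1+O(1/n)\bigr),
\]
valid since $\alpha\notin\Z_{\geq 0}$, together with the transfer principle converting an $O(\cdot)$ bound on a $\Delta$-analytic function near its unique dominant singularity into the corresponding bound on its coefficients.

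The first step is to check that $F$ falls within the scope of singularity analysis. As $F$ is algebraic (condition~1 of the definition), it has finitely many singularities in $\C$, each of them a branch point or a pole, and at each such point its Puiseux expansion converges in a slit neighbourhood and involves no logarithmic terms. Since $\rho_{\nu}$ is its unique dominant singularity, $\rho_{\nu}$ is the radius of convergence of $F$ and $F$ continues analytically to a domain of the form $\{|z|<\rho_{\nu}+\eta\}\setminus[\rho_{\nu},\rho_{\nu}+\eta)$, in particular to a $\Delta$-domain at $\rho_{\nu}$. The expansion of condition~2 is, after the substitution $z=t^{3}$, the beginning of the convergent Puiseux expansion of $F$ at $\rho_{\nu}$; in particular the stated exponents $0,1,\alpha$ are the only ones below $\alpha$ with nonzero coefficient, and the remainder is in fact $O\bigl((1-z/\rho_{\nu})^{\alpha+\delta}\bigr)$ for some $\delta>0$, not merely as $t\to t_{\nu}^{-}$ along the reals but uniformly on the $\Delta$-domain.

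The second step is to transfer term by term. The contributions $A(\nu)$ and $B(\nu)(1-z/\rho_{\nu})$ are polynomials in $z$ and hence do not affect $[z^{n}]F$ for $n\geq 2$. The remainder $O\bigl((1-z/\rho_{\nu})^{\alpha+\delta}\bigr)$ contributes $O\bigl(\rho_{\nu}^{-n}n^{-\alpha-1-\delta}\bigr)=o\bigl(\rho_{\nu}^{-n}n^{-\alpha-1}\bigr)$ to the coefficients. Applying the displayed estimate to $C(\nu)(1-z/\rho_{\nu})^{\alpha}$, recalling $\rho_{\nu}=t_{\nu}^{3}$ so that $[t^{3n}]S(\nu,t)=[z^{n}]F(z)$, and using that $C(\nu)\neq 0$, we get
\[
[t^{3n}]\,S(\nu,t)\;\sim\;\frac{C(\nu)}{\Gamma(-\alpha)}\;t_{\nu}^{-3n}\;n^{-\alpha-1}.
\]
For $\nu\neq\nu_{c}$ this is $\tfrac{3C(\nu)}{4\sqrt{\pi}}\,t_{\nu}^{-3n}\,n^{-5/2}$, since $\alpha+1=5/2$ and $\Gamma(-3/2)=\tfrac{4\sqrt{\pi}}{3}$; for $\nu=\nu_{c}$ it is $\tfrac{C(\nu_{c})}{\Gamma(-4/3)}\,t_{\nu_{c}}^{-3n}\,n^{-7/3}$, since $\alpha+1=7/3$. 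This is exactly the claim.

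There is essentially no obstacle here: the proposition is a bookkeeping consequence of singularity analysis. The one point deserving a little care, rather than being a genuine difficulty, is the upgrade of the $o(\cdot)$ remainder of condition~2 — which a priori is information only along the real axis — to a $\Delta$-analytic $O\bigl((1-z/\rho_{\nu})^{\alpha+\delta}\bigr)$; this is precisely where algebraicity of $S$ is used, through the convergence and log-freeness of the Puiseux expansion of an algebraic function at a branch point.
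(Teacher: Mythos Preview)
Your proof is correct and takes essentially the same approach as the paper, which simply cites the general transfer theorem \cite[Thm~VI.3, p.~390]{FS} as a direct consequence. You have merely spelled out the details of that application, including the verification that algebraicity provides the required $\Delta$-analyticity and the computation $\Gamma(-3/2)=4\sqrt{\pi}/3$.
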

\begin{proof}
This is a direct consequence of the general transfer theorem \cite[Thm VI.3, p.390]{FS}. 
\end{proof}
Our main algebraicity result is the following theorem:
\begin{theo}
\label{theo:combimain}
Denote by $\{\ps,\ns\}^+$ the set of finite nonempty words on the letters $\ps$ and $\ns$.
For any $\omega \in \{\ps,\ns\}^+$, the series $t^{|\omega|}Z_{\omega}(\nu,t)$ is Ising-algebraic with some parameters $A_\omega$, $B_\omega$ and $C_\omega$.
In particular, for any $\nu>0$, there exists $\kappa_\omega(\nu)\in \mathbb{R}_{>0}$ such that, as $n \to \infty$,
\begin{equation}\label{def:kappaomega}
[t^{3n-|\omega|}]Z_{\omega}(\nu,t)\sim
\begin{cases}
\kappa_{\omega}(\nu) \cdot t_\nu^{-3n} \, n^{-5/2} &\text{ if }\nu\neq \nu_c,\\
\kappa_{\omega}(\nu_c) \cdot t_{\nu_c}^{-3n} \, n^{-7/3} &\text{ if }\nu=\nu_c.
\end{cases}
\end{equation}\label{def:kappasphere}
Similarly, for triangulations of the sphere, we have as $n \to \infty$: 
\begin{equation}
[t^{3n}]\mathcal Z (\nu, t)\sim
\begin{cases}
\kappa(\nu) \cdot t_\nu^{-3n} \, n^{-5/2} &\text{ if }\nu\neq \nu_c,\\
\kappa(\nu_c) \cdot t_{\nu_c}^{-3n} \, n^{-7/3} &\text{ if }\nu=\nu_c,
\end{cases}
\end{equation}
with
\[
\kappa(\nu) = \frac{2} {t_\nu^3} \left(\frac{\kappa_{\ps\ps}}{\nu} + \kappa_{\ps\ns} + \frac{\kappa_\ps \, Z_\ps (t_\nu)}{\nu}\right).
\]
\end{theo}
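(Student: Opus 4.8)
The plan is to reduce the general boundary word $\omega$ to a small number of "base" cases and then bootstrap. First I would establish the result for the three elementary series $t Z_\ps$, $t^2 Z_{\ps\ps}$ and $t^2 Z_{\ps\ns}$ (equivalently $Z_1$ and the two boundary-$2$-gon series): these are governed by the equation with two catalytic variables whose resolution is the content of Theorem~\ref{th:Aplus}, obtained via Tutte's invariants. From the explicit algebraic expressions produced there, one reads off that each of these series is algebraic in $t^3$, and a direct singularity analysis of the algebraic equation yields the claimed dominant singularity $\rho_\nu=t_\nu^3$ cut out by $P_2$ for $\nu\le\nu_c$ and by $P_1$ for $\nu\ge\nu_c$, together with the stated value of $\rho_{\nu_c}$ and the behaviour $\rho_\nu\to+\infty$ as $\nu\to 0$; the switch of exponent from $3/2$ to $4/3$ at $\nu=\nu_c$ comes from the confluence of branch points of the algebraic curve exactly when the discriminant acquires an extra zero, which happens at the common root of $P_1$ and $P_2$. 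This establishes that $t Z_\ps$, $t^2 Z_{\ps\ps}$, $t^2 Z_{\ps\ns}$ are Ising-algebraic.

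Next I would treat an arbitrary word $\omega$ by a peeling/decomposition argument at the root face. Removing the face incident to the root edge of a triangulation of the $p$-gon with boundary word $\omega$ either reveals a new internal vertex (whose spin is summed over $\{\ps,\ns\}$), giving a triangulation of the $(p+1)$-gon with a boundary word obtained from $\omega$ by inserting one letter, or identifies the apex of the root face with a boundary vertex, splitting the map into two triangulations with shorter boundary words (along a cycle) or into a triangulation with a boundary word of length $p-1$. This yields a finite linear system expressing $t^{|\omega|}Z_\omega$ as a polynomial (with coefficients polynomial in $t$, $\nu$, $1/\nu$) in the series $t^{|\omega'|}Z_{\omega'}$ for words $\omega'$ that are either strictly shorter or have the same length but are "closer to monochromatic" in an appropriate well-founded order. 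Since algebraicity over $\Q(\nu,t^3)$ is preserved by the field operations, induction on this order gives that every $t^{|\omega|}Z_\omega$ is algebraic in $t^3$. One must also check that the $Z_\omega$ are not identically the trivial polynomial part, i.e.\ that the constant $C_\omega(\nu)$ is nonzero; this follows because the subleading singular term is transmitted multiplicatively/additively through the decomposition and is nonzero already for the base cases, the key point being that no cancellation occurs since all the combinatorial coefficients in the linear system are nonnegative.

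For the singular behaviour, since all the series in the system share the same dominant singularity $\rho_\nu$ (a single boundary vertex already forces a macroscopic triangulation to be attached, so $Z_\omega$ inherits $\rho_\nu$ and cannot have a smaller one), I would plug the square-root-type (resp. $4/3$-type) Puiseux expansions of the base series into the linear system and propagate: the leading constant and the analytic $(1-(t/t_\nu)^3)$ term combine polynomially, and the singular term $C_\omega(\nu)\cdot(1-(t/t_\nu)^3)^{3/2}$ (resp. exponent $4/3$) is obtained to first order by linearising, with $C_\omega(\nu)$ a nonzero rational combination of the $C$'s of the base cases and the analytic parts of the other series; nonvanishing is again guaranteed by positivity of coefficients, as noted above. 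Once each $t^{|\omega|}Z_\omega$ is shown Ising-algebraic, Proposition~\ref{prop:dvIsing} immediately delivers the asymptotics \eqref{def:kappaomega} with $\kappa_\omega=k_\omega$ in the notation there. Finally, the asymptotics for $[t^{3n}]\mathcal Z$ follow by substituting \eqref{def:kappaomega} for $\omega\in\{\ps\ps,\ps\ns,\ps\}$ into the identity
\[
\mathcal Z(\nu,t)=\frac{2}{t}\left(\frac{Z_{\ps\ps}(\nu,t)}{\nu}+Z_{\ps\ns}(\nu,t)+\frac{Z_\ps^2(\nu,t)}{\nu}\right),
\]
using that a product of an Ising-algebraic series with an analytic function (here $Z_\ps(t_\nu)$ times the singular part of $Z_\ps$) contributes at leading order the value of the analytic factor at $t_\nu$, and that the three contributions add without cancellation (all $\kappa$'s have the same sign), which yields the stated formula for $\kappa(\nu)$.

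The main obstacle. The genuinely hard step is the resolution of the two-catalytic-variable functional equation underlying the base cases, i.e.\ Theorem~\ref{th:Aplus}; granting that, the remaining difficulty is bookkeeping, namely setting up the decomposition at the root face so that it terminates (choosing the right well-founded order on boundary words) and, more delicately, certifying that the constants $C_\omega(\nu)$ never vanish — this requires a careful positivity argument through the linear system rather than a blind algebraic manipulation, since an algebraic series that is Ising-algebraic except for $C_\omega\equiv 0$ would ruin the uniform $n^{-5/2}$ (resp. $n^{-7/3}$) asymptotics.
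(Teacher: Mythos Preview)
Your overall architecture matches the paper's, but there is a genuine gap in the induction.

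\textbf{Insufficient base cases.} You propose $Z_\ps$, $Z_{\ps\ps}$, $Z_{\ps\ns}$ as base cases and then a peeling recursion on a ``shorter or closer to monochromatic'' well-founded order. But the root-face peeling (Proposition~\ref{prop:PeelSimple}) expresses $Z_{ab\omega}$ in terms of $Z_{a\ps b\omega}$, $Z_{a\ns b\omega}$ and products of \emph{shorter} series: the boundary length goes \emph{up} by one. Rearranging this to solve for $Z_{a\ns b\omega}$ does give you a word of the same length with one fewer $\ominus$, plus shorter data, so the induction on the number of $\ominus$'s is fine \emph{provided} the word contains at least one $\ominus$. When $\omega=\ps^p$ there is nothing to rearrange and the equation still involves $Z_{\ps^{p+1}}$; you cannot reach $Z_{\ps^p}$ for $p\geq 3$ from your three base series by this recursion alone. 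This is exactly why the paper first proves that \emph{every} $Z_{\ps^p}$ is Ising-algebraic (Proposition~\ref{prop:expRatM} and Corollary~\ref{co:ZplusSingular}), by extracting from Theorem~\ref{th:Aplus} a rational parametrisation $t^{3p}\cdot t^pZ_{\ps^p}=R_p^\nu(U)/(1-2U)^{3p}$ in terms of the series $U$ of Theorem~\ref{th:gsBoundary}. Only then does the double induction on $|\omega|$ and on the number of $\ominus$'s go through.

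\textbf{Nonvanishing of $C_\omega$.} Your positivity argument (``all combinatorial coefficients in the linear system are nonnegative, so no cancellation'') fails for precisely the same reason: once you invert the peeling to isolate $Z_{a\ns b\omega}$, a minus sign appears in front of $Z_{a\ps b\omega}$, so you cannot rule out cancellation of the singular part by sign considerations. The paper avoids this by the combinatorial sandwich of Lemma~\ref{le:combi}: for any boundary property $\mathcal P$, explicit injections into and from $\kQ^{1}$ give $k_p[t^{3n}]tQ_1\leq [t^{3n}]t^pQ_p^{\mathcal P}\leq \tilde k_p[t^{3n}]tQ_1$, which together with algebraicity forces the singular exponent to be $3/2$ (resp.\ $4/3$) with nonzero $C_\omega$. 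You should invoke this pinching argument rather than positivity of the recursion.

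With these two fixes --- taking all $Z_{\ps^p}$ as base cases via the rational parametrisation in $U$, and replacing the positivity claim by the pinching lemma --- your outline coincides with the paper's proof. Your treatment of $\mathcal Z$ at the end is correct.
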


\bigskip

\noindent{\bf Main steps of the proof of Theorem~\ref{theo:combimain}}\\
The rest of this section is devoted to the proof of Theorem~\ref{theo:combimain}. First we recall in Section~\ref{sub:nonsimple} the result of Bernardi and Bousquet-Mélou~\cite{BernardiBousquet} about triangulations with a (non-simple) boundary of size 1 or 3. We then show how a squeeze lemma-type argument allows to extend their result to various models of triangulations provided that algebraicity is proved. Then, the main piece of work is to prove that the generating series of triangulations of a $p$-gon with positive boundary conditions are algebraic, see Section~\ref{sub:plus}. Finally, a double induction on the length of the boundary and on the number of $\ps$ on the boundary allows to conclude the proof, see Section~\ref{sub:anycondition}. 

\subsection{Enumerative results for triangulations with a non simple boundary}
\label{sub:nonsimple}
\subsubsection{Generating series of triangulations with a boundary, following \cite{BernardiBousquet}}
Let $\kQ$ denote the set of triangulations with a boundary (not
necessarily simple), and $\kQ^{p}$ denote the subset of these triangulations
with boundary length equal to $p$. Following~\cite{BernardiBousquet}, we define:
\[
\gnstr[p](\nu,t) = \frac{1}{2}\sum_{M\in \bns{p}}t^{|M|} \nu^{m(M)}, 
\]
and let 
\[
\gnstr(y)=\sum_{p\geq1}y^p\gnstr[p](\nu,t).
\]
Explicit expressions for $Q_1$ and $Q_3$ have been established by Bernardi and Bousquet-Mélou:
\begin{theo}[Theorem 23 of~\cite{BernardiBousquet}]\label{th:gsBoundary}
Define $U\equiv U(\nu,t)$ as the unique formal power series in $t^3$ having constant term $0$ and satisfying
\begin{equation} \label{eq:t3U}
t^3= U \frac{\Big((1+\nu)U-2 \Big) \Big( 8\nu (\nu+1)^2 U^3 -(11 \nu + 13)(\nu +1) U^2 +2 (\nu +3)(2\nu+1)U -4 \nu \Big)}{32\nu^3 (1-2U)^2}.
\end{equation}
Then, there exist explicit polynomials $\widetilde{R_1}$ and $\widetilde{R_{3}}$ such that:
\begin{equation}\label{eq:expQ1Q3}
t^3\cdot tQ_1 = \nu t^3\cdot t^2Q_2= \frac{\widetilde{R_1}(U,\nu)}{\nu^4 (1-2U)^2}\quad \text{ and }\quad
t^3\cdot t^3Q_3= \frac{\widetilde{R_{3}}(U,\nu)}{\nu^6 (1-2U)^4}.
\end{equation}
\end{theo}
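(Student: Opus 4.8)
\noindent\emph{Proof strategy (the argument of \cite{BernardiBousquet}, specialised to Ising).}
The plan is to produce a polynomial functional equation for a generating function of triangulations with a (non-simple) boundary carrying a single catalytic variable recording the boundary length, and then to solve it by the quadratic method — equivalently, by the method of Bousquet-M\'elou--Jehanne, or by Tutte's method of invariants, for polynomial equations with one catalytic variable. This is exactly the route of \cite{BernardiBousquet} for the $q$-state Potts model on triangulations, here taken at $q=2$.

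First I would set up the recursion. Introduce $Q(y)=\sum_{p\geq 1}y^p\, Q_p(\nu,t)$ and decompose a triangulation with a boundary of size $p$ according to the triangle incident to the root edge. If the third vertex of that triangle is a fresh internal vertex, removing the triangle produces a triangulation with a boundary of size $p+1$, together with a choice of spin for the new vertex and the corresponding monochromatic weights for the two new edges; if instead the third vertex already lies on the boundary, removing the triangle splits the map into two triangulations with boundaries whose sizes add up to $p+1$. Summing these contributions against $y^p$, and treating separately the degenerate small cases (boundaries of size $1$ and $2$, which cannot be peeled generically), yields an identity
\[
\Phi\big(Q(y),\, Q_1,\, Q_2,\, y,\, \nu,\, t\big)=0,
\]
polynomial in its arguments and of degree two in $Q(y)$; the unknown series $Q_1,Q_2$ (hence also $Q_3=[y^3]Q(y)$ once $Q(y)$ is known) appear precisely as these boundary corrections. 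The only place where the spin weights could force a second catalytic variable is handled by the non-simplicity of the boundary, which keeps the bookkeeping one-dimensional.

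Next, apply the quadratic method: complete the square to rewrite the equation as $\big(L(y)\big)^2=\Delta(y)$ with $L$ linear in $Q(y)$ and $\Delta\in\Q(\nu,t)[Q_1,Q_2][y]$. Since $Q(y)$ is a power series in $t$ with polynomial-in-$y$ coefficients, there is a unique power series $Y\equiv Y(\nu,t)$ cancelling $L$, and at $y=Y$ both $\Delta(Y)=0$ and $\Delta'(Y)=0$. Together with the defining relation of $Y$, these form a system in $Y,Q_1,Q_2$; eliminating $Y$ produces polynomial equations over $\Q(\nu,t)$ satisfied by $Q_1$ and by $Q_2$, which already establishes their algebraicity, and with it that of $Q_3$. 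That the series $U$ of the statement is well defined is then a routine check: the right-hand side of \eqref{eq:t3U} equals $U/(4\nu^2)+O(U^2)$, so the analytic implicit function theorem gives a unique power series $U$ in $t^3$ with zero constant term solving it.

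Finally, to reach the stated closed forms I would look for a rational uniformisation of the curve obtained above: introduce $U$, verify that the substitution makes $Y$ rational in $U$ and the discriminant a perfect square, and read off from the elimination both the cubic relation \eqref{eq:t3U} for $t^3$ in terms of $U$ and the rational expressions for $t^4Q_1$ and $t^6Q_3$ with denominators $\nu^4(1-2U)^2$ and $\nu^6(1-2U)^4$. The identity $Q_1=\nu t\,Q_2$ falls out of the same computation, and also has a one-line bijective proof: gluing a monochromatic loop at the root vertex of a boundary $2$-gon turns it into a boundary $1$-gon, adding exactly one edge and one monochromatic edge. I expect this last step — guessing the correct parameter $U$ and carrying out the elimination, the intermediate polynomials being of high degree — to be the main obstacle, and it is precisely where the invariant-theoretic machinery for coloured triangulations of \cite{Tutte,BernardiBousquet} does the decisive work.
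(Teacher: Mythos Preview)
First, observe that the paper does not itself prove this theorem: it is quoted from \cite{BernardiBousquet} and used as a black box, so there is no proof in the present paper to compare against. What follows is therefore an assessment of your reconstruction.

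Your sketch has a genuine gap at the very first step. You assert that the root-triangle decomposition yields a single polynomial equation in one catalytic variable $y$, quadratic in $Q(y)$. That is what happens for \emph{uncoloured} triangulations; for the Ising-weighted series it does not close. When you delete the triangle to the left of the root edge, the weight you must strip off depends on the spins of \emph{both} endpoints of that edge (and of the third vertex), but $Q(y)$ only records the boundary length and not the spin of the target of the root edge. The recursion therefore cannot be expressed in terms of $Q(y)$ alone. Non-simplicity of the boundary is irrelevant to this; what forces extra structure is the spin bookkeeping, and it does so just as much for non-simple as for simple boundaries. In \cite{BernardiBousquet} the functional equation for the Potts-weighted near-triangulation series genuinely involves \emph{two} catalytic variables, and one of them is eliminated by Tutte's invariants --- exactly the mechanism the present paper replays, for simple positive boundary, in the proof of Theorem~\ref{th:Aplus} (where the resulting one-catalytic equation \eqref{eq:eqZ+} is cubic in $Z^+(y)$, not quadratic). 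Your last paragraph invokes the invariant machinery only for guessing the uniformiser $U$, but the decisive use of invariants is earlier, in obtaining a one-catalytic equation at all. The bijective reading of $Q_1=\nu t\,Q_2$ is correct, and the uniqueness of the power series $U$ in \eqref{eq:t3U} is indeed routine.
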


\begin{rema}
Theorem 23 of \cite{BernardiBousquet} gives a different parametrization than the one given in Theorem \ref{th:gsBoundary}. The two are linked with the simple change of variables (which already appears in~\cite{BernardiBousquet}):
\[
U = \frac{1}{2} \left( 1 + \frac{1-\nu}{1+\nu} S\right).
\]
With this change of variables, the values of $\widetilde{R_1}$ and $\widetilde{R_3}$ are given by
\begin{align*}
\widetilde{R_1} (U,\nu) &= \frac{\nu+1}{64} \, U^2 \, (U(\nu+1) - 2) \, \left( 6 (\nu +1)^2 U^3 - 8 (\nu+1)^2 U^2 + (3\nu+4)(\nu+1) U -2 \nu \right),\\
\widetilde{R_3} (U,\nu) &=\frac{-1}{2048} \, U^3 \, (U(\nu+1) - 2) \\
&\quad \times \left(
256 (\nu+1)^5 U^6-(16 (35 \nu+93))(\nu+1)^4 U^5+(491 \nu^2+2938\nu+2715)(\nu+1)^3 U^4 \right.\\
&\qquad -(2 (100 \nu^3+1173 \nu^2+2098 \nu+1237)) (\nu+1)^2 U^3 \\
& \qquad +(4(\nu+1))(8\nu^4+218\nu^3+637\nu^2+708\nu+285)U^2 \\
& \qquad \left. +(-216-672\nu^3-128\nu^4-1176\nu^2-880\nu)U+32\nu (3+3\nu+2\nu^2)
\right)
\end{align*}
\end{rema}

These explicit parametrizations allow to study the singularities of the series $Q_1$ and $Q_3$ as is partly done in \cite{BernardiBousquet} but without a full proof. One of the main missing steps is a complete study of the singular behaviour of $U$. This is the purpose of the following lemma:

\begin{lemm} \label{lem:etU}
For every fixed $\nu >0$, the series $U$, defined by \eqref{eq:t3U} and seen as a series in $t^3$, has nonnegative coefficients and radius of convergence $\rho_\nu$. In addition, it is convergent at $\rho_\nu$, has a unique dominant singularity (at $\rho_\nu$), and has the following singular behaviour: there exists a positive explicit constant $ \aleph(\nu)$ such that
\[
U(\nu,t^3) =
\begin{cases}
U(\nu,\rho_\nu) - \aleph (\nu) \cdot (1-t^3/\rho_\nu)^{1/2}+o\big((1-t^3/\rho_\nu)^{1/2}\big) & \text{ for $\nu \neq \nu_c$, }\\
U(\nu_c,\rho_{\nu_c}) - \aleph(\nu_c) \cdot (1-t^3/\rho_\nu)^{1/3}+o\big((1-t^3/\rho_\nu)^{1/3}\big)
& \text{ for $\nu = \nu_c$.}
\end{cases}
\]
\end{lemm}
\begin{proof}
All the computations are available in the companion Maple file \cite{MapleU}. From now on, we always consider $U$ as a power series in $t^3$. We first prove that its coefficients are nonnegative. Let us write $F(w) = t^3 \cdot tQ_1 (t)$ with $w=t^3$. It is the generating series of triangulations of the $1$-gon with weight $w$ per vertex and $\nu$ per monochromatic edge. As a power series in $w$, the series $F'$ has obviously nonnegative coefficients since it is the generating series of triangulations of the $1$-gon with a distinguished vertex. To prove that the coefficients of $U$ are nonnegative, we will prove that: 
\begin{equation}\label{eq:F'}
F'(w) = \frac{\nu +1}{2 \nu} U(t^3)
\end{equation}
This striking identity has a combinatorial interpretation, but discussing it here would take us too far from the subject of this article. We plan to return to it in a future work.

Let us denote by $U^\bullet$ the derivative of $U$ with respect to $w$. By differentiating \eqref{eq:t3U} with respect to $w$, we obtain
\[
1 = \frac{\mathrm d}{\mathrm d U} \left( U \frac{(1+\nu) U - 2}{32 \nu^3 (1-2U)^2} \, \phi (U) \right) \cdot U^\bullet,
\]
with
\[
\phi(U) = \Big( 8\nu (\nu+1)^2 U^3 -(11 \nu + 13)(\nu +1) U^2 +2 (\nu +3)(2\nu+1)U -4 \nu \Big).
\]
This gives us the following expression for $U^\bullet$ in terms of $U$:
\[
U^\bullet = \frac{8 \nu^3 (1-2U)^3}{\Big( 3(\nu+1)U^2 - 3 (\nu+1) U +\nu \Big) \Big( -4(\nu+1)^2 U^3 + 3 (\nu+1) (\nu+3) U^2 -6 (\nu +1) U +2 \Big)}.
\]
Furthermore, \eqref{eq:expQ1Q3} reads $F(w) = \dfrac{\widetilde{R_1}(U,\nu)}{\nu^4 (1-2U)^2}$.
Therefore we have
\[
F'(w) = \frac{\mathrm d}{\mathrm d U} \left( \frac{\widetilde{R_1}(U,\nu)}{\nu^4 (1-2U)^2} \right) \cdot U^\bullet.
\]
From there a tedious but basic computation yields \eqref{eq:F'}.

\bigskip

We now prove that the radius of convergence $w_0(\nu)$ of $U$ is equal to $\rho_\nu$. First, from \eqref{eq:F'}, we deduce that $w_0$ is also the radius of convergence of $F'$. Therefore, we can see that $w_0(\nu)$ is a non-increasing function of $\nu$. In addition, it is continuous. Indeed, for $\nu_1 \leq \nu_2$ and for every $w \geq 0$, we have that:
\begin{align*}
F'(\nu_1,w) \leq F'(\nu_2,w) &= \frac{1}{2}\sum_{M\in \bns{1}} |V(M)| w^{|V(M)| - 1} \nu_2^{m(M)}\\
&= \frac{1}{2}\sum_{M\in \bns{1}} |V(M)| w^{|V(M)| - 1} \nu_1^{m(M)} \, \left( \frac{\nu_2}{\nu_1}\right)^{m(M)}\\
&\leq \frac{1}{2}\sum_{M\in \bns{1}} |V(M)| w^{|V(M)|- 1} \nu_1^{m(M)} \, \left( \frac{\nu_2}{\nu_1}\right)^{|E(M)|}\\
&\leq \frac{1}{2}\sum_{M\in \bns{1}} |V(M)| w^{|V(M)|- 1}  \nu_1^{m(M)} \, \left( \frac{\nu_2}{\nu_1}\right)^{3|V(M)|-4} \leq F'(\nu_1,w\,(\nu_2/\nu_1)^3).
\end{align*}
Therefore, 
\[
\left(\frac{\nu_1}{\nu_2} \right)^3 w_0(\nu_1) \leq w_0(\nu_2) \leq w_0(\nu_1),
\]
which proves that $w_0(\nu)$ is continuous. Known results about triangulations without spins ensure that $w_0(1)>0$. Combined with the previous inequalities, it implies that $w_0(\nu)>0$ for every $\nu>0$. 

Since the series $U$ has nonnegative coefficients, its radius of convergence is a singularity by Pringsheim's Theorem. Therefore it is amongst the roots of the discriminant of the algebraic equation satisfied by $U$. This discriminant factorises into $P_1(\nu,w) \cdot P_2(\nu,w)$ given by \eqref{eq:P1} and \eqref{eq:P2} and we have to identify the correct root.

First, we start by identifying the values of $\nu$ for which $P_1$ and $P_2$ have a common positive root. The resultant of these two polynomials in $w$ factorises into several terms:
\begin{align*}
\mathrm{Resultant} (P_1,P_2) & = 4194304\nu^{12} \, (13573 \nu^4-54292\nu^3 + 69811 \nu^2-31038 \nu+67482) \\
& \qquad \cdot (6-14\nu+7\nu^2)^3 \, (\nu+1)^4 \, (\nu-3)^4.
\end{align*}
The factor of degree $4$ has no positive root and is irrelevant to us. For $\nu = 3$, it is easy to verify that the common root of $P_1$ and $P_2$ is negative. This leaves the factor of degree $2$ in $\nu$. Its roots are $\nu_c$ and $1-\frac{\sqrt 7}{7}$. Again, when $\nu = 1-\frac{\sqrt 7}{7}$, it is easy to verify that the common root of $P_1$ and $P_2$ is negative. This leaves $\nu_c$, for which we can verify that the common root of $P_1$ and $P_2$ is $\rho_{\nu_c}$, and since all the other roots of $P_1$ and $P_2$ are not positive real numbers, it implies that $w_0(\nu_c)=\rho_{\nu_c}$.

We now turn our attention to values of $\nu$ different from $\nu_c$. In those cases, we know that $\rho_\nu$ is a root of $P_1$ or of $P_2$, but cannot be a common root. It remains to identify the correct root.
The discriminant of $P_2$ is $-27648 \nu^2 \, (\nu+1)^3 \,(\nu-3)^3$. Thus, for $\nu > 3$, both roots of $P_2$ are imaginary. An easy analysis shows that for $\nu \in (0,3]$, only one of its two roots can take nonnegative values, it is the root given by
\[
w_2(\nu) = \frac{(\nu+1) (3-\nu) \sqrt{(\nu+1) (3-\nu)} - 9 (\nu-1) (\nu^2 -2 \nu -1)}{576 \nu^3},
\]
which is by Definition \ref{def:rho} equal to $\rho_\nu$ for every $\nu \in (0,\nu_c]$.

The discriminant of $P_1$ is $82556485632\nu^{18}(\nu-1)^2(\nu^2-2 \nu-7)^2(\nu+1)^3(\nu-3)^3$. Therefore, for $\nu < 3$, it has a unique (possibly with some multiplicity) real root and for $\nu >3$, it has three real roots (one is a double root for $\nu = 1+2\sqrt{2}$). The following situation is illustrated in Figure \ref{fig:rho}.
Among the three branches of $P_1$, the one that is real for every $\nu>0$ is decreasing for $\nu\geq \nu_c$. We denoted earlier this branch by $w_1(\nu)$.  By Definition \ref{def:rho}, it is equal to $\rho_\nu$ for every $\nu \geq \nu_c$. The other branches of $P_1$ are also real for $\nu \geq 3$, one stays negative and the other can be positive, is increasing in $\nu$ and intersects $w_1$ at $\nu = 1+2\sqrt{2}$ (its is called the second branch of $P_1$ in Figure \ref{fig:rho}).

From the previous description of the roots of $P_1$ and $P_2$, we have $w_0(3) = w_1(3)$ since it is the only positive root for this value of $\nu$. The fact that $w_0$ is nonincreasing in $\nu$ then implies that $w_0(\nu) = w_1(\nu) = \rho_\nu$ for every $\nu \geq \nu_c$. A simple check for $\nu =1$ shows that $w_0(1) = w_2(1)$ and, since $w_0$ is continuous and $w_1$ and $w_2$ are only equal at $\nu_c$, we have $w_0(\nu) = w_2(\nu) = \rho_\nu$ for $\nu \in (0,\nu_c]$.

\bigskip

We now turn to the claim that $U$ has a unique dominant singularity. We have to identify the roots of $P_1$ and $P_2$ other that the radius of convergence $\rho_\nu$ that are on the circle of convergence and test if they correspond to singularities. The following computations are done in the Maple companion file \cite{Maple}. At several occasions, we will need to know the value of $U(\rho_\nu)$. The algebraic equation \eqref{eq:t3U} for $U(t^3)$ writes:
\[
U = t^3 \cdot \psi (U).
\]
It is classical that the value of $U$ at its radius of convergence is the smallest positive root of the caracteristic equation
\[
\psi(U) - U \psi'(U)
\]
which is a rational fraction whose numerator has three factors given by
\begin{align}
\psi_1 (U) &= 2U-1, \notag\\
\psi_2 (U) &= 3(\nu+1)U^2 - 3 (\nu+1)U +\nu, \label{eq:Urho2}\\
\psi_3 (U) &= 4(\nu+1)^2 U^3 - 3 (\nu +1)(\nu+3) U^2 + 6 (\nu+1) U -2 \label{eq:Urho3}.
\end{align}
The three factors have common roots for $\nu \in \{ 1- \sqrt 7 /7 , 1, \nu_c , 3\}$. For $\nu = 3$, the smallest positive root is $1/8$, which is a root of $\psi_3$ and not of the other two factors. For $\nu = \nu_c$, only $\psi_2$ and $\psi_3$ have a common root, it is also the smallest positive root and is therefore $U(\rho_{\nu_c})$. For $\nu = 1$, $\psi_1$ and $\psi_3$ have $1/2$ as common root, but the smallest positive one is a root $\psi_2$. Finally, for $\nu =  1- \sqrt 7 /7$, $\psi_2$ and $\psi_3$ have a common root, but the smallest positive one is a root of $\psi_2$ alone. In conclusion, $U(\rho_\nu)$ is the smallest positive root of $\psi_2$ for $\nu \leq \nu_c$, and the smallest positive root of $\psi_3$ for $\nu \geq \nu_c$. Furthermore, we have $U(\rho_\nu) < 1/2$ for every $\nu >0$.

\medskip

We first look at the roots of $P_2$. For $\nu \leq \nu_c$ the radius of convergence is the positive root of $P_2$. For these values of $\nu$, $P_2$ has two real roots and it is easy to check for which values of $\nu$ these two roots are opposite one from another. It only happens when $\nu=1$ and we know that $U$ has no other dominant singularity than its radius of convergence since it corresponds to the derivative of the generating series of triangulations with a critical site percolation.

When $\nu \geq 3$, $P_2$ has two complex conjugate roots and $\rho_\nu$ is a root of $P_1$. We can compute the modulus of the roots of $P_2$ and see that it is increasing and larger that $\rho_\nu$ for $\nu = 3$, therefore, for $\nu \geq 3$ no roots of $P_2$ have the same modulus than $\rho_\nu$. 

It remains to check the roots of $P_2$ for $\nu \in (\nu_c,3)$. For these values of $\nu$, the roots of $P_2$ are real and differ from $\rho_\nu$. The only possibility for them to be on the circle of convergence is to be equal to $-\rho_\nu$. A Puiseux expansion of the solutions of \eqref{eq:t3U} with the explicit values of these two roots is possible and gives two possible branches. To identify which one of these two branches corresponds to $U$, we look at the constant term of their Puiseux expansion. Since $U$ has nonnegative coefficients, $|U(\rho_\nu)|\geq |U(z)|$ for any $z\in \mathbb{C}$ with $|z|=\rho_\nu$. Out of these two branches, one is singular but gives a value at the root larger than $U(\rho_\nu)$ that was computed previously. The other one is the branch corresponding to $U$ and is not singular. 

\medskip

We now turn to the roots of $P_1$. First, when $\nu \geq 3$, the polynomial $P_1$ has three real roots. By computing the resultant in $\rho$ of $P_1(\rho)$ and $P_1(-\rho)$ we identify the values of $\nu$ for which $P_1$ has two opposite roots. There are six values of $\nu$ for which it occurs. Three of these values are larger than $3$. Two of these values correspond to roots outside the circle of convergence (meaning that $\rho_\nu$ is the third root of $P_1$ for these values of $\nu$). The last possibility is $\nu = 1 + \frac{2}{9} \sqrt{136 - 10 \sqrt{10}}$ for which $\rho_\nu$ and $-\rho_\nu$ are both roots of $P_1$. We then check that for this specific value of $\nu$, the series $U$ is not singular at $-\rho_\nu$.

For $\nu \in [\nu_c,3]$, the roots of $P_1$ are $\rho_\nu$ and two complex conjugates. If one of the complex roots are on the circle of convergence, all three roots of $P_1$ have the same modulus. We can easily compute the cube of this modulus from the coefficients of $P_1$. It is then easy to check that this quantity is never equal to $\rho_\nu^3$, meaning that for this range of values for $\nu$, the three roots of $P_1$ never have the same modulus.

Finally, it remains to check the roots of $P_1$ for $\nu \in (0,\nu_c)$. Unfortunately, there are three values of $\nu$ for which some of the roots of $P_1$ have modulus $\rho_\nu$ (which we recall is a root of $P_2$ for these values of $\nu$). We will show that the roots of $P_1$ are never singularities of $U$ for $\nu < \nu_c$ with Newton's polygon method. We denote by $w_3(\nu)$ any root of $P_1$ for $\nu < \nu_c$. Exact expressions for $w_3$ are too complicated to directly compute a singular expansion of $U$ around $w_3$ with a computer. Instead, using polynomial eliminations, we compute a polynomial $\mathrm{Pol}(W,V)$ whose coefficients only depend on $\nu$ such that 
\[
\mathrm{Pol}(w_3-w,U(w_3)-U(w)) = 0
\]
for all $\nu < \nu_c$. This method is inspired by \cite[proof of Proposition 3.3]{BeCuMie}.

From \eqref{eq:t3U}, we can define a polynomial $\mathrm{alg_U(X,Y)} \in \mathbb Z[\nu] [X,Y]$ such that
\[
\mathrm{alg}_U(w,U(w)) = 0 \quad \text{for $|w| \leq \rho_\nu$}.
\]
We then define $A(Y)$ to be the resultant of $\mathrm{alg}_U(X,Y)$ and $P_1(X)$ with respect to $X$ so that, for every $\nu$, we have $A(U(w_3)) = 0$. The polynomial $A(Y)$ factorizes into to factors, one of them is the polynomial $\psi_3(Y)$ of \eqref{eq:Urho3} that gives the value of $U(\rho_\nu)$ when $\nu \geq \nu_c$ and the other, of degree $9$ in $Y$, will be denoted by $\tilde A (Y)$. We have to establish if $U(w_3)$ is a root of $\psi_3$ or $\tilde A$ in order to continue. We know that for $\nu < \nu_c$, $U(\rho_\nu)$ is the root of $\psi_2$ given by $\frac{1}{2} \left(1 - \sqrt{\frac{3-\nu}{3(1+\nu)}} \right)$. In addition, if $w_3$ is on the circle of convergence, we have $|U(w_3)| < U(\rho_\nu)$ by Pringsheim's theorem. Finally, we also saw that $\psi_3$ has a positive root for $\nu \leq \nu_c$, and that it is stricly larger that $U(\rho_\nu)$ for $\nu < \nu_c$. The discriminant of $\psi_3$ is negative for $\nu < 3$, so it has two imaginary conjugate roots. We can write an equation for the common squared modulus $|w|^2$ of these complex roots from the coefficients of $\psi_3$ by eliminating the real root and the sum of the two imaginary roots. This equation is given by
\[
- 16 (\nu +1)^4 |w|^6 +12 (\nu +1) ^2 (\nu +3) |w|^4 - 6 (\nu+1)(\nu+3) |w|^2 +4 = 0.
\]
The resultant of this polynomial (in $|w|$) with $\psi_2$ (that gives $U(\rho_\nu)$) vanishes only when $\nu = 3$. We can also check that the roots of $\psi_3$ for $\nu =1$ are all $1/2 > U(\rho_1)$. In conclusion, the roots of $\psi_3$ all have modulus stricly larger that $U(\rho_\nu)$ for $\nu < \nu_c$. This in turns shows that, for $\nu < \nu_c$, if $w_3$ is on the circle of convergence of $U$, then $U(w_3)$ is a root of $\tilde A$ and not a root of $\psi_3$.

We now define the following polynomial in $w,V$:
\[
B(w,V) = \mathrm{Resultant}(\tilde A(Y), \mathrm{alg}_U(w,Y-V) , Y)\]
so that $B(w, U(w_3) - U(w)) = 0$. Finally, we define
\[
\mathrm{Pol}(W,V) = \mathrm{Resultant}(B(w-W,V), P_1(w) , w)
\]
so that $\mathrm{Pol} (w_3 - w, U(w_3) - U(w)) =0$ for $\nu \leq \nu_c$ as announced. This polynomial factorizes into two factors, one of degree $18$ in $W$ and one of degree $9$. It is easy to check that only the factor of degree $9$ vanishes for $W=V=0$. We call $\widetilde{\mathrm{Pol}}$ this factor, so that $\widetilde{\mathrm{Pol}} (w_3 - w, U(w_3) - U(w)) =0$ for $\nu \leq \nu_c$.
We can then apply Newton's polygon method to $\widetilde{\mathrm{Pol}}$ to see that $U$ is not singular at $w_3$ for $\nu < \nu_c$.

\bigskip

To finish the proof, we have to establish the singular behaviour of $U$ at $\rho_\nu$. We can do so in a very similar fashion as above. 
Recall that for $\nu \geq \nu_c$, $U(\rho_\nu)$ is a root of $\psi_3$. We define the polynomial
\[
B_3(w,V) = \mathrm{Resultant}(\psi_3(Y), \mathrm{alg}_U(w,Y-V) , Y)\]
so that $B_3(w, U(\rho_\nu) - U(w)) = 0$ for $\nu \geq \nu_c$. Finally, we define
\[
\mathrm{Pol}_3(W,V) = \mathrm{Resultant}(B_3(w-W,V), P_1(w) , w)
\]
so that $\mathrm{Pol}_3 (\rho_\nu - w, U(\rho_\nu) - U(w)) =0$ for $\nu \geq \nu_c$. This polynomial factorizes into two factors, one of degree $6$ in $W$ and one of degree $3$. It is easy to check that only the factor of degree $3$ vanishes for $W=V=0$. We call $\widetilde{\mathrm{Pol}}_3$ this factor, so that $\widetilde{\mathrm{Pol}}_3 (\rho_\nu - w, U(\rho_\nu) - U(w)) =0$ for $\nu \geq \nu_c$.
We can then apply Newton's polygon method to $\widetilde{\mathrm{Pol}}_3$ to see that $U$ has a typical square root singularity as announced at $\rho_\nu$ for $\nu \geq \nu_c$, except maybe at $\nu = \nu_c$ and $\nu = 3$ where the relevant coefficient of $\widetilde{\mathrm{Pol}}_3$ vanishes. The case $\nu = 3$ also gives a square root singularity and the case $\nu = \nu_c$ gives a singularity with exponent $1/3$ as required.

The situation for $\nu \leq \nu_c$ is very similar, 
but with $\psi_2(U(\rho_\nu)) = 0$ for $\nu \leq \nu_c$ instead of $\psi_3(U(\rho_\nu)) = 0$. From here we can define $B_2$, $\mathrm{Pol}_2$ and $\widetilde{\mathrm{Pol}}_2$ is a similar fashion as $B_3$, $\mathrm{Pol}_3$ and $\widetilde{\mathrm{Pol}}_3$ so that $\widetilde{\mathrm{Pol}}_2 (\rho_\nu - w, U(\rho_\nu) - U(w)) =0$ for $\nu \leq \nu_c$ and Newton's polygon method gives the announced singular behaviour.
\end{proof}

\begin{prop}\cite[Claim 24]{BernardiBousquet}\label{prop:asyQ1}
The series $tQ_1$ and $t^3Q_3$ are Ising algebraic.
\end{prop}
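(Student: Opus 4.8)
The statement to prove is Proposition~\ref{prop:asyQ1}: the series $tQ_1$ and $t^3Q_3$ are Ising-algebraic. The plan is to verify the two conditions of the definition directly from the explicit expressions in Theorem~\ref{th:gsBoundary}, treating $U$ as a local uniformizing parameter near the dominant singularity. First I would establish the algebraicity claim: equation~\eqref{eq:t3U} exhibits $t^3$ as an explicit rational function of $U$, so $U$ is an algebraic function of $t^3$; since $tQ_1$ and $t^3Q_3$ are rational in $U$ and $\nu$ by~\eqref{eq:expQ1Q3}, they are algebraic in $t^3$ as well. The combinatorial interpretation guarantees nonnegative coefficients, so Pringsheim's theorem applies and the dominant singularity $\rho_\nu=t_\nu^3$ is real positive.

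The core of the argument is to locate $\rho_\nu$ and read off the singular exponent. Writing $t^3 = \Phi(U)$ for the rational function on the right-hand side of~\eqref{eq:t3U}, the candidate singularities of $U(t^3)$ are the values $t^3=\Phi(u^*)$ where $u^*$ is either a branch point ($\Phi'(u^*)=0$) or a pole of $\Phi$ (i.e. $u^*=1/2$). I would compute $\Phi'(U)$ and factor its numerator: generically $\Phi'$ vanishes at a simple zero $u^*(\nu)$, giving the square-root behavior $U - u^* \sim \mathrm{const}\cdot(1-t^3/\rho_\nu)^{1/2}$ and hence, after substituting into the rational expressions for $tQ_1$ and $t^3Q_3$, the exponent-$3/2$ expansion of item~2 (the exponent jumps from $1/2$ to $3/2$ because the leading Taylor coefficient in $(U-u^*)$ of the relevant rational function is generically nonzero, contributing the $(1-t^3/\rho_\nu)$ term, while the $(U-u^*)^3$ term yields the $3/2$ singular part). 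The polynomial identity $P_1(\nu,\rho_\nu)=0$ for $\nu\ge\nu_c$ (resp.\ $P_2(\nu,\rho_\nu)=0$ for $\nu\le\nu_c$) should fall out by eliminating $U$ between $\Phi'(U)=0$ and $\rho=\Phi(U)$ — a resultant computation. At the special value $\nu=\nu_c=1+1/\sqrt7$, I expect the zero $u^*$ of $\Phi'$ to collide with a second zero (a double branch point), so that $\Phi(U)-\rho_{\nu_c}$ vanishes to order $3$ rather than $2$ at $u^*$; inverting gives $U-u^*\sim\mathrm{const}\cdot(1-t^3/\rho_{\nu_c})^{1/3}$, and substitution produces the exponent-$4/3$ term. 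One must also check the boundary data $\rho_\nu\to+\infty$ as $\nu\to0$ and the closed form $\rho_{\nu_c}=(25\sqrt7-55)/864$, both by direct substitution, and verify that the constants $A,B,C$ are nonzero (the nonvanishing of $C$ being what makes the singular term genuinely present).

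\textbf{Main obstacle.} The delicate point is confirming that the confluence of branch points happens \emph{exactly} at $\nu_c$ and nowhere else, and that for all other $\nu>0$ the dominant singularity is this branch point of $\Phi$ rather than the pole at $U=1/2$ or some other competing singularity on the circle $|t^3|=\rho_\nu$. This requires a careful real-analytic study of $\Phi$ on the interval $U\in(0,1/2)$: one must show $\Phi$ is increasing up to $u^*(\nu)$ so that $U$ genuinely reaches $u^*$ before any pole, and that $u^*(\nu)<1/2$ throughout. The computations are explicit but heavy, and the cleanest route is to quote the relevant parts of \cite[Claim 24]{BernardiBousquet} directly, since this is precisely the singularity analysis carried out there; what remains on our side is mainly to recast their conclusion in the language of the Ising-algebraic definition and to check the $\nu\to0$ asymptotics and the numerology of $P_1$, $P_2$ and $\rho_{\nu_c}$.
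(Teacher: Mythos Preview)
Your overall strategy coincides with the paper's: both argue via the rational parametrization by $U$ from Theorem~\ref{th:gsBoundary}, identify the singular exponent of $U$ by analyzing the branch structure of $t^3=\Phi(U)$ (square-root generically, cube-root at $\nu_c$), check that $U(t_\nu)<1/2$ so the pole of $(1-2U)^{-1}$ is irrelevant, and defer the detailed computations to \cite[Claim~24]{BernardiBousquet}.

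There is, however, a genuine error in your account of how the $3/2$ exponent arises. If $U-u^*\sim c\,(1-t^3/\rho_\nu)^{1/2}$ and $tQ_1=R(U)$ with $R$ rational and regular at $u^*$, then the linear term $R'(u^*)(U-u^*)$ contributes a $(1-t^3/\rho_\nu)^{1/2}$ term, \emph{not} a $(1-t^3/\rho_\nu)$ term as you write. For the leading singular part to be $(1-t^3/\rho_\nu)^{3/2}$ one must have $R'(u^*)=0$; only then does the $(U-u^*)^3$ term supply the $3/2$ singularity. This cancellation is a nontrivial property of the explicit polynomials $\widetilde R_1$, $\widetilde R_3$ evaluated at the critical $u^*(\nu)$, and it is part of what is verified in \cite{BernardiBousquet}; without it your substitution would produce exponent $1/2$ and the conclusion would fail. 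The same issue is sharper at $\nu=\nu_c$: with $U-u^*\sim c\,(1-t^3/\rho_{\nu_c})^{1/3}$, obtaining a $4/3$ leading singular term requires both $R'(u^*)=0$ and $R''(u^*)=0$. You should either state these vanishing conditions explicitly and indicate that they are checked in \cite{BernardiBousquet}, or simply quote the singular expansions of $tQ_1$ and $t^3Q_3$ directly from there, as the paper does.
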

\begin{proof}
First, the fact that $tQ_1$ and $t^3Q_3$ have a rational expression in $\nu$ and $U$ (recall \eqref{eq:expQ1Q3}) ensure that both series are algebraic by closure properties of algebraic functions since $U$ itself is algebraic.
From the proof of Lemma \ref{lem:etU}, we know that $|U(t^3)| < 1/2$ for all $|t| < t_\nu$. Therefore, the series $U$ and $(1-2U)^{-1}$ seen as series in $t^3$ have the same unique dominant singularity at $\rho_\nu$. The form of $tQ_1$ and $t^3Q_3$ given in \eqref{eq:expQ1Q3} then implies that these two series are algebraic and that their only singularities are also those of $U$.

The singular behaviour of $tQ_1$ is easily computable by integration (see \cite{FS}, Theorem VI.9 p. 420) from \eqref{eq:F'}, where we recall that $F(w) = t^3 \, tQ_1(t)$ with $w=t^3$. The singular behaviour of $t^3Q_3$ at $\rho_\nu$ could be obtained by plugging the explicit singular expansion of $U$ into expression \eqref{eq:expQ1Q3} and tracking cancellations. A more direct and satisfying proof without tedious computations can also be obtained with Lemma \ref{le:combi} of the following section, whose proof only relies on the Ising algebraicity of $tQ_1$ that we just established. Indeed, this lemma provides the singular behaviour of $t^3Q_3$ at $\rho_\nu$ and we know that this series has no other dominant singularity.
\end{proof}

\subsubsection{Asymptotic behaviour for triangulations by pinching}
Let $\bns{p,\kP}$ be the subset of triangulations with a boundary of
length $p$, whose boundary satisfies a property $\kP$ depending only on
the length, shape and spins of the boundary (in particular not on the
vertices, faces or edges in the interior regions).
Let $\gnstr[p]^{\kP}$ denote the generating function of triangulations in
$\bns{p,\kP}$.

\begin{lemm}\label{le:combi}
If $\gnstr[p]^\kP$ is algebraic then $\rho_\nu$ is a dominant singularity of $t^3\rightarrow t^pQ_p^\kP(\nu,t)$. In addition, for any $\nu>0$ and any positive integer $p$, there exist constants $\alpha_p(\nu)$, $\beta_p (\nu)$ and $\gamma_p(\nu)$ such that, $\gnstr[p]^\kP$ satisfies the following singular expansion at $\rho_c$:
\begin{itemize}
\item For $\nu\neq \nu_c$
\[
t^pQ_p^\kP(t)=\alpha_p^\kP(\nu)+\beta_p^\kP(\nu)(1-t^3/\rho_\nu)+\gamma_p^\kP(\nu)(1-t^3/\rho_\nu)^{3/2}+o((1-t^3/\rho_\nu)^{3/2}).
\]
\item For $\nu=\nu_c$
\[
t^pQ_p^\kP(t)=\alpha_p^\kP(\nu_c)+\beta_p^\kP(\nu_c)(1-t^3/\rho_{\nu_c})+\gamma_p^\kP(\nu_c)(1-t^3/\rho_{\nu_c})^{4/3}+o((1-t^3/\rho_{\nu_c})^{4/3}).
\]
\end{itemize}
\end{lemm}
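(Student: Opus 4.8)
The plan is to reduce the statement about $\gnstr[p]^\kP$ to the already-established asymptotics for triangulations with a non-simple boundary of size $1$ or $3$ (Proposition~\ref{prop:asyQ1}), using a \emph{pinching} bijection. First I would observe that, given the assumed algebraicity of $\gnstr[p]^\kP$, the series $t^pQ_p^\kP$ (viewed in the variable $t^3$) has a finite radius of convergence, and I must locate its dominant singularity. The key combinatorial input is a map that ``pinches'' or ``slits'' the boundary of a triangulation in $\bns{p,\kP}$: by adding a bounded number of edges and triangular faces to close up the $p$-gon into a boundary of size $1$ (or $3$, depending on congruence of $p$ modulo $3$), one builds, from each triangulation in $\bns{p,\kP}$, a triangulation in $\kQ^1$ (or $\kQ^3$) of controlled size. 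This gives a coefficient-wise inequality of the form $[t^n]\,t^pQ_p^\kP \leq \text{const}\cdot[t^{n+O(1)}]\,(tQ_1 \text{ or } t^3Q_3)$, so the radius of convergence of $t^pQ_p^\kP$ is at least $\rho_\nu$. For the reverse inequality one uses that a triangulation of the $p$-gon with any fixed boundary condition contains, after removing the boundary, interior pieces that are themselves essentially arbitrary triangulations, so its coefficients dominate those of $Q_1$ up to a bounded shift; hence the radius is exactly $\rho_\nu$ and $\rho_\nu$ is the dominant singularity. This is the ``squeeze lemma'' argument alluded to in the preamble to the section.

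Next, to pin down the \emph{nature} of the singularity (exponent $3/2$ for $\nu\neq\nu_c$, exponent $4/3$ at $\nu_c$), I would not try to compute $\gnstr[p]^\kP$ explicitly. Instead, the strategy is to express $t^pQ_p^\kP$ in terms of $Q_1$ (or $Q_3$) and the series $U$ of Theorem~\ref{th:gsBoundary} via a combinatorial decomposition: peeling or unfolding the boundary of a $p$-gon triangulation reduces its length by one at the cost of either inserting a triangle adjacent to the root edge (which contributes a factor involving $U$ or a rational function of $U$) or identifying the root edge with a boundary edge of a smaller-boundary triangulation. Iterating, one gets $t^pQ_p^\kP$ as a polynomial (or rational function) in $U$, $(1-2U)^{-1}$, $t$, and the parameters, with \emph{algebraic} coefficients — this is where the hypothesis that $\gnstr[p]^\kP$ is algebraic is really used, to guarantee such an expression exists and has no other source of singularity. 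Since $U$ and $(1-2U)^{-1}$ are analytic except at $\rho_\nu$, where they have the square-root (resp.\ $1/3$) singular expansion recalled in the proof of Proposition~\ref{prop:asyQ1}, and since $\alpha(\nu)=U(t_\nu)<1/2$, substituting the singular expansion of $U$ into this rational expression and expanding yields precisely a singular expansion of $t^pQ_p^\kP$ whose leading non-analytic term is $(1-t^3/\rho_\nu)^{3/2}$ when $\nu\neq\nu_c$ and $(1-t^3/\rho_{\nu_c})^{4/3}$ at $\nu_c$, with the coefficients $\alpha_p^\kP,\beta_p^\kP,\gamma_p^\kP$ read off from this substitution. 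One must check the leading singular coefficient $\gamma_p^\kP$ is nonzero, which follows because the pinching argument already forces the presence of a genuine singularity of the same type at $\rho_\nu$ (a purely analytic or lower-order-singular behavior would contradict the lower bound on the coefficients).

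The main obstacle I anticipate is establishing the \emph{rational-in-$U$} structure of $t^pQ_p^\kP$ in a way that is uniform in $p$ and in the property $\kP$, while only assuming algebraicity rather than an explicit formula. The honest version of the argument is probably the following: algebraicity of $\gnstr[p]^\kP$ means it lies in the algebraic closure of $\C(t)$, and one shows that in fact it lies in $\C(t)[U]$ (equivalently in the field extension generated by $U$ over $\C(t)$), because all the relevant generating series for bounded-boundary triangulations with any boundary decoration are governed by the single algebraic function $U$ — this is a structural feature of the Bernardi–Bousquet-M\'elou framework. Once that membership is granted, the singular analysis is routine: compose the known Puiseux expansion of $U$ at $\rho_\nu$ with a rational function regular at $U=\alpha(\nu)$ (regularity holds since $\alpha(\nu)\neq 1/2$ and one checks no spurious pole of the rational function is hit), and collect terms up to order $3/2$ (resp.\ $4/3$). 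A secondary, more bookkeeping-type difficulty is handling the three residue classes of $p$ modulo $3$ correctly when choosing whether to pinch down to $Q_1$ or $Q_3$, but this only affects the $O(1)$ shifts in the squeeze inequalities and not the final exponents.
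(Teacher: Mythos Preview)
Your first paragraph — the squeeze argument via pinching injections to pin down the radius of convergence — is essentially what the paper does, and is correct.

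The second and third paragraphs, however, take a substantially harder route than necessary and contain a genuine gap. You propose to show that $t^pQ_p^\kP$ is a rational function of $U$ (or at least lies in $\C(t)[U]$), and then to read off the singular expansion by composing with the known Puiseux expansion of $U$. But the lemma is stated for an \emph{arbitrary} property $\kP$ of the boundary, subject only to the assumption that $Q_p^\kP$ is algebraic. There is no reason, at this level of generality, to expect $Q_p^\kP$ to lie in the specific field extension generated by $U$; your appeal to ``a structural feature of the Bernardi--Bousquet-M\'elou framework'' is not a proof, and the peeling/unfolding decomposition you sketch does not obviously respect a general boundary property $\kP$. (Later in the paper, a rational-in-$U$ expression \emph{is} established for the particular series $Z_p^+$, but that requires the full machinery of Theorem~\ref{th:Aplus} and is not available here.)

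The paper's argument is both simpler and more robust: once the two-sided coefficient bounds
\[
k_p^{\kP}\,[t^{3n}]tQ_1 \;\leq\; [t^{3n}]t^pQ_p^\kP \;\leq\; \tilde k_p^{\kP}\,[t^{3n}]tQ_1
\]
are in hand, one invokes directly the classification of local singular behaviors of algebraic functions (Puiseux expansions, see \cite[Thm~VII.7]{FS}). Any algebraic series has, at its dominant singularity $\rho_\nu$, an expansion in powers of $(1-t^3/\rho_\nu)^{1/q}$ for some integer $q$; the leading non-integer exponent dictates the polynomial factor in the coefficient asymptotics via the transfer theorem. The squeeze bounds force this polynomial factor to be $n^{-5/2}$ (resp.\ $n^{-7/3}$), which uniquely determines the leading singular exponent as $3/2$ (resp.\ $4/3$). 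No knowledge of $U$ or of any explicit parametrization is needed — only algebraicity and the coefficient sandwich.
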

\begin{rema}
We stress the fact that the Lemma does not state that $t^pQ_p^\kP$ is Ising-algebraic. Indeed,  the simple bounds used in the proof do not rule
out the possibility that $Q_p^\kP$ as a function of $t^3$ has other non
real dominant singularities and that these singularities induce an
oscillatory behaviour of $[t^{3n}]t^pQ^{p,\kP}$. This is clearly
illustrated with the case of $Q_{1}(t)$ when viewed as a function of
$t$ instead of $t^3$. Therefore, to establish the Ising-algebraicity of $t^pQ_p^\kP$ with the help of this Lemma, we need to establish that it also has a unique dominant singularity.
\end{rema}
\begin{proof}
We first observe that there exist positive constants $k_p^{\kP}$ and $\tilde k_p^{\kP}$  such that for all $n\geq p$:
\[
k_p^{\kP}[t^{3n}]t\gnstr[1]\leq [t^{3n}]t^p\gnstr[p]^{\kP}
\leq \tilde k_p^{\kP}[t^{3n}]t\gnstr[1].
\]
There is indeed an injection from $\bns[n]{p,\kP}$ into $\bns[n+p+2]{1}$: given an element of $\bns[n]{p,\kP}$, attach a triangle to each side of the
boundary, glue all these triangles together and add two edges to create an outer face of the appropriate degree. Conversely, given a boundary satisfying the property $\mathcal P$, we can first open the root edge and insert a quadrangulation of the one-gon inside the opened region at the corner corresponding to the root vertex. Then, we can add a vertex inside each face of the boundary and join it with an edge to each vertex of the corresponding face. See Figure~\ref{fig:Injection} for an illustration. This gives an injection from $\bns[n]{1}$ into $\bns[n+ 2 p + 1]{p,\kP}$.
\begin{figure}[t]
\begin{center}
\includegraphics[width=0.9\textwidth]{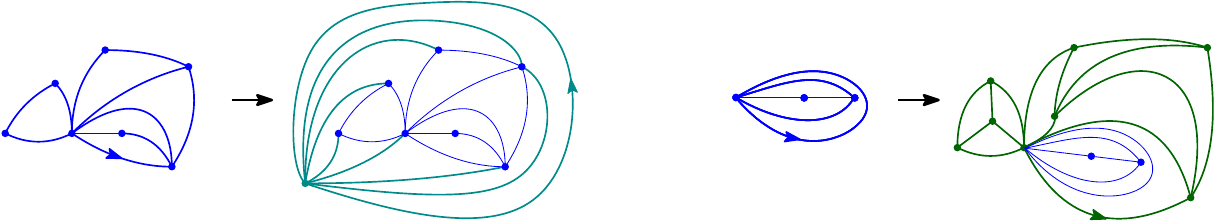}
\caption{\label{fig:Injection}Injection from $\bns[n]{p,\kP}$ into $\bns[n+p+2]{1}$ (left) and from $\bns[n]{1}$ into $\bns[n+2 p +1]{p,\kP}$ (right).}
\end{center}
\end{figure}

These bounds ensure that $\rho_\nu$ is the radius of convergence of $Q_p^{\kP}$ seen as a series in $t^3$ and Pringsheim's theorem then ensures that it is a singularity. The singular expansion follows from the classification of possible
singular behaviour of algebraic functions. Indeed, such functions have a Puiseux expansion in a slit neighbourhood of a singularity $\zeta$ of the form $f(z) = \sum_{k\geq k_0} c_k (z-\zeta)^{k/\kappa}$ with $k_0$ and $\kappa$ integers, see\cite[Thm VII.7 p.498]{FS}. Our bounds ensure that, at the singularity $\rho_\nu$, we have $\kappa = 2$ if $\nu \neq \nu_c$ and $\kappa = 3$ for $\nu = \nu_c$ and the expansion is of the form announced in the Lemma.
\end{proof}

For further use, notice that the case of the generating functions $Z_p$ and $Q_p$ of triangulations with a boundary (simple or not) of length $p$ is included in the statement of the Lemma.

\subsection{Triangulations with simple boundary}\label{sub:plus}

\subsubsection{Triangulations with positive boundary conditions}\label{sub:algebraicity}
We now state and prove our main technical result.
\begin{theo} \label{th:Aplus}
The series
\[
Z^+(y) := \sum_{p \geq 1} Z_{\ps^p}(\nu,t) y^p
\]
satisfies the following equation with one catalytic variable:
\begin{equation}
\label{eq:catA}
2 t^2 \nu (1-\nu) \left(\frac{Z^+(y)}{y} - Z_1^+\right) = y \cdot \mathrm{Pol} \left(\nu, \frac{Z^+(y)}{y}, Z^+_1,Z^+_2,t,y \right)
\end{equation}
where $Z^+_1 = [y] Z^+(y) = Z_{\ps}(\nu,t)$, $Z^+_2 = [y^2] Z^+(y) = Z_{\ps\ps}(\nu,t)$ and $\mathrm{Pol}$ is an explicit polynomial with integer coefficients, given in~\eqref{eq:polynom}.
\end{theo}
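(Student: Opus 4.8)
The plan is to obtain \eqref{eq:catA} by a peeling-type decomposition of triangulations of a $p$-gon with all-$\ps$ boundary, coupled with a companion decomposition handling the boundary conditions that this step creates.

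I would fix $p\geq 1$ and a triangulation $T\in\kT_f^{\ps^p}$, write $e_0=\{a,b\}$ for its root edge and $f_0=abw$ for the unique internal face incident to $e_0$. Removing $f_0$ (deleting $e_0$ and keeping $\{a,w\}$, $\{w,b\}$ on the new boundary) leads to three situations. If $w$ is a boundary vertex of $T$ — including the degenerate cases $w=a$ or $w=b$, where a loop is created and one of the two pieces is a triangulation of the $1$-gon — the edges $\{a,w\}$, $\{b,w\}$ cut $T$ into two triangulations with all-$\ps$ boundaries whose perimeters add up to $p+1$; translated with the catalytic variable $y$ marking the boundary length, these configurations produce a term quadratic in $Z^+(y)/y$, of the form $c(t,\nu)\,y\,(Z^+(y)/y)^2$, together with finitely many lower-order corrections involving $Z^+_1=Z_\ps$ and $Z^+_2=Z_{\ps\ps}$ coming from the small and degenerate configurations. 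If $w$ is an interior vertex, then $T$ is recovered from the peeled triangulation $T'$ of the $(p+1)$-gon by gluing back the triangle $abw$, which costs a factor $\nu t$ (one new, monochromatic edge) regardless of $\sigma(w)$, so the interior case contributes $\nu t$ times the generating function of $(p+1)$-gons whose boundary word is $\ps\,\sigma(w)\,\ps^{p-1}$. When $\sigma(w)=\ps$ this is again an all-$\ps$ boundary and the contribution is proportional to $\nu t\,(Z^+(y)/y-Z^+_1)$; the only genuinely new case is $\sigma(w)=\ns$, where the peeled triangulation has a boundary word with a single $\ns$, namely $\ps^p\ns$.

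To close the system I would iterate the decomposition starting from boundary words $\ps^q\ns^k$ — a single block of $\ns$'s, which is exactly the family produced by successive peelings of the frustrated root edge — using the spin-flip and reflection symmetries $Z_\omega=Z_{\widetilde\omega}$ to stay inside it. Peeling the (now frustrated) root triangle of such a triangulation yields the same trichotomy, the $\ns$-apex case increasing the length of the $\ns$-block by one; encoding that length by a second formal variable $z$, one obtains a single functional equation for the series $W(y,z)$ of triangulations with boundary word $\ps^q\ns^k$, with $y$ marking $q$ and $z$ marking $k$ (so that $W(y,0)=Z^+(y)$), in which $y$ remains the only catalytic variable and $z$ occurs only with bounded degree and through $W$ evaluated at $z$. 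Eliminating $W(y,z)$ for $z\neq0$ — either by the geometric summation over the $\ns$-block length that this structure permits, or, as in the broader algebraicity statement and following~\cite{BernardiBousquet}, by treating the two-catalytic-variable equation for $W$ with Tutte's invariant method — and substituting the finitely many boundary equations satisfied by $Z^+_1$ and $Z^+_2$, one collapses everything onto an identity relating $Z^+(y)/y$, $Z^+_1$ and $Z^+_2$ alone; clearing denominators produces the factor $2t^2\nu(1-\nu)$ on the left and the explicit polynomial $\mathrm{Pol}$ of~\eqref{eq:polynom} on the right, which is exactly \eqref{eq:catA}.

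I expect the heart of the difficulty to be the $\ns$-apex case: identifying the correct auxiliary family of boundary conditions (the single-block words $\ps^q\ns^k$), checking that its peeling equation genuinely closes with a single catalytic variable $y$ rather than forcing in a second one, and carrying out the elimination cleanly — this is where the multiplicative structure of the Ising weight is used in an essential way. A secondary, purely technical burden is the exhaustive bookkeeping of the degenerate peeling steps (loops and multiple edges are allowed for type-I triangulations) and of the base cases of perimeter $1$ and $2$, which is what makes the constant $2t^2\nu(1-\nu)$ and the polynomial $\mathrm{Pol}$ as intricate as they appear.
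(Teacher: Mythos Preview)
Your outline identifies the right auxiliary family --- triangulations with a single-block boundary $\ps^q\ns^k$ --- and your first peeling step (at the root edge of the all-$\ps$ boundary, producing the $\ns$-apex case) is exactly the paper's second equation in~\eqref{eq:eqS}. The gap is in what you do next. The claim that in the functional equation for $W(y,z)$ ``$y$ remains the only catalytic variable and $z$ occurs only with bounded degree'' is not correct, and the hoped-for ``geometric summation over the $\ns$-block length'' does not exist: when you peel the frustrated edge of a $\ps^q\ns^k$ boundary and the apex hits a boundary vertex on the $\ns$-side, the two pieces have $\ns$-block lengths that partition $k$, which forces a divided-difference in $z$ just as the $\ps$-side forces one in $y$. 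Both variables are genuinely catalytic; this is precisely why the paper states at the outset that ``it is necessary to introduce a second catalytic variable'' and writes the system~\eqref{eq:eqS} with $x$ and $y$ symmetric.

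What the paper actually does after writing~\eqref{eq:eqS} is the substantial part you wave at but do not sketch: cast the first equation in kernel form, locate two power-series roots $Y_1,Y_2$ of the kernel (after a nontrivial substitution $x=t+at^2$ to make the second root exist as a formal series), exhibit a first invariant $I(y)=\frac{1}{y}(Z^+(y/t)+1)$, manufacture a second invariant $\widetilde J(y)$ by algebraic manipulation of the remaining equations, and then prove that $\widetilde J$ is a \emph{polynomial} of degree~$2$ in $I$ with coefficients $C_0,C_1,C_2$ depending only on $t,\nu,Z_1^+,Z_2^+$. Only this polynomial relation $\widetilde J = C_2 I^2 + C_1 I + C_0$ yields a single equation in one catalytic variable, which after unwinding is~\eqref{eq:catA}. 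Your proposal names ``Tutte's invariant method'' as an alternative to the (nonexistent) geometric summation, but that method \emph{is} the proof, and the construction of the second invariant and the argument that it is polynomial in the first are the steps that require real work.
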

\begin{proof}
The difficulty of this result stems from the fact that we need \emph{two} catalytic variables to write a functional equation satisfied by $Z^+$. A technical application of Tutte's invariants method, introduced by Tutte (see \cite{Tutte}) and further developed in \cite{BernardiBousquet} allows us to derive the result. All the computations are available in the companion Maple file~\cite{Maple}.
\medskip

{\noindent \bf 1- A functional equation with two catalytic variables:}\\
The series $Z^+$ is a series with one catalytic variable. However, it is necessary to introduce a second catalytic variable to study it. Indeed, when writing Tutte-like equations by opening an edge of the boundary, a $\ns$ sign can appear on the newly explored vertex. It is then necessary to take into account triangulations with signs $\ns$ on the boundary. However, things are not hopeless as we can restrict ourselves to triangulations with a boundary consisting of a sequence of $\ps$ followed by a sequence of $\ns$.  Indeed, opening on the edge $\ps \ns$ of such a triangulation can only produce triangulations with the same type of boundary conditions. Figure \ref{fig:tutteAS} illustrates the different possibilities.
\begin{figure}[ht!]
\begin{center}
\includegraphics[width=0.8\textwidth]{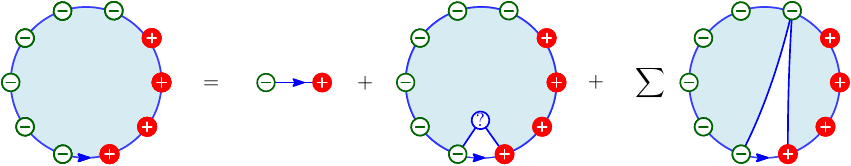}
\caption{\label{fig:tutteAS}Peeling along an interface.}
\end{center}
\end{figure}

Now, let us denote $Z^{+,-}(x,y)$ the generating series of triangulations with boundary conditions of the form $\ps^p\ns^q$ with $p+q \geq 1$, the variable $x$ being the variable for the number of $\ps$ and the variable $y$ being the variable for the number of $\ns$:
\[
Z^{+,-}(x,y) := \sum_{p+q \geq 1} \sum_{M\in \kT_f^{\ps^p\ns^q}}t^{|M|}\nu^{m(M)} \, x^p y^q.
\] 
Note that this series is symmetric in $x$ and $y$ and $Z^{+}(x) = Z^{+,-}(x,0)$. We also need the specialisation $Z^{+}_1 :=[x] Z^{+}(x)$ with positive boundary conditions and
the specialisation $Z^{+,-}_1 (x) := [y] Z^{+,-}(x,y)$ with a single $\ns$ spin (by symmetry we also have $Z^{+,-}_1(y) := [x] Z^{+,-}(x,y)$). The different possibilities illustrated in Figure~\ref{fig:tutteAS} translate into the following system of equations: 
\begin{equation}\label{eq:eqS}
\begin{cases}
Z^{+,-}(x,y) &= txy+\frac{t}{x}Z^{+,-}(x,y)Z^{+}(x)+\frac{t}{y}Z^{+,-}(x,y)Z^{+}(y) \\
 & \quad \quad +\frac{t}{x}(Z^{+,-}(x,y)-xZ^{+,-}_1(y))+\frac{t}{y}(Z^{+,-}-yZ^{+,-}_1(x)),\\
Z^{+}(x)&=\nu t x^2+\frac{\nu t}{x} (Z^{+}(x))^2 + \frac{\nu t}{x}(Z^{+}(x)-x Z^{+}_1)+\nu t Z^{+,-}_1(x).
\end{cases}
\end{equation}

{\noindent \bf 2- Kernel method:}\\
Following the classical kernel method, we write: 
\[
K(x,y) \cdot Z^{+,-}(x,y)=R(x,y)
\]
with
\[
K(x,y)=1-\frac{t}{x}-\frac{t}{y}-\frac{t}{x}Z^{+}(x)-\frac{t}{y}Z^{+}(y)
\]
and
\[
R(x,y) = txy-tZ^{+,-}_1(x)-tZ^{+,-}_1(y).
\] 
The next step is to find two distinct formal power series $Y_1(x),Y_2(x) \in \mathbb{Q}(x)[[t]]$ with coefficients in $\mathbb{Q}(x)$ which cancel the kernel, \emph{ie} such that $K(x,Y_i(x))=0$, for $i=1,2$. 

However, the equation $K(x,Y(x))=0$ can be written
\[
Y(x) = t \cdot \left( \frac{Y(x)}{x} + 1 + \frac{Y(x)}{x}Z^{+}(x) + Z^{+}(Y(x)) \right)
\]
and we can see that it has a unique solution in $\mathbb{Q}(x)[[t]]$ by computing inductively its coefficients in $t$. To get a second solution, we relax the hypothesis and ask for series $Y(x)\in \mathbb{Q}(x)[[t]]$ such that 
\[
K \left(x,\frac{Y(x)}{t}\right)=0.
\]
Note that this is possible because the series $t Z^{+}(y/t)$ is a well defined power series in $t$: indeed, the polynomial in $x$ given by $[t^n] (tZ^{+}(x))$ has degree at most $n - 1$, except for $n=2$ which has the term $\nu t^2 x^2$ for the map reduced to a single edge. Following advice given by Bernardi and Bousquet-Mélou in \cite{BernardiBousquet}, we perform the change of variables $x=t +at^{2}$ because of the term $t/x$ in the Kernel. The kernel equation now reads
\begin{equation}\label{eq:kernel}
Y \cdot (a-\nu Y) = t \cdot \left( 1 + at + Y \frac{Z^{+}(t+at^2)}{t^2} +\nu a Y^2 + (1+at) \left(Z^{+} (Y/t) - \nu Y^2 /t \right) \right)
\end{equation}
Using the fact that $Z^{+}(t+at^2) = \mathcal O(t^3)$ and $tZ^{+}(y/t) = \nu y^2 + \mathcal O(t)$, it is clear that if $Y$ is solution of this last equation, its constant term is either $0$ or $a/\nu$, and its coefficients in $t$ can then be computed inductively. This shows that the kernel equation has indeed two distinct solutions $Y_1$ and $Y_2$ such that
\begin{align*}
Y_1 (t) &= \mathcal O \left( t \right),\\
Y_2 (t) &= \frac{a}{\nu} + \mathcal O \left( t \right).
\end{align*}

{\noindent \bf 3 - Computation of invariants}\\
By writing
\[
K \left(x,\frac{Y_1(x)}{t}\right)=K \left(x,\frac{Y_2(x)}{t}\right)
\]
we can see that
\[
\frac{1}{Y_1} \left(  Z^{+}\left( \frac{Y_1}{t} \right) +1 \right) = \frac{1}{Y_2} \left(  Z^{+}\left( \frac{Y_2}{t} \right) +1 \right).
\]
Following Tutte we say that the quantity
\begin{equation} \label{eq:inv1}
I(y)=\frac{1}{y} \left(  Z^{+} \left( \frac{y}{t} \right) +1 \right)
\end{equation}
is an invariant since it takes the same value for $Y_1$ and $Y_2$ and we note this common value $\mathcal I$. To find a second invariant we have to dig deeper and look at all the other equations.

First, notice that since $K(x,Y_i/t) = 0$, we have two additional equations $R(x,Y_i/t) = 0$ that link $x,Y_i, Z^{+,-}_1(x)$ and $Z^{+,-}_1(Y_i/t)$. Solving this new system for $x$ and $Z^{+,-}_1(x)$ gives:
\begin{align*}
x &= t \cdot \frac{Z^{+,-}_1(Y_1/t) - Z^{+,-}_1(Y_2/t)}{Y_1 - Y_2},\\
Z^{+,-}_1(x) &= \frac{Y_2 \, Z^{+,-}_1(Y_1/t) - Y_1 \, Z^{+,-}_1(Y_2/t)}{Y_1 - Y_2}.
\end{align*}
Using equation \eqref{eq:eqS} linking $Z^{+}(y)$, $Z^{+,-}_1(y)$ and $y$ together with the expression of $\mathcal I$, we can express $Z^{+,-}_1(Y_i/t)$ in terms of $Y_1$, $Y_2$ and $\mathcal I$:
\[
Z^{+,-}_1(Y_i/t) = -t Y_i \, \mathcal{I}^2 + \frac{Y_i + \nu t^2}{\nu t} \,  \mathcal{I} + \frac{\nu Z^{+}_1 t^2 - t - \nu Y_i^2}{\nu t^2}
\]
for $i=1,2$. We can now use either of these last two identities to express $x$ and $Z^{+,-}_1(x)$ in terms of $Y_i$ and $\mathcal I$:
\begin{align}
\label{eq:xI}
x &= -t^2 \, \mathcal I^2 +\frac{\mathcal I}{\nu} - \frac{Y_1 + Y_2}{t},\\
\label{eq:SxI}
Z^{+,-}_1(x) &= -t \mathcal I - \frac{Y_1 Y_2}{t^2} - \frac{1}{\nu t} +Z^{+}_1.
\end{align}
Using once again the kernel equation, we can express $Z^{+}(x)$ in terms of $x$, $Y_1$ and $Z^{+}(Y_1/t)$ and therefore solely in terms of $Y_1$, $Y_2$ and $\mathcal I$:
\begin{equation}
\label{eq:AxI}
Z^+(x) = t^3 \, \mathcal I ^3 - \frac{\nu +1}{\nu} \,t \, \mathcal I^2 + \left(\frac{1}{\nu t} +Y_1+Y_2 \right) \mathcal I - \frac{Y_1+Y_2}{t^2} - 1.
\end{equation}
Finally, putting our expressions \eqref{eq:xI}, \eqref{eq:SxI} and \eqref{eq:AxI} into the second equation of \eqref{eq:eqS} verified by $Z^{+}(x)$ gives an equation linking $Y_1$, $Y_2$ and $\mathcal I$:
\begin{equation}
\label{eq:Inv2.0}
Y_1^2 + Y_2^2 + Y_1 Y_2 + \beta(\mathcal{I}) \, (Y_1 + Y_2) + \delta(\mathcal{I}) = 0
\end{equation}
where
\begin{align*}
\beta(X) &= \frac{\nu t^4 \, X^2 + (2 \nu -3) t^2 \, X+ 1 - \nu}{\nu t},\\
\delta(X) &= \frac{2 t^4 \nu(\nu - 1) \, X^3 + t^2 (2 - \nu ( \nu + 1) ) \, X^2 + (\nu -1) X + 2 \nu t - \nu^2 t - 2 \nu^2 t^2 \, Z^{+}_1}{\nu^2}.
\end{align*}
At this point, we almost have a second invariant and just have to isolate $Y_1$ and $Y_2$ in \eqref{eq:Inv2.0} to get it. Following the guidelines of \cite{BernardiBousquet}, we want to perform a change of variables to transform equation \eqref{eq:Inv2.0} into an equation of the form
\begin{equation}
\label{eq:Ui}
U_1^2 + U_2^2 + U_1 U_2 - 1 = 0
\end{equation}
where $U_i=U(Y_i)$ for some function $U$.
Since
\[
(U_1 - U_2)(U_1^2 + U_2^2 + U_1 U_2 - 1) = (U_1^3 - U_1) - (U_2^3 - U_2)
\]
we will deduce that
\[
U_1^3 - U_1 = U_2^3 - U_2
\]
giving us our second invariant.

First, setting $Y_i = X_i - \frac{1}{3} \beta(\mathcal{I})$ yields the following:
\[
X_1^2 + X_2^2 + X_1 X_2 + \delta(\mathcal{I}) - \frac{\beta(\mathcal{I})^2}{3}= 0.
\]
Now, setting
\[
U_i = \frac{X_i}{\sqrt{\frac{\beta(\mathcal{I})^2}{3} - \delta(\mathcal{I})}} = \frac{Y_i + \frac{1}{3} \beta(\mathcal{I})}{\sqrt{\frac{\beta(\mathcal{I})^2}{3} - \delta(\mathcal{I})}}
\]
We have equation \eqref{eq:Ui} and thus
\[
\frac{Y_1 + \frac{1}{3} \beta(\mathcal{I})}{\sqrt{\frac{\beta(\mathcal{I})^2}{3} - \delta(\mathcal{I})}}\cdot \left( \left(\frac{Y_1 + \frac{1}{3} \beta(\mathcal{I})}{\sqrt{\frac{\beta(\mathcal{I})^2}{3} - \delta(\mathcal{I})}}\right)^2 -1\right) = \frac{Y_2 + \frac{1}{3} \beta(\mathcal{I})}{\sqrt{\frac{\beta(\mathcal{I})^2}{3} - \delta(\mathcal{I})}}\cdot \left( \left(\frac{Y_2 + \frac{1}{3} \beta(\mathcal{I})}{\sqrt{\frac{\beta(\mathcal{I})^2}{3} - \delta(\mathcal{I})}}\right)^2 -1\right).
\]
Now we just have to transform the last equation into an equality with no radicals
\begin{align*}
\left(Y_1 + \frac{1}{3} \beta(\mathcal{I}) \right) &\cdot \left( \left(Y_1 + \frac{1}{3} \beta(\mathcal{I}) \right)^2 - \left(\frac{\beta(\mathcal{I})^2}{3} - \delta(\mathcal{I}) \right) \right) \\
&= \left(Y_2 + \frac{1}{3} \beta(\mathcal{I}) \right)\cdot \left( \left(Y_2 + \frac{1}{3} \beta(\mathcal{I}) \right)^2 - \left(\frac{\beta(\mathcal{I})^2}{3} - \delta(\mathcal{I}) \right) \right)
\end{align*}
to get our second invariant
\begin{equation*}
\label{eq:Inv2.1}
J(y) = \left(y + \frac{1}{3} \beta(I(y)) \right)\cdot \left( \left(y + \frac{1}{3} \beta(I(y)) \right)^2 - \left(\frac{\beta(I(y))^2}{3} - \delta(I(y)) \right) \right).
\end{equation*}

Of course, if we eliminate from $J(y)$ the terms depending on $y$ only through $I(y)$, we still get an invariant. It is given by
\begin{align}
\label{eq:Inv2.2}
\widetilde{J}(y) & = 2 \frac{\nu - 1}{\nu} \, t^4 \, y \, I(y)^3 + \left( t^3 \, y^2  - \frac{\nu^2+\nu-2}{\nu^2} \, t^2 \, y \right) I(y)^2 + \left( \frac{2 \nu - 3}{\nu} \, t \, y^2 + \frac{\nu -1}{\nu^2} \, y \right) I(y) \\
& \quad \quad + y^3 - \frac{\nu-1}{\nu \, t} \, y^2 + \left(\frac{\nu-2}{\nu} + 2 \,t \, Z^{+}_1(t) \right) \, t \, y. \notag
\end{align}
The two invariants $J$ and $\widetilde J$ contain the same information and we will work with $\widetilde J$ to shorten computations.

\bigskip

{\noindent \bf 4 - $\tilde J$ is a polynomial function of $I$}\\
Borrowing again from Tutte and Bernardi-Bousquet M\'elou \cite{BernardiBousquet}, we now show that $\widetilde J (y)$ is a polynomial in $I(y)$ with explicit coefficients depending only on $\nu$ and $t$. To that aim, we first notice that from expression \eqref{eq:inv1} we can easily write
\[
I(y) = \frac{1}{y} + R(y)
\]
where $R(y)$ is a series having no pole at $y=0$. Hence, from the form of $\tilde J$, we can find Laurent series $C_0(t), \, C_1(t)$ and $C_2(t)$ (depending on $\nu$) such that the series
\begin{equation}\label{eq:InvEq}
H(y) := \widetilde{J}(y) - C_2(t) \, I(y)^2 - C_1(t) \, I(y) - C_0(t)
\end{equation}
has coefficients in $t$ which are rational in $y$ and vanish at $y=0$. The computations of these coefficients is straightforward: we first eliminate the term in $1/y^2$ of $\widetilde J (y)$ , then the term in $1/y$ of $\widetilde{J}(y) - C_2(t) \, I(y)^2$ and finally the constant term of $\widetilde{J}(y) - C_2(t) \, I(y)^2 - C_1(t) \, I(y)$. The explicit values of the $C_i$'s are:
\begin{align*}
C_2(t) & = 2 \frac{\nu - 1}{\nu} \, t^4,\\
C_1(t) & = \frac{\nu - 1}{ \nu^2} \, t^2 \, \left(2 \nu t Z^{+}_1(t) - \nu -2  \right),\\
C_0(t) & = 2 \frac{\nu - 1}{\nu} \, t^2 \, Z^{+}_2(t) +  2 \frac{\nu - 1}{\nu} \, t^2 \, \left( Z^{+}_1(t) \right)^2 - \frac{\nu(\nu+1) - 2}{\nu^2} Z^{+}_1(t) +\frac{\nu^2 \, t^3 + \nu -1}{\nu^2}.
\end{align*}

We see from the expressions of the $C_i$'s and of $I(y)$ and $\widetilde{J}(y)$ that $H(y)$ is in fact a power series in $t$ with coefficients that vanish at $y=0$. Supposing that $H(y)$ is not $0$, we can write
\[
H(y) = \sum_{n \geq n_0} h_n(y) t^n
\]
and evaluating $H(Y_i)$ gives
\[
\left[ t^{n_0} \right] H(Y_i) = h_{n_0} \left(\left[ t^0 \right] Y_i \right).
\]
On the one hand we have $\left[ t^0 \right] Y_1 = 0$ so that $\left[ t^{n_0} \right] H(Y_1) = 0$. On the other hand, $\left[ t^0 \right] Y_2 = \frac{a}{\nu}$ and, $h_{n_0}(y)$ is different from $0$ by assumption and does not depend on $a$ since $H$ itself does not. Therefore we have $H(Y_1) \neq H(Y_2)$ which contradicts the fact that $H(y)$ is an invariant. This means that $H(y) = 0$. 
\bigskip

{\bf \noindent 5 - And finally an equation with only one catalytic variable!}\\
Now, replacing in the invariant equation \eqref{eq:InvEq} each $C_i$ by its value gives an equation for satisfied by $I(y)$:
\begin{align*}
0 = & 2 \nu (\nu -1) \, t^5 \, y \, I^3(y)  + \left( \nu^2 \, t \, y^2 - (\nu (\nu +1) -2)  \, y - 2 \nu ( \nu -1) t^2 \right) \,t^3 \, I^2(y) \\
& - \left(2 \nu (\nu -1 ) \, Z^{+}_1(t) + (\nu (3-2\nu)) \, t \, y^2 - (\nu -1) \, y + (2 - \nu (\nu+1)) \, t^2 \right) \,t \, I(y)\\
& + 2 \nu (\nu -1) \, t^3 \, (Z^{+}_1 (t) )^2 - \left( 2 \nu^2 t y + (2 - \nu (\nu +1 ) \right) t^2 \, Z^{+}_1(t) - 2 \nu (\nu -1) t^3 Z^{+}_2(t) \\
& + \nu^2 t y^3 - \nu (\nu -1) y^2 - \nu (\nu -2) t^2 y - \nu^2 t^4 - \nu t + t.
\end{align*}
Replacing $I(y)$ by its expression \eqref{eq:inv1} and performing the change of variable $y \to t y$ finally yields an equation with one catalytic variable for $M$:
\begin{equation} \label{eq:eqZ+}
\begin{split}
0 = & 2 \nu (\nu-1) \,t^2 \, (Z^{+}(y))^3 
+ ( \nu^2 t^3 y^2 - (\nu ( \nu -1) -2) t y + 4 t^2 \nu(\nu  -1)) (Z^{+}(y))^2 \\
& +\left(- 2 t^2 \nu ( \nu - 1)\, y \, Z^{+}_1(t) 
+ \nu (2 \nu -3) t^2 \, y^3 
+(2 \nu^2 t^3+\nu-1) y^2
- (\nu ( \nu -1) -2) t y
+2 t^2 \nu ( \nu - 1) \right) \,Z^{+}(y)\\
& - 2 t^2 \nu ( \nu - 1)\, y^2 \, (Z^{+}_1(t))^2
+(-2 y^3 t^3 \nu^2+ (\nu (\nu+1)-2)t y^2 - 2 \nu (\nu -1 ) t^2 y) \, Z^{+}_1(t)
- 2 \nu (\nu -1 ) t^2 y^2 \, Z^{+}_2(t)\\
& +y^5 t^3 \nu^2 -  \nu (\nu -1 ) t y^4+ 2 \nu (\nu - 1) t^2 y^3.
\end{split}
\end{equation}
This equation reads
\[
2 t^2 \nu (1-\nu) \left(\frac{Z^{+}(y)}{y} - Z^{+}_1\right) = y \cdot \mathrm{Pol} \left(\nu, \frac{Z^{+}(y)}{y}, Z^{+}_1,Z^{+}_2,t,y \right)
\]
with
\begin{equation}\label{eq:polynom}
\begin{split}
\mathrm{Pol} (\nu,a,a_1,a_2, t,y) = \, & \nu (\nu -1) t^2y + \nu (1- \nu) t y^2 + \nu^2 t^3 y^3 + 2 \nu ( 1-\nu) t^2 a_2 \\
& -(2 \nu ^2 t^2 y + 2 - \nu (1+\nu)) t a_1 + 2 \nu (1-\nu) t^2 a_1^2 + 2 \nu (1-\nu) t^2 a_1 a\\
& + \left((2-\nu(1+\nu))t + y (\nu-1) +2 \nu^2 t^3 y +\nu (2 \nu -3) t^2 y^2 \right)a\\
& + \left( \nu^2 t^2 y^2 + (2-\nu ( \nu+1)) y + 4 \nu (1-\nu ) t\right)t a^2 + 2 \nu (\nu-1) t^2 y a^3.
\end{split}
\end{equation}
\end{proof}

\subsubsection{Algebraicity and singularity of triangulations with positive boundary}
The equation with one catalytic variable \eqref{eq:catA} could be solved by the general methods of Bousquet-M\'elou and Jehanne \cite{BMJ} or even by guess and check \emph{\`a la} Tutte. We can also rely on the expressions for $Q_1$ and $Q_3$, obtained in~\cite{BernardiBousquet} and recalled in~Theorem~\ref{th:gsBoundary}, to solve it and even obtain a rational parametrization of $Z^+$, see \cite{Maple}. However, we will only need the following result:
\begin{prop}\label{prop:expRatM}
Recall from Theorem~\ref{th:gsBoundary}, that $U\equiv U(\nu,t)$ is the unique power series in $t$ having constant term $0$ and satisfying \eqref{eq:t3U}.
Each series $Z_{\oplus^p}=Z^{+}_p$ is
algebraic over $\mathbb{Q}(\nu,t)$. More precisely there exist polynomials $R_p^\nu$ in $U$ whose coefficients are rational in $\nu$, such that, for all $p\geq 1$:
\[
	t^{3p}\cdot t^pZ^{+}_p = \frac{R_p^\nu(U)}{(1-2U)^{3p}}.
\]
\end{prop}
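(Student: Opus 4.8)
The plan is to argue by induction on $p$, feeding the one-catalytic-variable equation~\eqref{eq:catA} of Theorem~\ref{th:Aplus} with the explicit $U$-expressions of Theorem~\ref{th:gsBoundary}. The base case $p=1$ is immediate: a $1$-gon is bounded by a loop, so its single boundary vertex is the root vertex, and summing over the two possible spins gives $Z^{+}_{1}=Z_{\oplus}=Q_{1}$. Theorem~\ref{th:gsBoundary} then yields $t^{4}Z^{+}_{1}=t^{3}\cdot tQ_{1}=\widetilde{R_{1}}(U,\nu)/\big(\nu^{4}(1-2U)^{2}\big)$, which is of the announced form $R_{1}^{\nu}(U)/(1-2U)^{3}$ after setting $R_{1}^{\nu}(U)=\widetilde{R_{1}}(U,\nu)(1-2U)/\nu^{4}$. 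For $p=2$, once $Z^{+}_{1}$ is known, \eqref{eq:catA} becomes a polynomial equation in $Z^{+}(y)$ whose only remaining scalar unknown is $Z^{+}_{2}$; solving it --- for instance by the kernel method applied to~\eqref{eq:catA}, or by matching the low-order coefficients of $Z^{+}(y)$ against $Q_{1}$ and $Q_{3}$ of Theorem~\ref{th:gsBoundary} --- and reducing via~\eqref{eq:t3U} expresses $t^{8}Z^{+}_{2}$ as a rational function of $U$; one checks that its reduced denominator divides $(1-2U)^{6}$.

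For the induction step, fix $p\ge 2$ and assume $t^{3j}\cdot t^{j}Z^{+}_{j}=R_{j}^{\nu}(U)/(1-2U)^{3j}$ for all $j\le p$. Write $Z^{+}(y)=y\,a(y)$ with $a(y)=\sum_{q\ge 0}Z^{+}_{q+1}y^{q}$ and extract the coefficient of $y^{p}$ in~\eqref{eq:catA}. On the left this produces $2\nu(1-\nu)\,t^{2}\,Z^{+}_{p+1}$; on the right it produces $[y^{p-1}]\,\mathrm{Pol}\big(\nu,a,Z^{+}_{1},Z^{+}_{2},t,y\big)$, and since $\mathrm{Pol}$ in~\eqref{eq:polynom} has degree $3$ in its catalytic slot, a direct inspection shows that this quantity involves only $Z^{+}_{1},\dots,Z^{+}_{p}$. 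Hence, for $\nu\notin\{0,1\}$, we obtain an explicit identity
\[
2\nu(1-\nu)\,t^{2}\,Z^{+}_{p+1}=\Phi_{p}\big(Z^{+}_{1},\dots,Z^{+}_{p};\nu,t\big),
\]
with $\Phi_{p}$ a polynomial with coefficients in $\mathbb{Q}[\nu,t]$; this determines $Z^{+}_{p+1}$ from $Z^{+}_{1},\dots,Z^{+}_{p}$ over $\mathbb{Q}(\nu,t)$. The boundary value $\nu=1$ (uniform triangulations) follows either by continuity in $\nu$ or by a direct argument. Multiplying this identity by the power of $t$ dictated by the congruence $|T|\equiv -p\pmod 3$ turns it into a relation between the series $t^{3j}\cdot t^{j}Z^{+}_{j}$, $j\le p+1$, whose coefficients are polynomials in $\nu$ and $t^{3}$.

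It then remains to substitute $t^{3}=U\,\mathcal{G}_{\nu}(U)/(1-2U)^{2}$ --- the rational expression in $U$ supplied by~\eqref{eq:t3U}, where $\mathcal{G}_{\nu}$ is a degree-$4$ polynomial with $\mathcal{G}_{\nu}(0)=1/(4\nu^{2})\neq 0$ --- together with the induction hypothesis, into the relation just obtained, and to clear denominators. This exhibits $t^{3(p+1)}\cdot t^{p+1}Z^{+}_{p+1}$ as a rational function of $U$ over $\mathbb{Q}(\nu)$; the heart of the argument is to verify that its reduced denominator divides $(1-2U)^{3(p+1)}$ and that the numerator is a genuine polynomial in $U$, that is, no factor of $U$ or of $\mathcal{G}_{\nu}(U)$ survives in the denominator, and exactly three extra powers of $(1-2U)$ are created at each step. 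This pole-order bookkeeping, which relies on the precise shapes of~\eqref{eq:polynom} and~\eqref{eq:t3U} (a priori the substitution could worsen the denominator), is the only delicate point and is the main obstacle. The corresponding computations --- together with the alternative route, which solves~\eqref{eq:catA} directly and produces a rational parametrization of $Z^{+}(y)$ in terms of $y$ and $U$, from which each $Z^{+}_{p}$ is recovered by a residue extraction at $y=0$ --- are elementary but lengthy and are carried out in the companion Maple file~\cite{Maple}.
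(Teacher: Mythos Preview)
Your approach is essentially the paper's: induction on $p$, with $Z^{+}_{1}=Q_{1}$ as the starting point and coefficient extraction in the one-catalytic-variable equation~\eqref{eq:catA} (equivalently~\eqref{eq:eqZ+}) for the step. The paper also defers the precise denominator bookkeeping, so your level of detail there is comparable.

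The one genuine difference is the base case $p=2$. The paper does \emph{not} obtain $Z^{+}_{2}$ by solving~\eqref{eq:catA} (via kernel method or otherwise); instead it uses a short combinatorial argument: peeling the root edge of $Z_{\oplus\oplus}+\nu Z_{\oplus\ominus}$ relates it to $Q_{3}-Q_{1}Q_{2}$, while the simple/non-simple boundary decomposition gives $Z_{\oplus\oplus}+Z_{\oplus\ominus}=Q_{2}-Q_{1}^{2}$; together with $Q_{1}=\nu t Q_{2}$ this yields $Z^{+}_{2}$ as an explicit polynomial in $Q_{1}$ and $Q_{3}$, so the $U$-expression of Theorem~\ref{th:gsBoundary} applies directly. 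Your suggestion of ``matching low-order coefficients of $Z^{+}(y)$ against $Q_{1}$ and $Q_{3}$'' is not quite right as stated (those series do not appear as coefficients of $Z^{+}$), and the kernel-method route, while valid in principle, would require additional work to reach the explicit rational form in $U$. The paper's combinatorial shortcut is cleaner here and worth knowing.
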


\begin{proof}
We proceed by induction on $p$, the result is clear for $Z^{+}_1$ (which is equal to $Q_1$) by Theorem~\ref{th:gsBoundary}. For $p=2$, on the one hand, we can write a Tutte-like equation for triangulations contributing to $Z_{\ps\ps}$ and to $Z_{\ps\ns}$. Peeling the root edge of those triangulations yields the following equality:
\[
Z_{\ps\ps}(t)+\nu Z_{\ps\ns}(t) = 2t\nu +t \nu \left( Z_{\ps\ps}(t)+Z_{\ps\ns}(t) \right) + t \nu \left( Z_{\ps\ps\ps} (t) + 3 Z_{\ps\ps\ns}(t) \right).
\]
Since $Q_1(t)Q_2(t)$ enumerates the triangulations with a boundary of length 3 rooted on a loop with spin $\ps$, we also have 
\begin{equation}\label{eq:Z3moins}
Z_{\ps\ps\ps} (t) + 3 Z_{\ps\ps\ns}(t) = Q_3(t) - Q_1(t)Q_2(t).
\end{equation}
On the other hand, $Z_{\ps\ps}(t)+Z_{\ps\ns}(t)$ enumerates the triangulations with a simple boundary of length 2 and hence: 
\begin{equation}\label{eq:Z2moins}
	Z_{\ps\ps}(t)+Z_{\ps\ns}(t) = Q_2(t)-Q_1(t)^2.
\end{equation}
Combining these two relations and $Q_1(t) = \nu t Q_2(t)$, we obtain the following expression for $Z^+_2 = Z_{\ps\ps}$
\[
Z^+_2(t) = \frac{2 \nu}{1 - \nu} \, t + \frac{\nu}{1 - \nu} \, t Q_3 (t) - \frac{1}{(1 - \nu) t} \, Q_1 (t) - Q^2_1(t)
\]
which, with the expressions of $t Q_1$ and $t^3 Q_3$ given in Theorem~\ref{th:gsBoundary}, implies the statement for $t^6\cdot t^2Z^+_2$.
\smallskip

We now carry out an induction. By the result of Theorem~\ref{th:Aplus}, and more precisely by setting $y\leftarrow ty$ in equation~\eqref{eq:eqZ+} and then dividing it by $t^2$, we get:
\begin{equation*}
\begin{split}
2\nu(1-\nu)Z^{+}(ty) = &
2\nu(1-\nu) \, tZ^{+}_1 \, y
+ (1-\nu) \Big(2 \nu \, (tZ_1^+)^2 + 2 \nu \, t^2 Z_2^+ - ( \nu +2) \,tZ_1^+ \Big) y^2 \\
& - \nu t^3 (2 \nu \,tZ_1^+ - \nu + 1) y^3 - t^3 \nu (\nu-1) y^4 + \nu^2 t^6 y^5 \\
& + \Big( \nu t^3 (2\nu-3) y^3+(\nu+2 \nu^2 t^3-1) y^2-(\nu-1)(\nu+ 2 \nu \, tA1 +2) y \Big) Z^+(ty)\\
& + \Big( \nu^2 t^3  y^2 - (\nu+2)(\nu-1) y+ 4 \nu(\nu-1) \Big) (Z^+(ty))^2\\
& + 2\nu(\nu-1)(Z^{+}(ty))^3.
\end{split}
\end{equation*}
For $p \geq 3$, identifying the coefficients of $y^p$ on both sides leads to
\begin{equation*}
\begin{split}
2\nu(1-\nu) t^p Z_p^+ =
& - \nu t^3 (2 \nu \,tZ_1^+ - \nu + 1) \delta_{p=3} - t^3 \nu (\nu-1) \delta_{p=4} + \nu^2 t^6 \delta_{p=5} \\
& + \nu t^3 (2\nu-3) t^{p-3} Z_{p-3}^+ +(\nu+2 \nu^2 t^3-1) t^{p-2} Z_{p-2}^+ -(\nu-1)(\nu+ 2 \nu \, tA1 +2) t^{p-1} Z_{p-1}^+ \\
& +  \nu^2 t^3  [y^{p-2}] (Z^+(ty))^2 - (\nu+2)(\nu-1) [y^{p-1}] (Z^+(ty))^2+ 4 \nu(\nu-1)[y^{p}] (Z^+(ty))^2 \\
& + 2\nu(\nu-1) [y^p] (Z^{+}(ty))^3.
\end{split}
\end{equation*}
Multipliying this identity by $t^{3p} (1-2U)^{3p}$ gives the following recursion relation for the polynomials $R_p^\nu (U)$:
\begin{equation*}
\begin{split}
2\nu(1-\nu) R_p^\nu (U) =
& - \nu t^{3p}(1-2U)^{3p-2} (\widetilde{R_1} (U) /(16 \nu^3) - (\nu - 1) t^3 (1-2U)^2) \delta_{p=3} \\
& - t^{3(p+1)} (1-2U)^{3p} \nu (\nu-1) \delta_{p=4} + \nu^2 t^{3(p+2)} (1-2U)^{3p} \delta_{p=5} \\
& + \nu t^{12} (1-2U)^{9} (2\nu-3) R_{p-3}^\nu(U) \\
& +(\nu+ 2 \nu^2 t^3-1) t^6 (1-2U)^6 R_{p-2}^\nu(U) \\
& -(\nu-1)((\nu+2)t^3 (1-2U)^3 + 2 \,(1-2U) \widetilde{R}_1(U)/\nu^3 ) R_{p-1}^\nu(U) \\
& +  \nu^2 t^9 (1-2U)^6 \sum_{i+j = p-2} R_i^\nu (U) R_j^\nu (U)\\
&- (\nu+2)(\nu-1) t^3 (1-2U)^3 \sum_{i+j = p-1} R_i^\nu (U) R_j^\nu (U) \\
&+ 4 \nu(\nu-1)\sum_{i+j = p} R_i^\nu (U) R_j^\nu (U) \\
& + 2\nu(\nu-1) \sum_{i+j+k = p} R_i^\nu (U) R_j^\nu (U) R_k^\nu(U).
\end{split}
\end{equation*}
Using the fact that $32 \nu^3 t^3 (1-2U)^2$ is a polynomial in $U$ and $\nu$ with integer coefficients from \eqref{eq:t3U} finishes the proof.
\end{proof}

With these expressions of the series $Z_p^+$ in terms of $U$, following the exact same chain of arguments as in the beginning of the proof of Proposition~\ref{prop:asyQ1}, we obtain that these series are algebraic and that their singularities are singularities of $U$.  Combined with Lemma~\ref{le:combi}, it yields the following crucial result:
\begin{coro}\label{co:ZplusSingular}
For any $p$, the series $Z^+_p$ is Ising-algebraic.
\end{coro}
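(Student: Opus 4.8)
The plan is to mimic the proof of Proposition~\ref{prop:asyQ1} almost verbatim, now that Proposition~\ref{prop:expRatM} furnishes us with explicit rational expressions of the $Z_p^+$ in terms of the master series $U$. The key point is that the singular structure of $U$ has already been completely analyzed in the proof of Proposition~\ref{prop:asyQ1}: as a power series in $t^3$ with nonnegative coefficients, $U$ has a unique dominant singularity $\rho_\nu$, with a square-root singularity $U(t)=\alpha(\nu)-\gamma(\nu)(1-t^3/\rho_\nu)^{1/2}+o(\cdots)$ when $\nu\neq\nu_c$ and a $1/3$-type singularity when $\nu=\nu_c$, and moreover $\alpha(\nu)=U(t_\nu)<1/2$, so that $(1-2U)^{-1}$ shares the same unique dominant singularity and the same type of singular expansion. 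Everything will follow by composing these expansions with the rational functions $R_p^\nu(U)/(1-2U)^{3p}$.

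First I would recall from Proposition~\ref{prop:expRatM} that $t^{3p}\cdot t^p Z_p^+ = R_p^\nu(U)/(1-2U)^{3p}$, so that $t^p Z_p^+$, viewed as a series in $t^3$, is a rational function of $U$, hence algebraic over $\mathbb{Q}(\nu,t)$; this gives algebraicity in the sense of item~1 of the definition of Ising-algebraic. Next I would argue that the only possible singularities of this rational function of $U$ in the relevant domain are the singularities of $U$ itself together with the zeros of $1-2U$: since $\alpha(\nu)=U(t_\nu)<1/2$ and $U$ has nonnegative coefficients, $1-2U$ does not vanish on the closed disk of radius $t_\nu^3$ (in the $t^3$ variable), so no spurious singularity is created, and $\rho_\nu$ remains the unique dominant singularity. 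One should also check that $R_p^\nu(\alpha(\nu))\neq 0$, or more precisely that the leading singular coefficient $C_\omega(\nu)$ produced by substitution is nonzero; this is where a small computation (or a positivity/combinatorial argument, since $Z_p^+$ has a genuine $R^{4}$-type growth) is needed, and it is carried out in the companion Maple file. Finally, substituting the square-root (resp. $1/3$) expansion of $U$ and of $(1-2U)^{-1}$ into $R_p^\nu(U)/(1-2U)^{3p}$ and expanding yields precisely the singular behavior required in item~2: an exponent $3/2$ term for $\nu\neq\nu_c$ and an exponent $4/3$ term at $\nu=\nu_c$, because $(1-t^3/\rho_\nu)^{1/2}$ composed with... more precisely, the half-integer power of $(1-t^3/\rho_\nu)^{1/2}$ that survives is the cube, giving $(1-t^3/\rho_\nu)^{3/2}$, and at $\nu_c$ the relevant surviving power of the $1/3$-singularity is the fourth, giving $(1-t^3/\rho_{\nu_c})^{4/3}$.

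The main obstacle, as in Proposition~\ref{prop:asyQ1}, is purely bookkeeping: one must be sure that after composing and expanding, the coefficient of the $(1-t^3/\rho_\nu)^{3/2}$ (resp. $(1-t^3/\rho_{\nu_c})^{4/3}$) term does not vanish, i.e.\ that $C_\omega(\nu)\neq 0$ for all $\nu>0$, and that no lower-order fractional power appears. Since $R_p^\nu$ is explicit, this is a finite check in terms of $\nu$, handled in~\cite{Maple}; conceptually it is guaranteed because $Z_p^+$ enumerates genuine families of maps whose counting sequence has the universal $n^{-5/2}$ (resp. $n^{-7/3}$) polynomial correction, which forces a nonzero singular coefficient. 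Given all this, the statement follows immediately, exactly along the lines of Proposition~\ref{prop:asyQ1}, and one concludes that each $Z_p^+$ is Ising-algebraic.
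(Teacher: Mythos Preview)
Your proposal is correct and follows exactly the approach the paper indicates: the paper's entire proof is the single sentence ``following the exact same chain of arguments as in the proof of Proposition~\ref{prop:asyQ1}'', and you have simply spelled out what that chain of arguments is, using the rational parametrization $t^{3p}\cdot t^pZ_p^+=R_p^\nu(U)/(1-2U)^{3p}$ from Proposition~\ref{prop:expRatM} together with the already-analyzed singular structure of $U$ and the fact that $U(t_\nu)<1/2$ keeps $1-2U$ from vanishing on the closed disk. The one point that deserves slightly more care than you give it is why no $x^{1/2}$ (resp.\ $x^{1/3}$, $x^{2/3}$) term survives---your ``conceptual'' justification is circular---but this is precisely what Lemma~\ref{le:combi} (or the inductive recursion in the proof of Proposition~\ref{prop:expRatM}, starting from the verified base cases $Q_1$, $Q_3$) supplies, and the paper is equally terse on this point.
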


\subsubsection{Triangulations of the $p-$gon with arbitrary fixed boundary condition}\label{sub:anycondition}
Our starting point is the standard root-edge deletion equation for
triangulations of a $p$-gon with a given boundary word.
\begin{prop}\label{prop:PeelSimple}
Let $\omega = \omega_1 \cdots  \omega_k$ be a non-empty word on $\{\ps,\ns\}$ and let $a,b$ be in $\{\ps,\ns\}$, then we have:
\begin{equation}\label{eq:Zomega}
Z_{ab\omega} = \frac{\nu^{\delta_{a=b}}t}{1-2\nu^{\delta_{a=b}}(tZ_{\ps})}\left(Z_{a\ps b\omega}+Z_{a\ns b\omega}
+
\sum_{i=1}^k Z_{\omega_i b \omega_1 \cdots \omega_{i-1}}Z_{a\omega_i \omega_{i+1} \cdots \omega_k}\right).
\end{equation}
\end{prop}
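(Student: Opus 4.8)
The statement is a root-edge decomposition, so the plan is to peel (remove) the root edge of a triangulation counted by $Z_{ab\omega}$ and classify the resulting configurations. Fix a triangulation $T$ of the $(|\omega|+2)$-gon whose boundary word, read from the target of the root edge, is $ab\omega$; here $a$ is the spin of the root vertex's successor along the boundary and $b$ the next one, while $\omega$ spells the remaining boundary vertices. The root face is a triangle incident to the root edge; let $x$ be its third vertex. Removing the root edge merges the root face with the outer face, and the behaviour splits according to whether $x$ is a \emph{new} interior vertex or one of the existing boundary vertices.

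First I would treat the case where $x$ is a fresh vertex: then the root face disappears and we are left with a single triangulation of a $(|\omega|+3)$-gon with boundary word $a\ps b\omega$ or $a\ns b\omega$ depending on the spin of $x$, contributing $t(Z_{a\ps b\omega}+Z_{a\ns b\omega})$; the factor $t$ accounts for the removed edge, and the weight $\nu^{\delta_{a=b}}$ in front is needed because the root edge, joining the root vertex (of spin $a$, by the convention that the last letter of $ab\omega$ equals the spin of the root vertex --- or rather, one must be careful: the root edge joins the root vertex and its target, and in $Z_{ab\omega}$ the root vertex carries the final spin; one checks that the monochromaticity of the removed root edge is governed precisely by $\delta_{a=b}$ after tracking the conventions). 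The second case is when $x$ already lies on the boundary: then removing the root edge together with the (now exterior) triangle splits $T$ into two triangulations glued at $x$, one carrying the boundary arc from the target through $x$ and one the arc from $x$ back to the root vertex; writing $\omega=\omega' c\omega''$ where $c$ is the spin of $x$, these two pieces are counted by $Z_{cb\omega'}$ and $Z_{ac\omega''}$, giving the sum $\sum_{\omega=\omega'c\omega''}Z_{cb\omega'}Z_{ac\omega''}$. Finally, the root vertex itself can play the role of $x$ (the root face is then bounded by the root edge, a boundary edge, and a loop at the root vertex, or a degenerate configuration); these ``collision with the root vertex'' contributions produce exactly the term $2\nu^{\delta_{a=b}}(tZ_{\ps})Z_{ab\omega}$ that, moved to the left-hand side, yields the denominator $1-2\nu^{\delta_{a=b}}(tZ_{\ps})$ after factoring (the $2$ and the use of $Z_\ps$ rather than a spin-dependent series come from $Z_\ps=Z_\ns$ and the two possible spins at the attaching vertex, exactly as in the sphere computation preceding this section).

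The main obstacle is bookkeeping rather than conceptual: one must be scrupulous about (i) which boundary vertex carries which letter of $ab\omega$ and how the root-edge orientation interacts with this, so that the monochromatic-edge weight of the \emph{removed} root edge comes out as the single factor $\nu^{\delta_{a=b}}$ and is not double-counted in the pieces; and (ii) isolating the self-intersection terms (when the peeled triangle closes back onto the root vertex) and checking they resum geometrically into $\big(1-2\nu^{\delta_{a=b}}tZ_\ps\big)^{-1}$ rather than something with the wrong spin structure. Once these conventions are pinned down, equation~\eqref{eq:Zomega} follows by translating the case analysis into generating functions and solving the resulting affine equation for $Z_{ab\omega}$.
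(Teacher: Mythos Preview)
Your approach is the same as the paper's: both argue by root-edge deletion, classifying the position of the third vertex $x$ of the inner face adjacent to the root edge, and then solving an affine equation for $Z_{ab\omega}$. The paper's intermediate equation is
\[
Z_{ab\omega}=\nu^{\delta_{a=b}}t\left(Z_{a\ps b\omega}+Z_{a\ns b\omega}
+Z_{ab\omega}\cdot(Z_{a}+Z_{b})+
\sum_{\omega=\omega'c\omega''}Z_{cb\omega'}Z_{ac\omega''}\right),
\]
after which one uses $Z_a=Z_b=Z_\ps$ and solves.

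Your new-vertex and boundary-vertex cases are correct, but your explanation of the ``self-loop'' term is not. You only allow $x$ to coincide with the \emph{root vertex}, and then attribute the factor $2$ to ``the two possible spins at the attaching vertex''. That cannot be right: the attaching vertex is a boundary vertex with a \emph{fixed} spin, so there is no spin to choose there. The factor $2$ actually comes from the fact that $x$ can coincide with \emph{either} endpoint of the root edge --- the one carrying spin $a$ or the one carrying spin $b$. Each such coincidence creates a loop at that endpoint enclosing a triangulation of the $1$-gon, while the complement is still a triangulation of the original polygon with boundary word $ab\omega$; this yields the two contributions $Z_a\cdot Z_{ab\omega}$ and $Z_b\cdot Z_{ab\omega}$. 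Only then does the symmetry $Z_a+Z_b=2Z_\ps$ produce the $2$. Once you correct this case, the rest of your plan goes through as written.
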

\begin{figure}[t]
\begin{center}
\includegraphics[width=0.9\textwidth]{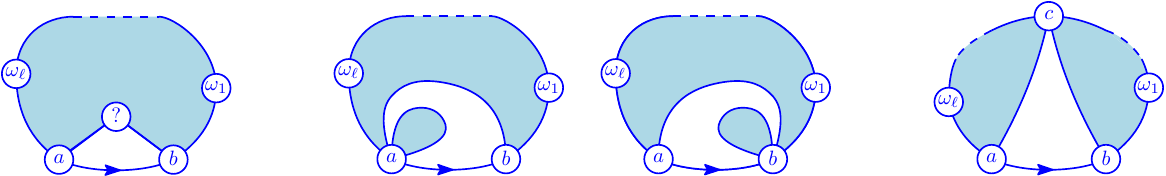}
\caption{\label{fig:RootDeletion}The 4 possible configurations for the inner face incident to the root-edge.}
\end{center}
\end{figure}
\begin{proof}
Let $T$ be an element of $\mathcal{T}_f^{b\omega a}$. Figure~\ref{fig:RootDeletion} illustrates the 4 possibilities for the configuration of the inner face incident to the root-edge of $T$ (i.e. the edge between the spins $a$ and $b$ by our rooting convention). The deletion of the root edge of $T$ translates into the following equation for the corresponding generating series:
\begin{equation}\label{eq:Zomeganum}
Z_{ab\omega}=\nu^{\delta_{a=b}}t\left(Z_{a\ps b\omega}+Z_{a\ns b\omega}
+Z_{ab\omega}\cdot(Z_{a}+Z_{b})+
\sum_{i=1}^k Z_{\omega_i b \omega_1 \cdots \omega_{i-1}}Z_{a\omega_i \omega_{i+1} \cdots \omega_k}\right),
\end{equation}
which yields~\eqref{eq:Zomega}.
\end{proof}

We can finally prove our main algebraicity result:
\begin{proof}[Proof of Theorem~\ref{theo:combimain}]
Fix $\nu >0$. The series $Z_\omega$ have a cyclic symmetry with respect to $\omega$. Indeed, if $\omega'$ is a cyclic permutation of $\omega$, we have clearly $Z_\omega = Z_{\omega'}$. In addidtion, if $\overline{\omega}$ denotes the image of $\omega$ under the involution that exchanges $\ominus$ and $\oplus$, we also have $Z_\omega = Z_{\overline{\omega}}$.
For every nonempty word $\omega$ on $\{\ps,\ns\}$, we denote by $|\omega|_\ominus$ the number of times $\ominus$ appears in $\omega$.
%
%
%
%
We proceed by induction on $(|\omega|,|\omega|_\ominus)$ in lexicographic order. By Corollary~\ref{co:ZplusSingular}, we know that the result holds when $|\omega|_\ominus = 0$.  Equations \eqref{eq:Z3moins} and \eqref{eq:Z2moins} and the invariance of $Z_\omega$ under cyclic permutations or spin inversion ensure that the statement is true if $|\omega| \leq 3$.
From \eqref{eq:Zomeganum}, we see that for every non empty word $\omega$ and any $a,b\in\{\ps,\ns\}$, we can express $Z_{a\ns b \omega}$ as a linear combination of products of some $Z_{\omega'}$, where either $|\omega'|<| a\ns b \omega|$ or $|\omega'|_{\ns}<|a\ns b \omega|_\ns$ (or both). It implies that the singularities of $Z_{a\ns b \omega}$ are included in the set of singularities of those $Z_{\omega'}$. By the induction hypothesis and Lemma~\ref{le:combi}, it concludes the proof of the the Ising algebraicity of $Z_{\omega}$ for every fixed $\omega$.
The asymptotic behavior of the coefficients of the series then follow directly by applying Proposition~\ref{prop:dvIsing}.
\end{proof}

\section{Local weak convergence of large triangulations}

\subsection{Local weak topology and tightness of the root degree}

To prove Theorem \ref{th:localCV}, we will first prove that the sequence of probability measures $\{ \mathbb P_n^\nu \}$ is tight for the topology of local convergence. 
Fix $(l_r)_{r\geq 1}$ a sequence of positive real numbers, denote $K_{(l_r)}$ the subset of $\mathcal T$ defined by: 
\[
	T\in K_{(l_r)} \text{ if and only if } \forall r\geq 1,\ \max_{v\in B_{r}(T)} \deg(v) \leq l_r.
\]
Then, $K_{l_r}$ is a compact subset of $(\mathcal{T},d_{loc})$.
To prove tightness, we will therefore prove that for every $r$ the maximum degree $L_r$ in a ball of radius $r$ for a random triangulation with law $\mathbb P_n^\nu$ is tight with respect to $n$. First, let us do so for the root vertex degree.

\begin{lemm} \label{lem:root-tight}
Let $X_n$ be the degree of the root vertex under $\mathbb P_n^\nu$. The sequence of random variables $(X_n)_{n\geq 1}$ is tight.
\end{lemm}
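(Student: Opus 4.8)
The strategy is to bound, uniformly in $n$, the probability that the root vertex has degree $\geq k$, showing this tends to $0$ as $k \to \infty$. The key is a combinatorial surgery: if the root vertex $v$ has degree $k$ in $(T,\sigma)$, the $k$ triangles incident to $v$ form a ``fan'' around $v$, and one can detach this fan and glue the boundary back together to produce a triangulation with strictly fewer edges (roughly $3n - k$ or $3n - 2k$ edges, depending on how the identification is done) together with a bounded amount of extra combinatorial data recording how to reconstruct the fan (the spins on the boundary, which among a bounded number of configurations occurred, and possibly a marked edge). Crucially this operation is at most a bounded-to-one map and it changes the energy $m(T,\sigma)$ in a controlled way — each monochromatic edge destroyed or created is among the $O(k)$ edges incident to $v$, so the ratio of $\nu$-weights is at most $\max(\nu,\nu^{-1})^{O(k)}$.

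Concretely, I would fix $\nu$ and write $\mathbb P_n^\nu(X_n \geq k) = \big(\sum_{j \geq k} N_{n,j}\big) / \mathcal Z_n$, where $N_{n,j}$ is the total $\nu^{m}$-weight of pairs $(T,\sigma)$ with $3n$ edges and root degree $j$, and $\mathcal Z_n = [t^{3n}]\mathcal Z(\nu,t)$. The surgery above yields a bound of the form $N_{n,j} \leq C\, j\, c^{j}\, \mathcal Z_{n-j}$ (or $\mathcal Z_{n - \lfloor j/2\rfloor}$) for an explicit constant $c = c(\nu)$, where the factor $j$ counts a marked edge and $c^j$ absorbs the spin reconstruction data and the energy distortion. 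Then I invoke the ratio asymptotics from Theorem~\ref{theo:combimain}: since $[t^{3m}]\mathcal Z(\nu,t) \sim \kappa(\nu)\, t_\nu^{-3m} m^{-\alpha}$ with $\alpha \in \{5/2, 7/3\}$, the ratio $\mathcal Z_{n-j}/\mathcal Z_n$ is, uniformly for $j \leq n/2$ say, at most $C' \, t_\nu^{3j}$ (times a harmless polynomial factor in $j$ which is swallowed). Summing, $\mathbb P_n^\nu(X_n \geq k) \leq C'' \sum_{j \geq k} j\, (c\, t_\nu^{3})^{j} + (\text{contribution of } j > n/2)$, and the geometric tail goes to $0$ as $k \to \infty$ provided $c\, t_\nu^{3} < 1$; the large-$j$ contribution is handled by a cruder a priori bound (e.g. $N_{n,j}$ for $j$ large is exponentially small since such configurations are rare, or simply note $j \leq $ total degree $\leq 2 \cdot 3n$ and the same surgery with $j > n/2$ still beats the partition function).

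The main obstacle is verifying that $c(\nu)\, t_\nu^{3} < 1$, i.e. that the per-unit-edge entropy cost of the surgery genuinely dominates the exponential growth rate $t_\nu^{-3}$ of the partition function together with the energy-distortion factor $\max(\nu,\nu^{-1})^{O(1)}$ per removed vertex. This requires being careful about the exact surgery (how many edges are removed per unit of boundary length, and exactly how much reconstruction data is needed) so that the estimate is not merely qualitative; one wants the cleanest possible gluing so that $c$ is as small as possible. A secondary technical point is making the ``bounded-to-one'' claim precise and uniform in $n$ and $j$, and ensuring the marked-edge / spin-word bookkeeping is genuinely only polynomial times $c^j$. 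I expect the authors to carry this out with a specific peeling-style decomposition of the fan around the root, and the inequality $c\, t_\nu^3 < 1$ to follow either from an explicit elementary estimate or, at worst, from the numerical value of $t_\nu^3$ recorded in the definition of Ising-algebraic series.
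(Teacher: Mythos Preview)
Your approach aims for exponential tails on the root degree, which is stronger than tightness; but the decisive step --- the inequality $c(\nu)\,t_\nu^{3}<1$ --- is left unverified, and in fact the authors state explicitly in the remark following the lemma that they were \emph{unable} to obtain exponential tails for every value of $\nu$ (their later recurrence argument in Section~5 does verify an analogous inequality, but only for $\nu$ in a bounded interval around $\nu_c$, via a numerical spectral-radius computation). There is also a subsidiary problem with your surgery: ``gluing the boundary back together'' to produce a \emph{sphere} triangulation of size roughly $3(n-j)$ is ill-defined when the boundary vertices carry arbitrary spins, since identified vertices must share a spin; the natural target of the surgery is rather the family of triangulations of the $k$-gon, and then the bound involves $\sum_{|\omega|=k}\kappa_\omega$ rather than a single partition-function ratio. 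Controlling the growth of these constants in $k$, together with the factor $\max(\nu,\nu^{-1})^{k}$ from the energy distortion, is exactly the exponential race that one cannot win for all $\nu$ without further input.

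The paper's proof is entirely different and avoids any constant-checking. It is a first-moment argument: add to the random triangulation a uniformly chosen marked edge. The probability that the marked edge is incident to the root vertex $\delta$ is at least $\mathbb E[\deg(\delta)]/(6n)$ (condition on $\deg(\delta)$; each incident edge has probability at least $1/(2\cdot 3n)$ of being the mark). On the other hand, cutting open both the root edge and the marked edge yields an injection into triangulations of the $4$-gon, with a handful of degenerate cases producing pairs or triples of triangulations of smaller polygons when one or both of the two edges is a loop; the asymptotics of Theorem~\ref{theo:combimain} then show this probability is $O(1/n)$. Combining the two bounds gives $\sup_n\mathbb E_n^\nu[\deg(\delta)]<\infty$, hence tightness, and the argument works for every $\nu>0$ with no numerical verification at all.
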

\begin{rema}
In Section \ref{sec:rootDegree}, we will prove that the limiting distribution of the $X_n$'s has exponential tails for $\nu=\nu_c$ (and the proof works in fact for $\nu$ close enough to $\nu_c$, see the remark following Proposition~\ref{prop:BoltzDegTail}). It may be possible to extend this statement to every $X_n$, with a uniform exponential upper bound for the tails. This is usually the approach to prove tightness results in random maps (see for example Lemmas 4.1 and 4.2 in \cite{AngelSchramm}).

It turns out that things become fairly technical in our setting and we are still unable to prove exponential tails for every value of $\nu$. However, though a much weaker statement, Lemma \ref{lem:root-tight} is sufficient to prove tightness and moreover has a very simple and robust proof that we were not able to find in the literature.
\end{rema}
\begin{proof}[Proof of Lemma \ref{lem:root-tight}]
Fix $n \geq 1$ and $\nu >0$. To simplify notation, we write $\mathbb P$ instead of $\mathbb P_n ^\nu$. We define $\overline{\mathbb P}$ as a random triangulation distributed according  law $\mathbb P$ with a marked uniform edge. That is, for any triangulation of the sphere $T$ with $3n$ edges and any edge $e$ of $T$, we set
\[
\overline{\mathbb P} (T,e) = \frac{\nu^{m(T)}}{3n \cdot [t^{3n}] \gtr (\nu,t)}.
\]
Denote $\delta$ the root vertex and $e$ the marked edge of a triangulation sampled according to $\overline{\mathbb P}$. We have:
\begin{align}
\overline{\mathbb P} \left( \delta \in e \right) & = \sum_{k=1}^{3n}  \overline{\mathbb P} \left( \delta \in e \middle| \mathrm{deg}(\delta) = k \right) \cdot \overline{\mathbb P} \left( \mathrm{deg}(\delta) = k \right) \notag \\
& \geq \sum_{k=1}^{3n} \frac{k}{2 \cdot 3n} \overline{\mathbb P} \left( \mathrm{deg}(\delta) = k \right) \notag\\
& = \frac{1}{6n} \overline{\mathbb E} \left[ \mathrm{deg}(\delta) \right] = \frac{1}{6n} {\mathbb E} \left[ \mathrm{deg}(\delta) \right] \label{eq:tightness}
\end{align}
where we used the fact that an edge adjacent to the root vertex can contribute to its degree by at most $2$.
\begin{figure}[!ht]
\begin{center}
\includegraphics[width=0.9\textwidth]{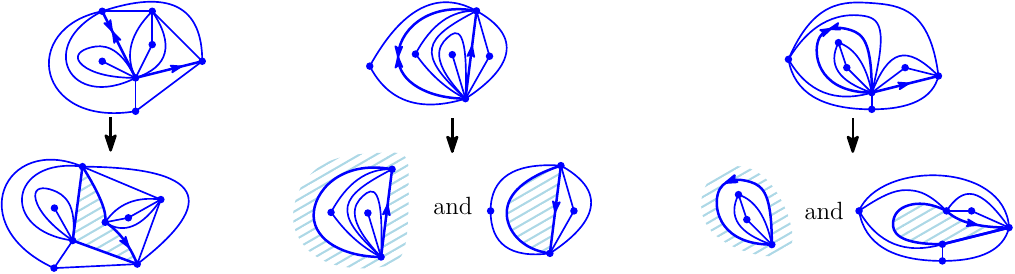}
\caption{\label{fig:OpenMarkedEdge}Some of the possibilities that may arise in the course of the proof of Lemma~\ref{lem:root-tight}. The 2 possibilities when neither the marked edge nor the root edge are loops (left and middle) and the case where only one of them is a loop (right). The double arrow indicates the marked edge and the $p$-gons are dashed.}
\end{center}
\end{figure}
Now, by duplicating and opening the marked edge and the root edge (see Figure \ref{fig:OpenMarkedEdge}), we can see that there is an injection from the set of triangulations of size $3n$ with a marked edge adjacent to the root vertex into triangulations with no marked edges. More precisely, we have the following cases when cutting along the two edges:
\begin{itemize}
	\item Both edges are not loops. We get either a triangulation of the $4$-gon or a pair of triangulations of the $2$-gon if the edges have the same endpoints.
	\item Both edges are loops. We get either a pair of triangulations of the $1$-gon if the marked edge is the root edge or a triplet of triangulations otherwise (two of the $1$-gon and one of the $2$-gon).
	\item One edge is a loop and not the other. We get a pair of triangulations, one of the $1$-gon and one of the $3$-gon.
\end{itemize}
Therefore, taking into account every case and the possible creation of new monochromatic edges, we have, 
\[
\overline{\mathbb P} \left( \delta \in e \right) \leq \max \left\{ \frac{1}{\nu},1 \right\}^2 \frac{[t^{3n+2}] (Z_4 + Z_2^2 + Z_1^2 + Z_1^2 Z_2 + Z_1 Z_3)}{3n \, [t^{3n}] \mathcal Z} = \mathcal O (1/n)
\]
from Theorem~\ref{theo:combimain}.

Together with equation \eqref{eq:tightness}, this yields that $\mathbb E \left[ \mathrm{deg}(\delta) \right]$ is bounded with $n$ giving the tightness of the sequence of random variables.
\end{proof}

To go from the tightness of the root degree to the tightness of the maximal degree in balls, we need some sort of invariance of the root degree by re-rooting. We will see in the next section that in fact, the distribution of the maps themselves are invariant under rerooting along a simple random walk, which is more than we need (see Lemma \ref{lem:invrootRW}).

\subsection{Invariance along a simple random walk and tightness}
\label{sec:invRW}

To formally introduce an invariance property by rerooting, we need some additional notation. Let $T$ be a rooted triangulation with spins (finite or infinite) and denote by $e_0$ the oriented root edge. A simple random walk on $T$ is an infinite random sequence $(e_0,e_1, \ldots )$ of oriented edges of $T$ defined recursively as follows. Conditionally given $(e_i, 0 \leq i \leq k)$, we let $e_{k+1}$ be an oriented edge whose origin is the endpoint $e_k^+$ of $e_k$ and whose endpoint is chosen uniformly among the $\mathrm{deg} (e_k^+)$ possible choices. We call the sequence $(e_0,e_1, \ldots )$ a simple random walk on $T$ started at the root edge, and denote by $P_T$ its law. Finally, if $e$ is an oriented edge of $T$, we denote by $T^{(e)}$ the triangulation $T$ re-rooted at $e$.

If $\lambda$ is a probability distribution on $\mathcal T$, we denote by $\Theta ^{(k)} (\lambda)$ the distribution of a random triangulation sampled according to $\lambda$ and re-rooted at the $k$-th step of a simple random walk
It is defined by
\[
\Theta ^{(k)} (\lambda) (A) = \int_{\mathcal T} \lambda (\mathrm d T) \,
\int P_T ( \mathrm d (e_0,e_1,\ldots)) \, \mathbf{1}_{\{ T^{(e_k)} \in A\}}
\]
for every Borel subset $A$ of $\mathcal T$. We invite the interested reader to check the work of Aldous and Lyons \cite{AL} where this framework is introduced for any unimodular measure (see also \cite{BC,Curien, CMM} for related discussions specific to random maps).

The following lemma is an easy adaptation of \cite[Proposition 19]{CMM} and its proof is \emph{mutatis mutandis} the same. See also \cite[Theorem 3.2]{AngelSchramm} for an analogous statement with a slightly different proof. We insist on the fact that this result holds independently of the tightness or convergence of the measures $\mathbb P_n^\nu$.

\begin{lemm} \label{lem:invrootRW}
The laws $\mathbb P_n^\nu$ and any of their subsequential limit $\mathbb P^\nu$ are invariant under re-rooting along a simple random walk in the sense that for every $k \geq 0$ and any $n \in \mathbb{N}\cup \{\infty\}$ we have $\Theta ^{(k)} (\mathbb P_n^\nu) = \mathbb P_n^\nu$.
\end{lemm}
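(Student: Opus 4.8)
The plan is to prove the statement first for finite $n$ by an explicit combinatorial bijection, and then pass to subsequential limits by weak convergence of finitely supported functionals. For fixed $n$, recall that $\mathbb P_n^\nu$ puts mass proportional to $\nu^{m(T,\sigma)}$ on each rooted triangulation with spins $(T,\sigma)$ of size $3n$. The key observation is that the quantity $\nu^{m(T,\sigma)}$ depends only on the underlying \emph{unrooted} spinned map, so the weight is invariant under any change of root edge. It therefore suffices to show that the \emph{combinatorial} operation ``re-root along one step of the simple random walk'' preserves the measure $\mathbb P_n^\nu$. It is enough to treat $k=1$, since $\Theta^{(k)}$ is the $k$-fold composition of $\Theta^{(1)}$ and the latter commutes with itself in the obvious way. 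Concretely, $\Theta^{(1)}(\mathbb P_n^\nu)$ assigns to a spinned triangulation $(T,\sigma)$ the weight
\[
\sum_{(T',\sigma')} \mathbb P_n^\nu(T',\sigma') \sum_{e_1 : (T')^{(e_1)} = (T,\sigma)} \frac{1}{\deg\big((e_0')^+\big)},
\]
where the inner sum is over oriented edges $e_1$ of $T'$ incident to the root vertex of $T'$ such that re-rooting at $e_1$ gives back $(T,\sigma)$; one must check this equals $\mathbb P_n^\nu(T,\sigma)$.

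First I would set up the bookkeeping carefully. Consider the set of pairs $\big((T,\sigma), (e,e')\big)$ where $(T,\sigma)$ is a spinned triangulation of size $3n$, $e$ is its root edge, and $e'$ is an oriented edge sharing its origin with the head of $e$ (i.e.\ $(e')^- = e^+$); weight such a pair by $\nu^{m(T,\sigma)}/\deg(e^+)$, normalised by $[t^{3n}]\mathcal Z(\nu,t)$. Summing over the choice of $e'$ returns $\mathbb P_n^\nu$ on $(T,\sigma)$ — this is the source measure of one SRW step. Now consider the involution $\Phi$ on such pairs that reverses the step: $\Phi\big((T,\sigma),(e,e')\big) = \big((T,\sigma)^{(e')}, (e', \bar e)\big)$ where $\bar e$ is $e$ with reversed orientation; note that $(e')^+ = e^+$ becomes the new root vertex and $\bar e$ has origin $e^+$, so the pair is admissible, and $\Phi$ is clearly an involution since applying it twice restores the original root and reverses orientations back. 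The weight is preserved because $\nu^{m(\cdot)}$ only sees the unrooted spinned map and $\deg(e^+)$ is the same vertex-degree on both sides. Pushing the source measure through $\Phi$ and then marginalising out the second coordinate $e'$ of the image yields exactly $\mathbb P_n^\nu$ again. This is precisely the content of $\Theta^{(1)}(\mathbb P_n^\nu) = \mathbb P_n^\nu$, and iterating gives all $k$.

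For the infinite case, let $\mathbb P_\infty^\nu$ be any subsequential weak limit of $(\mathbb P_{n_j}^\nu)$. Fix $k$ and a bounded continuous functional $F$ on $\mathcal T$ that is \emph{local}, i.e.\ depends only on $B_R(T,\sigma)$ for some $R$; such functionals are dense enough to determine the measure. The map $(T,\sigma) \mapsto \int P_T(\mathrm d(e_0,e_1,\ldots))\, F\big((T,\sigma)^{(e_k)}\big)$ is itself a bounded functional, and one checks it is continuous for $d_{\mathrm{loc}}$: the law of $(e_0,\ldots,e_k)$ and the map $B_R\big((T,\sigma)^{(e_k)}\big)$ depend only on $B_{R+k}(T,\sigma)$, since a $k$-step walk travels distance at most $k$ from the root. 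Hence $\langle \Theta^{(k)}(\mathbb P_{n_j}^\nu), F\rangle \to \langle \Theta^{(k)}(\mathbb P_\infty^\nu), F\rangle$. But the left side equals $\langle \mathbb P_{n_j}^\nu, F\rangle$ by the finite-$n$ result, which converges to $\langle \mathbb P_\infty^\nu, F\rangle$. Therefore $\langle \Theta^{(k)}(\mathbb P_\infty^\nu), F\rangle = \langle \mathbb P_\infty^\nu, F\rangle$ for all local $F$, whence $\Theta^{(k)}(\mathbb P_\infty^\nu) = \mathbb P_\infty^\nu$.

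The main obstacle is purely notational rather than conceptual: one must handle with care the degenerate configurations — loops at the root vertex, the marked step $e'$ coinciding with $\bar e$, multiple edges between the root vertex and a neighbour — to be sure the involution $\Phi$ is genuinely well-defined and weight-preserving in every case (in particular that $\deg(e^+)$ is unambiguous and that re-rooting is a bijection on the relevant set of oriented edges). Since the argument is, as the authors note, \emph{mutatis mutandis} that of Proposition~19 of \cite{CMM} — only the extra spin decoration, which is inert under re-rooting, is new — I would state the finite-$n$ identity via this involution, remark that the verification of the degenerate cases is routine, and spend the written proof mostly on the continuity argument needed for the passage to the limit.
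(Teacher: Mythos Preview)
Your overall strategy is exactly what the paper has in mind (it simply cites \cite{CMM}), and the continuity argument for passing to subsequential limits is clean and correct.

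There is, however, a genuine slip in the involution. With the conventions in force---$(e')^- = e^+$ and root vertex $=$ tail of the root edge---your map $\Phi\big((T,\sigma),(e,e')\big) = \big((T,\sigma)^{(e')}, (e', \bar e)\big)$ does \emph{not} land in the set of admissible pairs: the new root edge is $e'$, so the second coordinate must have origin $(e')^+$, whereas $(\bar e)^- = e^+ = (e')^-$. (Your parenthetical ``$(e')^+ = e^+$ becomes the new root vertex'' already signals the index confusion.) The standard reversibility involution is $\Phi\big((T,\sigma),(e,e')\big) = \big((T,\sigma)^{(\bar e')}, (\bar e', \bar e)\big)$: now $(\bar e')^+ = (e')^- = e^+ = (\bar e)^-$, the pivot degree $\deg(e^+)$ is unchanged, and $\Phi^2 = \mathrm{id}$. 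Alternatively, and perhaps more transparently, you can bypass the involution entirely: for a fixed target $(T,e')$, the preimages under one SRW step are the pairs $((T,e_0),e')$ with $e_0^+ = (e')^-$; there are $\deg((e')^-)$ of them, each contributes $\mathbb P_n^\nu(T,e_0)/\deg((e')^-)$, and root-independence of the weight gives $\mathbb P_n^\nu(T,e')$ directly. Either fix repairs the finite-$n$ step; the rest of your plan stands.
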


\begin{proof}
See the proof of \cite[Proposition 19]{CMM}, which carries word for word in our setting.
\end{proof}

\begin{prop} \label{prop:tightness}
The family of probability measures $\{ \mathbb P_n^\nu \}$ is tight.
\end{prop}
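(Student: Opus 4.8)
The plan is to deduce tightness from the compactness criterion stated just before Lemma~\ref{lem:root-tight}: the set $K_{(l_r)}$ is compact in $(\mathcal T,d_{\mathrm{loc}})$, so by Prohorov's theorem it suffices to show that for every $\varepsilon>0$ one can choose a sequence $(l_r)_{r\geq 1}$ such that $\mathbb P_n^\nu(K_{(l_r)})\geq 1-\varepsilon$ for all $n$. Since $K_{(l_r)}^c=\bigcup_{r\geq1}\{\exists v\in B_r(T):\deg(v)>l_r\}$, it is enough to prove that for each fixed $r$ the random variable $L_r:=\max_{v\in B_r(T)}\deg(v)$ is tight with respect to $n$; then one picks $l_r$ large enough that $\mathbb P_n^\nu(L_r>l_r)\leq \varepsilon 2^{-r}$ uniformly in $n$ and sums up.

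To get tightness of $L_r$ for fixed $r$, I would first reduce to the tightness of a \emph{single} vertex degree via the re-rooting invariance of Lemma~\ref{lem:invrootRW}. The ball $B_r(T)$ contains at most $|B_r(T)|$ vertices, and for a triangulation the volume of a ball of radius $r$ is controlled by the degrees encountered along the way; more simply, one can bound $\mathbb P_n^\nu(L_r>\ell)$ by a sum over vertices $v$ at distance $\leq r$ of $\mathbb P_n^\nu(\deg(v)>\ell)$, and use that under re-rooting along the simple random walk the law is invariant, so the ``typical'' vertex at bounded graph distance has the same degree law as the root vertex up to a reweighting by the number of ways to reach it. Concretely: run the simple random walk for $k\leq r$ steps; by Lemma~\ref{lem:invrootRW}, $\Theta^{(k)}(\mathbb P_n^\nu)=\mathbb P_n^\nu$, so the degree of $e_k^+$ has the same law as $X_n=\deg(\delta)$, which is tight by Lemma~\ref{lem:root-tight}. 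A union bound over the (at most $r$) endpoints $e_0^+,\dots,e_r^+$ of the walk shows that the maximal degree \emph{along a random walk path of length $r$} is tight. To upgrade this to the maximal degree over the whole ball $B_r(T)$, one observes that from the root one can reach any vertex of $B_r(T)$ by a path of length $\leq r$, and the simple random walk follows any given such path with probability bounded below by $\prod_{i}\deg(\cdot)^{-1}$; combining this with the already-established tightness of all the degrees encountered (a standard absorbing/bootstrapping argument on $r$) yields tightness of $L_r$.

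Once $L_r$ is tight for every fixed $r$, the proof concludes as above: given $\varepsilon>0$, choose $l_r$ with $\sup_n \mathbb P_n^\nu(L_r>l_r)\leq \varepsilon 2^{-r}$, set $K=K_{(l_r)}$, and then $\mathbb P_n^\nu(\mathcal T\setminus K)\leq \sum_{r\geq1}\sup_n\mathbb P_n^\nu(L_r>l_r)\leq\varepsilon$ for all $n$. Since $K$ is compact, $\{\mathbb P_n^\nu\}$ is tight.

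The main obstacle is the bootstrapping step that passes from tightness of the root degree (Lemma~\ref{lem:root-tight}) to tightness of \emph{all} degrees in a ball of radius $r$. The subtlety is that a vertex $v\in B_r(T)$ may be reachable only through a path passing through high-degree vertices, so the random walk reaches it with tiny probability, and the naive re-rooting bound degrades; one must argue inductively on $r$, using that at step $r$ the degrees of all vertices in $B_{r-1}(T)$ are already under control (hence the walk reaches the ``frontier'' with not-too-small probability), and that a vertex at distance exactly $r$ is adjacent to some vertex in $B_{r-1}(T)$ whose degree is controlled, so the walk steps onto $v$ with probability bounded below in terms of quantities that are already tight. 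Making this induction precise — carefully tracking that the conditional law of $\deg(e_k^+)$ given a tight event is still tight — is where the real work lies; everything else is a union bound and an application of the compactness criterion and Prohorov's theorem.
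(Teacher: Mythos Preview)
Your proposal is correct and follows essentially the same route as the paper, which simply invokes the proof of \cite[Lemma~4.4]{AngelSchramm} together with Lemma~\ref{lem:invrootRW}; what you have written is precisely a sketch of that Angel--Schramm argument (tightness of the root degree, re-rooting invariance along the simple random walk, and the induction on $r$ using that the walk follows a fixed path to a vertex of $B_r$ with probability at least $\prod_i \deg(v_i)^{-1}$). Your identification of the bootstrapping step as the crux, and the way you outline it --- controlling degrees in $B_{r-1}$ first so that the walk reaches the frontier with probability bounded below, then applying invariance and a union bound over the at most $\sum_{v\in B_{r-1}}\deg(v)$ vertices at distance $r$ --- matches the original argument exactly.
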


\begin{proof}
The original proof of Angel and Schramm \cite[Lemma 4.4 page 22]{AngelSchramm} can also be adapted effortlessly to our setting with the help of Lemma~\ref{lem:invrootRW} that provides invariance of the laws under $\Theta^{(k)}$ and of Lemma~\ref{lem:root-tight} that provides the tightness of the law of the root degree.
\end{proof}

\subsection{Proof of Theorem~\ref{th:localCV}}

As in the previous section, thanks to the behavior of our generating series, things are not much more complicated than in the uniform setting and we can follow the original approach of Angel and Schramm \cite{AngelSchramm}. Recall the definition of \emph{rigid} triangulations (see \cite[Section 4.2]{AngelSchramm} for details), which are triangulations with holes such that one cannot fill the holes in two different ways to obtain the same triangulation of the sphere. First we show that subsequential limits of the $\mathbb P_n ^\nu$'s share common properties. This Proposition is analogous to \cite[Corollary 3.4 and Proposition 4.10]{AngelSchramm} and the proofs are almost identical, so we only give the main arguments.

\begin{prop} \label{prop:limit}
Every subsequential limit $\mathbb P^\nu$ of $(\mathbb P_n ^\nu)_{n \geq 1}$ has almost surely one end. In addition, for every finite rigid triangulation $\Delta$ with $\ell \geq 1$ holes without common edges and respective boundary conditions given by $\omega^{(1)},\ldots,\omega^{(\ell)} \in \{\ps,\ns\}^+$, we have:
\begin{equation}
\label{eq:limitball}
\mathbb P^\nu \left( \Delta \subset T \right)
= \frac{t_\nu^{|\Delta| - |\omega|} \nu^{m(\Delta) - m(\omega)}}{\kappa} \left( \prod_{j=1}^\ell Z_{\omega^{(j)}}(\nu,t_\nu) \right) \sum_{i=1}^\ell \frac{\kappa_{\omega^{(i)}}}{t_\nu^{|\omega^{(i)}|}Z_{\omega^{(i)}}(\nu,t_\nu)},
\end{equation}
where  $\omega=(\omega^{(1)},\ldots,\omega^{(\ell)})$ and $m(\omega)$ denotes the total number of monochromatic edges of the boundaries $\omega^{(1)} , \ldots , \omega^{(\ell)}$. We recall that the constants $\kappa$ and $\kappa_{\omega^{(i)}}$ are defined in Theorem~\ref{theo:combimain}.

Moreover, the probability that the $i$-th hole contains the infinite part of the triangulation is proportional to the $i$-th term in the sum.
\end{prop}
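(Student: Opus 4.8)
\textbf{Proof plan for Proposition~\ref{prop:limit}.}
The plan is to follow the Angel--Schramm strategy, using our enumerative input (Theorem~\ref{theo:combimain}) in place of their uniform asymptotics. First I would establish formula~\eqref{eq:limitball} along a converging subsequence by a direct counting argument. Fix a finite rigid triangulation $\Delta$ with $\ell$ holes, boundary word $\omega=(\omega_1,\dots,\omega_\ell)$, $|\Delta|$ edges, $|\omega|=\sum|\omega_i|$ boundary edges and $m(\Delta)$ monochromatic edges among which $m(\omega)=\sum m(\omega_i)$ lie on the boundary. For $T$ a triangulation of the sphere with $3n$ edges and a fixed spin configuration, the event $\{\Delta\subset T\}$ means that $T$ is obtained by gluing into the $\ell$ holes of $\Delta$ some triangulations $T_1,\dots,T_\ell$ of the $\omega_j$-gons (rigidity guarantees this decomposition is unique and the glued maps inherit the prescribed boundary spins). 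Since $|T|=|\Delta|-|\omega|+\sum_j|T_j|$ and $m(T,\sigma)=m(\Delta)-m(\omega)+\sum_j m(T_j)$, summing $\nu^{m(T,\sigma)}$ over all such $T$ with $|T|=3n$ gives
\[
\nu^{m(\Delta)-m(\omega)}\,[t^{3n-|\Delta|+|\omega|}]\prod_{j=1}^{\ell} Z_{\omega_j}(\nu,t).
\]
Dividing by $t^{3n}[t^{3n}]\mathcal Z(\nu,t)$ (the normalisation of $\mathbb P_n^\nu$, up to the harmless rooting bookkeeping), the asymptotics of Theorem~\ref{theo:combimain} for the product of the $Z_{\omega_j}$ and for $\mathcal Z$ apply: the dominant contribution comes from configurations where exactly one $T_i$ is ``macroscopic'' (carries a growing number of edges) while the others are fixed, which is precisely the $n^{-5/2}$ (resp.\ $n^{-7/3}$) term produced by one factor being singular at $\rho_\nu$ and the rest being evaluated at $t_\nu$. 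A standard Cauchy-product/transfer computation then yields exactly the right-hand side of~\eqref{eq:limitball}, with the sum over $i$ recording which hole is the large one; this simultaneously identifies the probability that the $i$-th hole contains the macroscopic piece as the $i$-th summand.

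Second, I would promote this to a statement about any subsequential limit $\mathbb P$: since $\{\Delta\subset T\}$ is (for $\Delta$ rigid, no shared edges between holes) a cylinder-type event whose indicator is continuous for $\dloc$ on the (closed) set of triangulations, the portmanteau theorem upgrades the convergence of $\mathbb P_n^\nu(\Delta\subset T)$ to $\mathbb P(\Delta\subset T)$. The one-endedness then follows as in~\cite{AngelSchramm}: if $\mathbb P$ had positive probability of being at least two-ended, one could find with positive probability a rigid $\Delta$ two of whose holes each contain an infinite part; but summing~\eqref{eq:limitball}-type probabilities over the ``two macroscopic holes'' configurations shows these are negligible (they come with an extra $n^{-5/2}$-type decay in the finite-$n$ count, hence vanish in the limit), so $\mathbb P$-a.s.\ exactly one hole of any such $\Delta$ is infinite, which is one-endedness. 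The infinite part lying in hole $i$ with probability equal to the $i$-th term is exactly the refined version of the same computation.

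The main obstacle I expect is bookkeeping rather than conceptual: carefully matching the combinatorial normalisation (the factor $2/t$, the division by $\nu$ for the root edge, and the $t_\nu^{|\Delta|-|\omega|}$ vs.\ $t_\nu^{|\omega_i|}$ exponents) so that the constants $\kappa$ and $\kappa_{\omega_i}$ from Theorem~\ref{theo:combimain} assemble into precisely~\eqref{eq:limitball}, and justifying that in the finite-$n$ sum the cross-terms with two or more growing $T_j$'s are genuinely lower order. The latter is where I would be most careful: it uses that $[t^{3m}]Z_\omega\sim \kappa_\omega t_\nu^{-3m}m^{-5/2}$ (resp.\ $m^{-7/3}$) with a \emph{summable} polynomial correction, so that a convolution of two such sequences decays by an extra factor $n^{-3/2}$ (resp.\ $n^{-4/3}$) relative to a single one; this is the standard singularity-analysis fact that the product of two functions each behaving like $(1-t^3/\rho_\nu)^{3/2}$ behaves like $(1-t^3/\rho_\nu)^{3}$, and it is exactly what makes the ``one macroscopic hole'' picture, and hence one-endedness, work. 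Everything else is a routine adaptation of \cite[Corollary 3.4 and Proposition 4.10]{AngelSchramm}, the only genuinely new ingredient being our Theorem~\ref{theo:combimain} in place of the uniform enumeration.
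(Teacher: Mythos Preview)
Your proposal is correct and follows essentially the same strategy as the paper: both derive \eqref{eq:limitball} by decomposing $\{ \Delta \subset T\}$ into fillings of the holes, applying the asymptotics of Theorem~\ref{theo:combimain} to the resulting convolution, and both obtain one-endedness from the fact that a convolution of two sequences of order $n^{-\alpha}$ (with $\alpha = 5/2$ or $7/3$) is negligible against a single one. The only cosmetic difference is that the paper proves one-endedness first, via the concrete ``separating loop of length $k$'' device of \cite[Lemma~3.3]{AngelSchramm} (bounding $\mathbb P_n^\nu(L(k,A))$ by $O(A^{-\alpha+1})$), whereas you reverse the order and phrase one-endedness in terms of a generic rigid $\Delta$ with two large holes; both routes rest on the same estimate and the same reference.
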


\begin{proof}
First, the one-endedness is an easy adaptation of Lemma 3.3 and Corollary 3.4 of \cite{AngelSchramm}. Indeed, if a subsequential limit has more than one end then, under this law, there exists $k > 0$ and $\varepsilon > 0 $ such that there is a simple cycle with $k$ edges containing the root that separates the triangulation into two infinite parts with probability larger than $\varepsilon$. This in turn means that for any integer $A$ and infinitely many $n$, the probability under $\mathbb P_n^\nu$ to have a simple cycle with $k$ edges containing the root that separates the triangulation into two parts with at least $A$ edges each is larger than say $\varepsilon /2$. Denote by $L(k,A)$ such an event. Its probability under $\mathbb P_n^\nu$ is bounded by
\[
\mathbb P_n^\nu \left( L(k,A) \right) \leq \sum_{|\omega| =k}\frac{1}{[t^{3n}] \mathcal Z (\nu, t)}
\sum_{\substack{3n_1+3n_2 = 3n + k\\3n_1,3n_2 \geq A+k}}
\nu^{-m(\omega)}
[t^{3n_1-k}] \big(Z_\omega (\nu,t)\big) \cdot [t^{3n_2-k}] \big(Z_\omega (\nu,t)\big)
\]
where the first sum is to fix the spins of the loop and the term $\nu^{-m(\omega)}$ is there to avoid counting monochromatic edges of the cycle twice and the number of edges on each side of the cycle is $3n_i -k$ including the boundary. From Theorem~\ref{theo:combimain}, we know that the coefficients in the above identity all share the same asymptotic behavior and we have, if $A$ is large enough and for some constant depending only on $\nu$ and $k$:
\begin{equation} \label{eq:oneend}
\mathbb P_n^\nu \left( L(k,A) \right) \leq \mathrm{Cst}
\sum_{\substack{3n_1+3n_2 = 3n + k\\3n_1,3n_2 \geq A+k}}
\frac{(t_\nu)^{-3n_1} n_1^{-\alpha} (t_\nu)^{-3n_2} n_2^{-\alpha}}{(t_\nu)^{-3n} n^{-\alpha}}
\end{equation}
with $\alpha$ being $5/2$ or $7/3$ depending on $\nu$. A classical analysis of the right hand side of \eqref{eq:oneend} shows that this probability is of order $\mathcal O \left( A^{-\alpha +1} \right)$ and thus goes to $0$, meaning that the triangulation cannot have more than one end.

The second statement is a straightforward computation.
Indeed, by decomposing triangulations $T$ such that $\Delta \subset T$ into $\Delta$ a triangulations with respective boundaries $\omega^{(1)} , \ldots , \omega^{(\ell)}$ and avoiding counting edges on the boundary of $\Delta$ twice we get
\begin{align*}
\mathbb P_n^\nu \left( \Delta \subset T \right)
& = 
\nu^{m(\Delta) - m(\omega)}
\frac{ [t^{3n - |\Delta| + |\omega|}]
\left( \prod_{j=1}^\ell Z_{\omega^{(j)}}(\nu,t) \right)}
{[t^{3n}] \mathcal Z (\nu,t)}\\
& = 
\nu^{m(\Delta) - m(\omega)}
\frac{ [t^{3n - |\Delta| + 2 |\omega|}]
\left( \prod_{j=1}^\ell t^{|\omega^{(j)}|}Z_{\omega^{(j)}}(\nu,t) \right)}
{[t^{3n}] \mathcal Z (\nu,t)}.
\end{align*}
In particular, we used the fact that the holes of $\Delta$ have no common edges. This hypothesis can be avoided by changing the definition of $m(\omega)$ but we prefer not to in order to keep technicalities at a minimum level.

The series $t^{|\omega_i|} Z_{\omega^{(i)}}(\nu,t)$ are all Ising-algebraic and have the following singular expansion at their singularity
\begin{align*}
t^{|\omega^{(i)}|} &Z_{\omega^{(i)}}(\nu,t)\\&= t^{|\omega^{(i)}|}_\nu Z_{\omega^{(i)}}(\nu,t_\nu) + B_{\omega^{(i)}}(\nu) \left( 1 - (t/t_\nu)^3 \right) + C_{\omega^{(i)}}(\nu) \left( 1 - (t/t_\nu)^3 \right)^{\alpha - 1} + o \left( \left( 1 - (t/t_\nu)^3 \right)^{\alpha - 1} \right) 
\end{align*}
with $\alpha(\nu) = 5/2$ or $7/3$ depending on $\nu$. Therefore, the product $\prod_{j=1}^\ell t^{|\omega^{(j)}|}Z_{\omega^{(j)}}(\nu,t)$ is also Ising algebraic with a singular expansion at its unique singularity of the form
\begin{align*}
\prod_{j=1}^\ell & t^{|\omega^{(j)}|}Z_{\omega^{(j)}}(\nu,t) \\
&=
 t^{|\omega|}_\nu \prod_{j=1}^\ell Z_{\omega^{(i)}}(\nu,t_\nu)
 + B_{\omega}(\nu) \left( 1 - (t/t_\nu)^3 \right) + C_{\omega}(\nu) \left( 1 - (t/t_\nu)^3 \right)^{\alpha - 1} + o \left( \left( 1 - (t/t_\nu)^3 \right)^{\alpha - 1} \right)
\end{align*}
with
\[
C_\omega (\nu) = \sum_{i = 1}^\ell C_{\omega^{(i)}}   \prod_{j \neq i} t^{|\omega^{(j)}|}_\nu Z_{\omega^{(j)}}(\nu,t_\nu).
\]
From there, it is straightforward to obtain
\begin{align*}
\mathbb P_n^\nu \left( \Delta \subset T \right)
&\sim_{n \to \infty}
\nu^{m(\Delta) - m(\omega)}
\sum_{i=1}^\ell
\frac{
\left( \prod_{j\neq i} t_\nu^{|\omega^{(j)}|}Z_{\omega^{(j)}}(\nu,t_\nu) \right) \cdot \kappa_{\omega^{(i)}} t_\nu^{-(3n - |\Delta| + 2 |\omega|)} (3n - |\Delta| + 2 |\omega|)^{-\alpha}}
{ \kappa(\nu) t_\nu^{-3n} n^{-\alpha}}\\
\end{align*}
with $\alpha(\nu) = 5/2$ or $7/3$ depending on $\nu$. This in turn yields
\[
\mathbb P_n^\nu \left( \Delta \subset T \right)
\to_{n \to \infty}
t_\nu^{|\Delta| - |\omega|}
\nu^{m(\Delta) - m(\omega)}
\sum_{i=1}^\ell
\frac{
\left( \prod_{j\neq i} Z_{\omega^{(j)}}(\nu,t_\nu) \right) \cdot \kappa_{\omega^{(i)}} t_\nu^{-|\omega^{(i)}|}}
{ \kappa(\nu)}
\]
finishing the proof of \eqref{eq:limitball}.

Fix $i \in \{1 , \ldots , \ell \}$. If  $N_i + N'_i =  3n - |\Delta| + |\omega|$, the probability under $\mathbb P_n^\nu$  that the i-th hole has $N_i$ edges and the other holes have a total of $N'_i$ edges is given by
\[
\frac{
\displaystyle{
[t^{N_i}]  Z_{\omega^{(i)}}(\nu,t) \, 
[t^{N'_i}]
\left( \prod_{j\neq i} Z_{\omega^{(j)}}(\nu,t) \right)}
}
{[t^{3n}] \mathcal Z (\nu,t)} \to_{n \to \infty}
\nu^{m(\Delta) - m(\omega)}
\frac{
\kappa_{\omega^{(i)}}}
{ t_\nu^{|\omega|-|\Delta| +|\omega^{(i)}| - N'_i}\kappa(\nu)}  \, [t^{N'_i}] \left( \prod_{j\neq i} Z_{\omega^{(j)}}(\nu,t) \right)
\]
when $N'_i$ is fixed. Summing over $N'_i$ finishes the proof of the proposition.
\end{proof}
Theorem \ref{th:localCV} now follows directly from the tightness of the laws $\mathbb P_n^\nu$ (Proposition \ref{prop:tightness}) and from Proposition \ref{prop:limit} which implies that the sequence has a unique possible subsequential limit.

\section{Basic properties of the limit}

We introduce another probability distribution on the set of finite triangulations, denoted $\pB$ and called the Boltzmann law. This probability measure is often found to be of central importance in local limits of planar maps. For example it appears in the limiting law of uniform triangulations without spins in \cite{AngelSchramm} where it is called the free distribution, or in \cite{CLGpeel}.
\begin{defi}
The critical Boltzmann distribution $\pB$ is a probability measure on the set of finite triangulations defined by
\[
\pB ( \{T \}) = \frac{\nu^{m(T)} t_\nu^{|T|}}{\mathcal Z(\nu,t_\nu)}.
\]
for all $T\in \tr[f]$ (recall that $\gtr(\nu,t_\nu)$ is finite thanks to Theorem~\ref{theo:combimain}.
We will always denote by $\mathbf{T}_{\bol}$ a Boltzmann triangulation of the sphere, that is a random finite triangulation of the sphere with law $\pB$.

For any finite word $\omega$ on $\{\ps,\ns\}$, define similarly the probability measure $\pB^\omega$ on $\tr[f]^\omega$ by setting
\[
\pB^\omega ( \{ T \} ) = \frac{\nu^{m(T)} t_\nu^{|T|}}{Z_{\omega}(\nu,t_\nu)}.
\]
for any $T\in \kT^{\omega}$. We call a random triangulation with law $\pB^{\omega}$ a Boltzmann triangulation with boundary condition $\omega$ and denote it $\mathbf{T}_{\bol}^{\omega}$.
\end{defi}

Boltzmann triangulations satisfy the following spatial Markov property:

\begin{prop}[Spatial Markov property for Boltzmann triangulations]
\label{prop:spacialMarkov}
For any finite and rigid triangulation $K$ with $p \geq 1$ holes without common edges and respective boundary conditions given by $\omega^{(1)} , \ldots , \omega^{(p)}$, we have:
\[
\pB \left( K \subset \mathbf{T}_{\bol} \right) = 
\frac{\nu^{m(K) - m(\omega)} t_\nu^{|K|-|\omega|}}{\mathcal Z(\nu, t_\nu)}
\prod_{i=1}^{p} Z_{\omega^{(i)}} (\nu, t_\nu)
\]
where $\omega = (\omega^{(1)} , \ldots , \omega^{(p)})$. 
 
In addition, conditionally on the event $\left\{ K \subset \mathbf{T}_{\bol} \right\}$, the parts of $\mathbf{T}_{\bol}$ filling each hole of $K$ are independent random triangulations with a boundary, distributed as Boltzmann triangulations with respective boundary conditions given by $\omega$.
\end{prop}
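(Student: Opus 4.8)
The plan is to prove the spatial Markov property for Boltzmann triangulations directly from the definition of $\pB$, by a partition-function decomposition argument. First I would fix a finite rigid triangulation $K$ with $p$ holes with no common edges and boundary spins $\omega_1,\ldots,\omega_p$. The event $\{K\subset \mathbf{T}_{bol}\}$ means that $\mathbf{T}_{bol}$ decomposes as $K$ together with a triangulation $T_i$ with simple boundary $\omega_i$ filling the $i$-th hole, for $i=1,\ldots,p$. Since $K$ is rigid, this decomposition is \emph{unique}: there is exactly one way to recover $(K;T_1,\ldots,T_p)$ from $T$, so that
\[
\sum_{T : K\subset T} \nu^{m(T)} t_\nu^{|T|}
= \sum_{T_1\in\kT^{\omega_1},\ldots,T_p\in\kT^{\omega_p}} \nu^{m(K)+\sum_i (m(T_i)-m(\omega_i))}\, t_\nu^{|K|+\sum_i(|T_i|-|\omega_i|)},
\]
where the $-m(\omega_i)$ and $-|\omega_i|$ corrections account for the boundary edges of the holes, which are shared between $K$ and $T_i$ and must not be counted twice (edges are monochromatic or not according to the spins of $\omega_i$, which are prescribed by $K$). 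Writing $\omega=(\omega_1,\ldots,\omega_p)$, $m(\omega)=\sum_i m(\omega_i)$ and $|\omega|=\sum_i|\omega_i|$, the right-hand side factorizes over the holes as
\[
\nu^{m(K)-m(\omega)} t_\nu^{|K|-|\omega|} \prod_{i=1}^p \Big(\sum_{T_i\in\kT^{\omega_i}}\nu^{m(T_i)} t_\nu^{|T_i|}\Big)
= \nu^{m(K)-m(\omega)} t_\nu^{|K|-|\omega|}\prod_{i=1}^p Z_{\omega_i}(\nu,t_\nu).
\]
Dividing by $\mathcal Z(\nu,t_\nu)$ gives exactly the claimed formula for $\pB(K\subset \mathbf{T}_{bol})$; note all the series involved are finite at $t_\nu$ by Theorem~\ref{theo:combimain}.

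For the conditional independence statement, I would compute, for fixed triangulations $S_1\in\kT^{\omega_1},\ldots,S_p\in\kT^{\omega_p}$, the probability that the piece filling the $i$-th hole is exactly $S_i$ for every $i$. By the same unique-decomposition fact this event corresponds to the single triangulation obtained by gluing $K$ with $S_1,\ldots,S_p$, so its $\pB$-probability is $\nu^{m(K)-m(\omega)+\sum_i m(S_i)} t_\nu^{|K|-|\omega|+\sum_i |S_i|}/\mathcal Z(\nu,t_\nu)$. Dividing by $\pB(K\subset\mathbf{T}_{bol})$ computed above, the factors $\nu^{m(K)-m(\omega)}t_\nu^{|K|-|\omega|}/\mathcal Z$ cancel and one is left with $\prod_{i=1}^p \nu^{m(S_i)}t_\nu^{|S_i|}/Z_{\omega_i}(\nu,t_\nu) = \prod_{i=1}^p \pB^{\omega_i}(\{S_i\})$, which is precisely the statement that, conditionally on $\{K\subset\mathbf{T}_{bol}\}$, the pieces filling the holes are independent with laws $\pB^{\omega_1},\ldots,\pB^{\omega_p}$.

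The only genuinely delicate point — and the one I would state carefully — is the role of rigidity: it is exactly what guarantees that the map $T\mapsto(K;T_1,\ldots,T_p)$ on $\{K\subset T\}$ is a bijection onto $\kT^{\omega_1}\times\cdots\times\kT^{\omega_p}$, with no overcounting coming from different ways of embedding $K$ inside $T$ or of filling the holes to recover the same $T$. The hypothesis that the holes have no common edges is used so that the boundary corrections decouple cleanly: each shared boundary edge belongs to exactly one hole, so the bookkeeping of monochromatic boundary edges is additive over $i$ and the product structure emerges. Everything else is elementary generating-function bookkeeping, and the proof is essentially identical in spirit to the computation already carried out in the proof of Proposition~\ref{prop:limit}; I would simply refer to that computation for the routine parts.
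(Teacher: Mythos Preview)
Your proof is correct and follows essentially the same approach as the paper's own proof: a direct partition-function decomposition using the bijection between $\{T:K\subset T\}$ and $\kT_f^{\omega_1}\times\cdots\times\kT_f^{\omega_p}$, followed by the same conditional computation for the independence claim. If anything, you are slightly more explicit than the paper about the role of rigidity and the no-common-edges hypothesis in making the bookkeeping work.
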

\begin{proof}
This is a straightforward computation, analogous to the one performed in the proof of Proposition~\ref{prop:limit}. Indeed, a finite triangulation $T$ such that $K \subset T$ can be decomposed into $K$ and a collection of triangulations with respective boundary conditions $\omega^{(i)}$. This yields:
\begin{align*}
\pB \left( K \subset \mathbf{T}_{\bol} \right) & = \sum_{T_1 \in \tr[f]^{\scriptscriptstyle{\omega^{(1)}}} , \ldots , T_p \in \tr[f]^{\omega^{(p)}}} \frac{\nu^{m(K) + m(T_1) + \cdots + m(T_p) - m(\omega)} t_\nu^{|K| - |\omega| + |T_1| + \cdots + |T_p| }}{\mathcal Z(\nu, t_\nu)} \\
& = \frac{\nu^{m(K) - m(\omega)} t_\nu^{|K|-|\omega|}}{\mathcal Z(\nu,t_\nu)}
\prod_{i=1}^{p} Z_{\omega^{(i)}} (\nu, t_\nu)
\end{align*}
proving the first claim.

To prove the second claim, fix $T_1, \ldots , T_p$ some finite triangulations with respective boundary conditions $\omega^{(1)} , \ldots , \omega^{(p)}$. Then:
\begin{align*}
\pB &\left( \mathbf{T}_{\bol} \setminus K = (T_1, \ldots, T_p ) \middle| K \subset \mathbf{T}_{\bol} \right)\\
& \qquad= \frac{\nu^{m(K) + m(T_1) + \cdots + m(T_p) - m(\omega)} t_\nu^{|K| - |\omega| + |T_1| + \cdots + |T_p| }}{\mathcal Z(\nu, t_\nu)} \left( \pB \left( K \subset \mathbf{T}_{\bol} \right) \right)^{-1}\\
& \qquad = \prod_{i=1}^p \frac{\nu^{m(T_i)} t_\nu^{|T_i|}}{Z_{\omega^{(i)}} (\nu,t_\nu)} = \prod_{i=1}^p \pB^{\omega^{(i)}} (T_i),
\end{align*}
which concludes the proof.
\end{proof}

Proposition \ref{prop:spacialMarkov} allows to interpret the ball probabilities \eqref{eq:limitball} as an absolute continuity relation between $\mathbb P_\infty$ and $\pB$. Indeed, for $\Delta$ a ball of radius $r$ of some finite triangulation and $\omega = (\omega^{(1)}, \ldots ,\omega^{(\ell)})$ its boundary words, this probability can be written as:
\begin{equation} \label{eq:abscontprob}
\mathbb P_\infty \left( B_r (T) = \Delta \right) = \left( \frac{\mathcal Z (\nu, t_\nu)}{\kappa} \cdot \sum_{i=1}^\ell \frac{\kappa_{\omega^{(i)}}}{t_\nu^{\omega_i} Z_{\omega^{(i)}}(\nu, t_\nu)} \right) \cdot \mathbb P_{\bol} \left( B_r (T) = \Delta \right).
\end{equation}
This observation motivates the following definition:
\begin{defi}
Fix a finite triangulation $T$ and $r\geq 0$. If $B_r (T)$ is the whole triangulation $T$, set $\omega_r(T)=\emptyset$. Otherwise  set $\omega_r(T) = (\omega^{(1)},\ldots , \omega^{(\ell(T,r))})$ to be the spin configurations on the boundary of $B_r(T)$. We define:
\begin{equation}\label{eq:MR}
\mathbf M_r (T) = \begin{cases}
\displaystyle{
\frac{\mathcal Z (\nu, t_\nu)}{\kappa} \cdot \sum_{i=1}^{\ell(T,r)} \frac{\kappa_{\omega^{(i)}}}{t_\nu^{\omega^{(i)}} Z_{\omega^{(i)}}(\nu, t_\nu)}
}
& \text{if $\omega_r (T) \neq \emptyset$},\\
0 & \text{if $\omega_r (T) = \emptyset$}.
\end{cases}
\end{equation}
\end{defi}

Inspired by \cite[Theorem 4]{CLGpeel}, formula \eqref{eq:abscontprob} can be directly reformulated as follows:
\begin{prop}\label{th:Martingale}
The random process $(\mathbf M_r (\mathbf T_{\bol}) )_{r \geq 0}$ is a martingale with respect to the filtration generated by $ (B_r (\mathbf T_{\bol}))_{r \geq 0}$. Moreover, if $F$ is any nonnegative measurable function on the set of triangulations with holes, we have for every $r\geq 1$
\begin{equation} \label{eq:abscont}
\mathbb E \left[ F( B_r(\mathbf T_\infty ) \right] = \mathbb E \left[ \mathbf M_r F( B_r(\mathbf T_{\bol} ) \right]
\end{equation}
\end{prop}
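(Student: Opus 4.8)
The plan is to derive both assertions directly from the absolute-continuity identity~\eqref{eq:abscontprob}, which was itself obtained from Proposition~\ref{prop:limit} and the spatial Markov property of Proposition~\ref{prop:spacialMarkov}. First I would fix $r \geq 1$ and a finite rigid triangulation $\Delta$ that can arise as $B_r$ of a (finite or infinite) triangulation, with boundary words $\omega_r(\Delta) = (\omega_1, \ldots, \omega_\ell)$. By definition of $\mathbf M_r$, on the event $\{B_r(\mathbf T_{bol}) = \Delta\}$ the random variable $\mathbf M_r(\mathbf T_{bol})$ equals the deterministic constant appearing in~\eqref{eq:abscontprob}, so that $\mathbb E[\mathbf M_r(\mathbf T_{bol}) \mathbf 1_{\{B_r(\mathbf T_{bol}) = \Delta\}}] = \mathbf M_r(\Delta)\, \mathbb P_{bol}(B_r(\mathbf T_{bol}) = \Delta)$, which by~\eqref{eq:abscontprob} is exactly $\mathbb P_\infty(B_r(\mathbf T) = \Delta)$. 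Summing over all admissible $\Delta$ (a countable set, and using that $B_r(\mathbf T_{bol})$ is a.s.\ finite) gives $\mathbb E[\mathbf M_r(\mathbf T_{bol})] = 1$, and more generally, multiplying by $F(\Delta)$ for a nonnegative measurable $F$ on triangulations with holes and summing yields~\eqref{eq:abscont}, since $B_r(\mathbf T_{bol})$ and $B_r(\mathbf T_\infty)$ are both determined by the ball data. One must be slightly careful about the degenerate case $\omega_r(\Delta) = \emptyset$, i.e.\ when $\Delta$ is already the whole sphere: then $\mathbf M_r = 0$, but this is consistent since under $\mathbb P_\infty$ the triangulation is a.s.\ infinite so $\mathbb P_\infty(B_r(\mathbf T) = \Delta) = 0$ in that case as well.

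Next I would prove the martingale property. The filtration $(\mathcal F_r)_{r \geq 0}$ generated by $(B_r(\mathbf T_{bol}))_{r \geq 0}$ is the natural one, and $\mathbf M_r(\mathbf T_{bol})$ is $\mathcal F_r$-measurable since it depends only on the boundary words of $B_r$. Integrability follows from $\mathbb E[\mathbf M_r(\mathbf T_{bol})] = 1 < \infty$ together with non-negativity. For the martingale identity I would condition on $\mathcal F_r$: fixing $\Delta = B_r(\mathbf T_{bol})$ with $\omega_r(\Delta) = (\omega_1, \ldots, \omega_\ell) \neq \emptyset$, the spatial Markov property (second part of Proposition~\ref{prop:spacialMarkov}) says that conditionally on $\{B_r(\mathbf T_{bol}) = \Delta\}$ the fillings of the $\ell$ holes are independent Boltzmann triangulations with boundary conditions $\omega_1, \ldots, \omega_\ell$. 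Then $B_{r+1}(\mathbf T_{bol})$ is obtained from $\Delta$ by revealing one more layer inside each hole, and I would compute $\mathbb E[\mathbf M_{r+1}(\mathbf T_{bol}) \mid \mathcal F_r]$ by a direct enumeration: the contribution of each new boundary word profile is weighted by the corresponding Boltzmann probability, and the key algebraic fact that makes the telescoping work is precisely the singular/asymptotic relation of Theorem~\ref{theo:combimain} encoded in the ratios $\kappa_{\omega_i}/(t_\nu^{|\omega_i|} Z_{\omega_i}(\nu, t_\nu))$, which are ``harmonic'' for the peeling-type transition. Concretely, summing the next-layer weights recovers $\sum_i \kappa_{\omega_i}/(t_\nu^{|\omega_i|} Z_{\omega_i})$ back, so $\mathbb E[\mathbf M_{r+1} \mid \mathcal F_r] = \mathbf M_r$.

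An alternative, cleaner route — which I would probably prefer in the write-up to avoid re-doing the peeling combinatorics — is to deduce the martingale property \emph{from} the absolute continuity relation~\eqref{eq:abscont} rather than the other way around: if~\eqref{eq:abscont} holds for every $r$ and every nonnegative $\mathcal F_r$-measurable $F$, then for $r' > r$ and $F$ that is $\mathcal F_r$-measurable we have $\mathbb E[\mathbf M_{r'}(\mathbf T_{bol}) F(B_{r'}(\mathbf T_{bol}))] = \mathbb E[F(B_r(\mathbf T_\infty))] = \mathbb E[\mathbf M_r(\mathbf T_{bol}) F(B_r(\mathbf T_{bol}))]$, where the first equality uses~\eqref{eq:abscont} at level $r'$ (noting $F$ is also $\mathcal F_{r'}$-measurable) and the second uses it at level $r$; since $F$ is an arbitrary nonnegative $\mathcal F_r$-measurable function this is exactly $\mathbb E[\mathbf M_{r'}(\mathbf T_{bol}) \mid \mathcal F_r] = \mathbf M_r(\mathbf T_{bol})$.

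The main obstacle is not conceptual but bookkeeping: one has to make sure that the decomposition ``$T$ with $B_r(T) = \Delta$'' $\leftrightarrow$ ``$\Delta$ together with fillings of its holes'' is a genuine bijection respecting the edge count $|T| = |\Delta| + \sum |T_i| - |\omega|$ and the monochromatic-edge count, and that $\Delta$ is rigid precisely so this decomposition is unambiguous — this is exactly why the statement restricts to rigid $\Delta$ with no common edges between holes. Once~\eqref{eq:abscontprob} is granted (it is, being already displayed in the excerpt), everything else is summation over a countable family and an application of the spatial Markov property, so I expect the proof to be short.
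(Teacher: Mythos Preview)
Your proposal is correct and matches the paper's intent: the paper does not give a separate proof but states the proposition as a ``direct reformulation'' of~\eqref{eq:abscontprob}, and your argument --- summing~\eqref{eq:abscontprob} over admissible $\Delta$ to get~\eqref{eq:abscont}, then deducing the martingale property from~\eqref{eq:abscont} applied at two levels $r<r'$ with an $\mathcal F_r$-measurable test function --- is exactly the intended short derivation. Your ``cleaner route'' is the right one to write up; the first route via explicit peeling combinatorics would also work but is unnecessary here.
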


We conclude this section by stating the spatial Markov property for the IIPT. First, we need to introduce the analog of the IIPT for triangulations with fixed boundary condition. Let $\omega$ be a non empty word on $\{\ps,\ns \}$. We can define the probability measure $\pn ^{\omega}$ on $\mathcal{T}_{3n - |\omega|}^{\omega}$ by
\[
\pn ^{\omega} (\{T\}) = \frac{\nu^{m(T)}}{[t^{3n - |\omega|}] Z_\omega(\nu,t) }.
\]
A slight modification of the proof of Theorem \ref{th:localCV} shows that the sequence $(\pn ^{\omega})_{n \geq 1}$ converges weakly in $(\mathcal{T}^{\omega},\dloc)$ to a probability measure supported on one-ended infinite triangulations with boundary condition $\omega$. We denote this limiting probability measure by $\plim^{\omega}$ and call it the law of the Ising Infinite Planar Triangulation with boundary condition $\omega$. As in the uniform setting, this law appears naturally in the spatial Markov property of the IIPT:

\begin{prop}[Spatial Markov property for the IIPT]\label{prop:spatialIIPT}
Fix $K$ a finite rigid triangulation with $\ell$ holes (the holes can have common vertices but have no common edges) and endowed with a spin configuration such that the boundary conditions of its holes are given by $\omega=(\omega^{(1)},\ldots,\omega^{(\ell)})$.
On the event $\left\{ K\subset \mathbf{T}_{\infty} \right\}$, let us denote by $\mathbf{T}_i$ the component of $\mathbf{T}_{\infty}$ inside the i-th hole of $K$. Then, almost surely, only one of these components is infinite and the probability that it is $\mathbf{T}_i$ is given by:
\[
\plim \left\{ K\subset \mathbf{T}_{\infty} , \, \text{$\mathbf{T}_i$ is infinite} \right\} = \frac{t_\nu^{|K| - |\omega|} \nu^{m(K) - m(\omega)}}{\kappa(\nu)} \left( \prod_{\substack{j=1\\ j \neq i}}^\ell Z_{\omega^{(j)}}(\nu,t_\nu) \right) \kappa_{\omega^{(i)}}(\nu) t_\nu^{-|\omega^{(i)}|}.
\]
Finally, if we fix $i \in \{1, \ldots, \ell \}$, conditionally on the event $\left\{ K\subset \mathbf{T}_{\infty} , \, \text{$\mathbf{T}_j$ is finite for $j \neq i$} \right\}$
\begin{enumerate}
\item The random triangulations with boundary conditions $(\mathbf{T}_j)_{1 \leq j \leq \ell}$ are independent;
\item The random triangulation $\mathbf{T}_i$ is distributed as the IIPT with boundary condition $\omega^{(i)}$;
\item  For $j \neq i$, the random triangulation $\mathbf{T}_j$ is distributed as a Boltzmann triangulation with boundary condition $\omega^{(j)}$.
\end{enumerate}
\end{prop}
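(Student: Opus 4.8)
The plan is to deduce the whole statement from the ball-probability formula~\eqref{eq:limitball} of Proposition~\ref{prop:limit}, applied only in the case $\ell=1$, together with the one-endedness of $\plim$ and plain countable additivity; in particular no limit of the $\pn$'s needs to be revisited. The one preliminary fact I would record first is the $\plim^{\omega}$-analogue of~\eqref{eq:limitball}: running the computation of Proposition~\ref{prop:limit} with $\pn^{\omega}$ instead of $\pn$ and using Theorem~\ref{theo:combimain} for the asymptotics of the coefficients of $Z_{\omega}$ and $Z_{\omega'}$ gives, for every finite rigid triangulation $\delta$ of the $\omega$-gon carrying one additional hole with boundary word $\omega'$,
\[
\plim^{\omega}\!\left(\delta\subset\mathbf{T}\right)=\nu^{\,m(\delta)-m(\omega')}\,\frac{\kappa_{\omega'}(\nu)}{\kappa_{\omega}(\nu)}\,t_\nu^{\,|\omega|+|\delta|-2|\omega'|}.
\]

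\textbf{Step 1: one infinite component and the probability formula.} Since $\plim$ is one-ended, on $\{K\subset\mathbf{T}_{\infty}\}$ the complement of an embedded copy of $K$ has exactly one infinite connected component, so exactly one hole of $K$ receives an infinite triangulation while the other $\ell-1$ are filled by finite ones; this is the first assertion. To obtain the probability that the infinite hole is the $i$-th one, I would decompose, up to a $\plim$-null event,
\[
\{K\subset\mathbf{T}_\infty,\ \mathbf{T}_i\ \text{infinite}\}=\bigsqcup_{(T_j)_{j\ne i}}\{\Delta_{(T_j)}\subset\mathbf{T}_\infty\},
\]
the disjoint union ranging over tuples $(T_j)_{j\ne i}$ with $T_j\in\kT_f^{\omega_j}$, where $\Delta_{(T_j)}$ is the single-holed triangulation obtained by gluing $T_j$ into hole $j$ for each $j\ne i$ (its remaining hole still carries boundary word $\omega_i$). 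Applying~\eqref{eq:limitball} with $\ell=1$ to each $\Delta_{(T_j)}$, the factor $Z_{\omega_i}(\nu,t_\nu)$ cancels; using $|\Delta_{(T_j)}|=|K|+\sum_{j\ne i}|T_j|-(|\omega|-|\omega_i|)$ and $m(\Delta_{(T_j)})=m(K)+\sum_{j\ne i}m(T_j)-(m(\omega)-m(\omega_i))$, what remains factorises as a product over $j\ne i$ of $\nu^{m(T_j)}t_\nu^{|T_j|}$; summing over all $(T_j)_{j\ne i}$ replaces each such factor by $Z_{\omega_j}(\nu,t_\nu)$ and reproduces exactly the displayed expression in Proposition~\ref{prop:spatialIIPT}. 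Summing that expression over $i$ also recovers~\eqref{eq:limitball}, a convenient consistency check.

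\textbf{Step 2: conditional independence and identification of the laws.} I would refine the decomposition by prescribing in addition a cylinder event in the infinite hole: fix finite $T_j\in\kT_f^{\omega_j}$ for $j\ne i$ and a finite rigid $\delta_i$, a triangulation of the $\omega_i$-gon with one hole of boundary word $\omega_i'$. Gluing $K$, the $T_j$'s and $\delta_i$ yields a single-holed rigid triangulation $\Delta'$ with boundary word $\omega_i'$, and by one-endedness $\{\Delta'\subset\mathbf{T}_\infty\}$ coincides, up to a $\plim$-null event, with $\{K\subset\mathbf{T}_\infty,\ \mathbf{T}_j=T_j\ \forall j\ne i,\ \delta_i\subset\mathbf{T}_i\}$. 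Feeding the analogous edge and monochromatic-edge counts into~\eqref{eq:limitball} with $\ell=1$ and comparing with the formula of Step 1, with the definition of $\pB^{\omega_j}$, and with the $\plim^{\omega_i}$-ball formula above, all extraneous factors cancel and one is left with
\[
\plim\!\left(\Delta'\subset\mathbf{T}_\infty\right)=\plim\!\left(K\subset\mathbf{T}_\infty,\ \mathbf{T}_i\ \text{infinite}\right)\cdot\prod_{j\ne i}\pB^{\omega_j}(\{T_j\})\cdot\plim^{\omega_i}\!\left(\delta_i\subset\mathbf{T}\right).
\]
Dividing by $\plim(K\subset\mathbf{T}_\infty,\ \mathbf{T}_i\ \text{infinite})$ shows that, conditionally on $\{K\subset\mathbf{T}_\infty,\ \mathbf{T}_j\ \text{finite}\ \forall j\ne i\}$, the family $(\mathbf{T}_j)_{j\ne i}$ and $\mathbf{T}_i$ are independent with $\mathbf{T}_j\sim\pB^{\omega_j}$ and $\mathbf{T}_i\sim\plim^{\omega_i}$; since the events $\{\delta_i\subset\mathbf{T}_i\}$ over rigid $\delta_i$, together with the exact values of the $\mathbf{T}_j$, generate the $\sigma$-algebras involved, a monotone-class argument lifts this to the full statement.

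\textbf{Main obstacle.} There is essentially no conceptual difficulty once Proposition~\ref{prop:limit} and its $\plim^{\omega}$-analogue are in hand: the entire argument is a sequence of gluing decompositions plus a careful tally of edges and of monochromatic boundary edges, which is the only place where powers of $\nu$ and $t_\nu$, and signs, can go wrong. The two points that do require care are (i) \emph{rigidity} -- one must check, exactly as in \cite[Section~4.2]{AngelSchramm} and in the proof of Proposition~\ref{prop:limit}, that the glued objects $\Delta_{(T_j)}$ and $\Delta'$ are rigid (or pass to their rigidifications, which does not change any probability), so that~\eqref{eq:limitball} applies verbatim; and (ii) the verification that the cylinder events used above generate the $\sigma$-algebra of the tuple $(\mathbf{T}_1,\dots,\mathbf{T}_\ell)$ given $\{K\subset\mathbf{T}_\infty\}$, which justifies the monotone-class step. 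Both are routine in this setting.
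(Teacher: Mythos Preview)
Your proof is correct and follows the same approach as the paper, which simply states ``Everything follows directly from Proposition~\ref{prop:limit}''; you have in effect written out the details the paper leaves implicit. One minor remark: you deliberately restrict to the $\ell=1$ case of~\eqref{eq:limitball} and rebuild the general case by summing over finite fillings of the other holes, but Proposition~\ref{prop:limit} already gives the general-$\ell$ formula together with the identification of the infinite hole (its last sentence), so your Step~1 formula can be read off directly as the $i$-th summand there---your decomposition is still needed, however, for the conditional independence and the identification of the laws in Step~2.
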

\begin{proof}
Everything follows directly from Proposition~\ref{prop:limit}.
\end{proof}

\section{The critical IIPT is almost surely recurrent}

\subsection{Generating series of triangulations with simple boundary}\label{sub:seriesCrit}

We start with a technical lemma about the generating series of triangulations with simple boundary. For every $p>0$, we set $\kappa_p = \sum_{|\omega| = p} \kappa_{\omega}$. Since we only use this Lemma to prove Theorem~\ref{th:recurrence} and since our proof of this Theorem does not work for all $\nu$ (see Remark~\ref{rem:nurec}), we restrict ourselves to $\nu=\nu_c$ for the sake of simplicity.
\begin{lemm} \label{lem:growthCm}
At $\nu=\nu_c$, the series $\sum_{p \geq 1} \kappa_p y^p$ has radius
of convergence {$y_c$} with $y_c =
\frac{3}{5} (1+ \sqrt{7})$.
\end{lemm}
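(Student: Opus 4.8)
The quantity $\kappa_p = \sum_{|\omega|=p}\kappa_\omega$ is, by Theorem~\ref{theo:combimain}, the constant governing the $n^{-7/3}$-asymptotics of $[t^{3n-p}]\sum_{|\omega|=p}Z_\omega(\nu_c,t)$, i.e.\ of the coefficient of a triangulation of the $p$-gon with \emph{arbitrary} (non necessarily simple? here simple, since $\kT_f^\omega$ has a simple root face) boundary summed over all $2^p$ boundary words. So the first step is to recognize $\sum_{|\omega|=p}Z_\omega(\nu_c,t)$ as the generating series $\widehat Z_p(\nu_c,t)$ of triangulations of the $p$-gon with a \emph{free} (unconstrained) spin configuration on the simple boundary, and to assemble the bivariate series $\widehat Z(y):=\sum_{p\ge 1}\widehat Z_p(\nu_c,t)y^p$. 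By Proposition~\ref{prop:dvIsing} (or directly the transfer theorem), for each fixed $p$ one has $\kappa_p = \frac{1}{\Gamma(-4/3)}\,\widehat C_p$, where $\widehat C_p$ is the coefficient of $(1-(t/t_{\nu_c})^3)^{4/3}$ in the singular expansion of $t^p\widehat Z_p$ at $t=t_{\nu_c}$; hence the radius of convergence of $\sum_p \kappa_p y^p$ equals the radius of convergence in $y$ of the singular part $\widehat C(y):=\sum_p \widehat C_p y^p$ of $\widehat Z(y)$ at $t=t_{\nu_c}$, \emph{provided} one can control uniformity in $p$ (see the obstacle below).

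The second step is to get hold of $\widehat Z(y)$ explicitly enough to locate its singularity in $y$. Here I would use the rational parametrization already available: by Proposition~\ref{prop:expRatM} and the root-edge peeling relations (Proposition~\ref{prop:PeelSimple}), each $\widehat Z_p$ is a rational function of the single series $U=U(\nu_c,t)$, with denominator a power of $(1-2U)$. Concretely, summing the peeling equation \eqref{eq:Zomega} over all boundary words of length $p$ collapses the spin bookkeeping and yields a closed functional equation for $\widehat Z(y)$ whose coefficients are polynomials in $t$ and in $Z_\ps(\nu_c,t)=Q_1(\nu_c,t)$; solving it expresses $\widehat Z(y)$ as an algebraic — in fact rational in $U$ and $y$ — function. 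Specializing $U$ to its critical value $\alpha=\alpha(\nu_c)=U(t_{\nu_c})$, the singularity of $\widehat Z(y)$ in the variable $y$ becomes the vanishing of the relevant denominator, giving an \emph{explicit} radius $y_c$ as a function of $\alpha$ and $t_{\nu_c}$. One then plugs in the known values $\rho_{\nu_c}=(t_{\nu_c})^3=\frac{25\sqrt7-55}{864}$ and the corresponding $\alpha(\nu_c)$ (obtained by substituting $\rho=\rho_{\nu_c}$ in \eqref{eq:t3U} and taking the relevant root $<1/2$) and checks, by a direct algebraic simplification, that $y_c=\frac35(1+\sqrt7)$.

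The third step is to transfer: once $\widehat Z(y)$ is known to be, say, rational in $U$ and $y$ with a simple pole in $y$ at $y=y_\ast(U)$ for $U$ near $\alpha$, the singular expansion of $U$ in $t$ (the $1/3$-singularity from Proposition~\ref{prop:asyQ1}) propagates coefficientwise in $y$ on the region $|y|<y_c$, so that $\widehat C(y)$ is analytic for $|y|<y_c$ and genuinely singular at $y_c$; combined with $\kappa_p=\widehat C_p/\Gamma(-4/3)$ this gives the claimed radius of convergence.

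\textbf{Main obstacle.} The delicate point is the interchange of the two asymptotic regimes — letting $n\to\infty$ for fixed $p$ (which defines $\kappa_p$) versus the behaviour as $p\to\infty$. Knowing the singularity of $\widehat Z(y)$ in $y$ at the fixed value $t=t_{\nu_c}$ is not by itself enough; one must argue that the coefficient $\widehat C_p$ extracted from the $(1-(t/t_{\nu_c})^3)^{4/3}$ term grows (to exponential order) like $y_c^{-p}$, i.e.\ that no cancellation or change of exponent occurs uniformly in $p$. The clean way to do this is to treat $\widehat Z(y)$ as a single algebraic function of the two variables $t$ and $y$ (via $U$), identify its dominant singularity in $t$ for $y$ in a real neighbourhood of $(0,y_c)$, check it stays a $1/3$-type singularity with a non-vanishing coefficient as $y\uparrow y_c$ (this is where the explicit rational-in-$U$ form is essential, so that one can verify the coefficient of $(1-U/\alpha)$, hence of $(1-(t/t_{\nu_c})^3)^{1/3}$, does not vanish identically in $y$ before $y_c$), and then invoke the transfer theorem \cite[Thm VI.3]{FS} with uniformity in $y$. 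I expect the bulk of the work — and the only place a genuine computation is unavoidable — to be writing down the functional equation for $\widehat Z(y)$, solving it, and performing the final algebraic check that the denominator's root at $U=\alpha(\nu_c)$ is exactly $y_c=\frac35(1+\sqrt7)$.
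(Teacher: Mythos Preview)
Your overall strategy --- pass to the bivariate series $\widehat Z(y)=\sum_p Z_p(\nu_c,t)\,y^p$, obtain an algebraic description of it, locate its singularity in $y$ at $t=t_{\nu_c}$, and then argue uniformly in $p$ to control the coefficients $\widehat C_p$ of the $4/3$-singular term --- is precisely the route the paper follows, and you have correctly isolated the genuine difficulty (the interchange of the $n\to\infty$ and $p\to\infty$ regimes).

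There is, however, a concrete error of expectation in your second step. Summing the root-edge relation~\eqref{eq:Zomega} over all boundary words does not ``collapse the spin bookkeeping'' into a closed equation for $\widehat Z(y)$ alone: the prefactor $\nu^{\delta_{a=b}}$ and the shared spin $c$ in the quadratic term $Z_{cb\omega'}Z_{ac\omega''}$ prevent a clean factorisation, so you would still need auxiliary series keeping track of one or two boundary spins. More importantly, your conclusion that $\widehat Z(y)$ is \emph{rational} in $U$ and $y$ with a simple pole in $y$ is incorrect. The paper obtains the governing equation not by summing~\eqref{eq:Zomega} but by importing \cite[Lemma~31]{BernardiBousquet}, identifying $Z(t,ty)$ with the series $R(0,ty)$ there; this gives $P(Z(t,ty),y,U(t))=0$ with $P$ of degree~$4$ in its first variable. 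The singularity $y_c$ is therefore located not as a pole but through the discriminant $\mathrm{discrim}_z P(z,y,u)$: one shows (Lemma~\ref{lem:analytic}) that for $y<y_c$ all real positive discriminant branches stay above $U(t_{\nu_c})$ or are increasing, and that at $y=y_c=\frac35(1+\sqrt7)$ three branches coalesce at $u=U(t_{\nu_c})$. The coefficients $\widehat B(y)$ and $\widehat C(y)$ are then expressed as explicit ratios $P_B/\partial_zP$ and $P_C/\partial_zP$ evaluated at $(A(y),y,U_c)$, and one checks directly that no cancellation occurs before $y_c$. So your plan survives, but the computation at its core is a degree-$4$ discriminant analysis rather than the location of a simple pole.
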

\begin{proof}
Let $A_p=\sum_{\omega,|\omega|=p}A_\omega(\nu_c)$ (resp. $B_p$,
$C_p$), so that, in view of Theorem~\ref{theo:combimain},
$t^p Z_p(\nu_c,t)=\sum_{\omega,|\omega|=p}t^p Z_\omega(\nu_c,t)$ is
Ising-algebraic with parameters $A_p$, $B_p$ and $C_p$
  \[
 (t^pZ_p(\nu_c,t))\mid_{t^3=t_{\nu_c}^3(1-x)}= A_p+ B_p\cdot x
  + C_p\cdot x^{4/3}(1+\varepsilon_p(x)),
  \]
  with $\varepsilon_p(x)$ holomorphic in the open disc $\mathcal{D}$
  of radius 1 centered at $x=1$, and such that $\varepsilon_p(x)\to0$
  as $x\to0^+$.
  While the latter expansions are \emph{a priori} non uniform
  (in the sense that $\varepsilon_p(x)$ depends on $p$), we can still
  define the formal power series
  \[
  A(y)=\sum_{p\geq1}A_py^p,
  \quad
   B (y)=\sum_{p\geq1} B_py^p,
  \quad\textrm{ and }\quad
   C(y)=\sum_{p\geq1} C_py^p,
  \]
  and wonder about their
  respective radii of convergence. 

  Our strategy to determine these radii of convergence is to relate
  $A(y)$, $ B(y)$ and $ C(y)$ to the formal power
  series $ Z(t,ty) $ of $\mathbb{Q}[[y,t^3]]$ defined as
  \[
  Z(t,ty)
  =\sum_{p\geq1}t^pZ_{p}(\nu_c,t)\cdot y^p=\sum_Ty^{p(T)}t^{p(T)+|T|}\nu_c^{m(T)}.
  \]
  More precisely we would like to view $Z(t,ty)$ as an analytic
  function of $t^3$ with $y$ a complex parameter fixed in an appropriate
  domain and to perform an expansion as $t^3 \to t_{\nu_c}^3$ of the form
  \begin{equation*}
  {{Z}}(t,ty)={{A}}(y)+{B}(y)x+{{C}}(y)x^{4/3}(1+\varepsilon(y,x))
  \end{equation*}
  with $t^3=t_{\nu_c}^3(1-x)$ and $\lim_{x\to 0^+}\varepsilon(y,x)=0$.

  Now the series $ Z(t,ty)$ is the unique formal power series
  solution of an explicit algebraic equation
  \begin{equation}\label{eq:PZt}
  P(Z(t,ty),y,U(t))=0
  \end{equation}
  for some polynomial $P(z,y,u)$ of degree 4 in $z$, and $U(t)$ is the
  series introduced in Theorem~\ref{th:gsBoundary}: this equation can
  be deduced from \cite[Lemma 31]{BernardiBousquet} using the fact
  that our $ Z(t,ty)$ is exactly the series $R(0,ty)$ there.  In
  particular we shall consider the unique formal power series 
  $\zeta(y,u)$ solution of the equation
  \begin{equation}\label{eq:PZU}
    P(\zeta(y,u),y,u)=0
  \end{equation}
  so that, as formal power series, $ Z(t,ty)=\zeta(y,U(t))$.

  In particular $Z(t,ty)$ as a complex function can be
  identified near the origin $(t,y)=(0,0)$ with the branch of the
  analytic variety defined by $P$ having the expected Taylor
  expansion. We will use this to prove in Lemma~\ref{lem:analytic}
  that $Z(t,y)$ is analytic in a polydisc
  $\mathcal{D}(0,t_{\nu_c})\times\mathcal{D}(0,y_c)$ and singular at
  the point $(t_{\nu_c},y_c)$, where $y_c$ is as in
  Lemma~\ref{lem:growthCm}.

  On the other hand using Equation~\eqref{eq:PZt} (or
  Equation~\eqref{eq:PZU}) we can study each branch
  $\tilde{{Z}}(t,ty)$ of the analytic variety defined by $P$
  near the point $t=t_{\nu_c}$, and derive explicit descriptions of
  their coefficients in an expansion
  \begin{equation*}
  \tilde{{Z}}(t,ty)
  =\tilde{{A}}(y)
  +\tilde{{B}}(y)x
  +\tilde{{C}}(y)x^{4/3}(1+\varepsilon(y,x))
  \end{equation*}
  with $t^3=t_{\nu_c}^3(1-x)$.
  Indeed taking $t=t_{\nu_c}$ in Equation~\eqref{eq:PZt} or
  $u=U_c:=U(t_{\nu_c})$ in Equation~\eqref{eq:PZU} we obtain an
  algebraic equation satisfied by $\tilde
  {{A}}(y)=\tilde{{Z}}(t_{\nu_c},t_{\nu_c}y)=\zeta(y,U_c)$. Moreover we
  will identify
  \begin{align*}
  \tilde {{B}}(y)&=\lim_{t\to t_c}(\tilde{{Z}}(t_{\nu_c},t_{\nu_c} y)-\tilde A(y))(t_c-t)^{-1}\quad\textrm{ and}\\
  \tilde {{C}}(y)&=\lim_{t\to t_c}(\tilde{{Z}}(t_{\nu_c},t_{\nu_c} y)-\tilde A(y)-\tilde B(y)(t_c-t))(t_c-t)^{-4/3}
  \end{align*}
  in terms of partial derivatives of $P$ evaluated at $z=\tilde {{A}}(y)$
  and $u=U(t_{\nu_c})$:
  \begin{align} \notag
    \tilde {{B}}(y)=\frac{P_B(\tilde {{A}}(y),y,U_c)}{\partial_{z}P(\tilde {{A}}(y),y,U_c)}\quad\textrm{ and }\quad
    \tilde {{C}}(y)=\frac{P_C(\tilde {{A}}(y),y,U_c)}{\partial_{z}P(\tilde {{A}}(y),y,U_c)}
  \end{align}
  for some explicit polynomial $P_B$ and $P_C$.

  Finally the positivity properties of $ Z(t,ty)$ allow to
  discriminate between the possible branches and characterize $
  A(y)$ as the unique formal power series in the variable $y$ such
  that
  \begin{align} \notag
    Q( A(y),y,U_c)=0
  \end{align}
  where $Q$ is a well chosen factor of the polynomial $P$.  In
  particular the equation for $ A(y)$ implies that $ A(y)$
  has $y_c$ as radius of convergence and from the irreducible rational
  expression of $ B(y)$ and $ C(y)$ we can check that no
  cancellation occur and these two series also have $y_c$ as radius of
  convergence. All computations are available in the companion Maple file \cite{Maple}.
\end{proof}

\begin{lemm}\label{lem:analytic}
  The series $Z(t,ty)$ is analytic in the larger domain
  $\mathcal{D}(0,t_{\nu_c}) \times \mathcal{D}(0,{y_c})$ with
  $y_c=\frac35(1+\sqrt7)$, and singular at $(t_{\nu_c},y_c)$.
\end{lemm}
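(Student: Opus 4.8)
The plan is to exploit the identity $Z(t,ty)=\zeta(y,U(t))$ of formal power series established in the proof of Lemma~\ref{lem:growthCm}, together with the fact that $U(t)$ is explicitly controlled by Theorem~\ref{th:gsBoundary} and Proposition~\ref{prop:asyQ1}. First I would recall that $U(t)$, as a function of $t^3$, is analytic on the open disc $\mathcal{D}(0,\rho_{\nu_c})$ with $\rho_{\nu_c}=t_{\nu_c}^3$, is real-increasing on $[0,\rho_{\nu_c})$, takes values in $[0,U_c)$ with $U_c=U(t_{\nu_c})<1/2$, and has a $1/3$-type singularity at $\rho_{\nu_c}$; in particular $t\mapsto U(t)$ maps $\mathcal{D}(0,t_{\nu_c})$ into a neighbourhood of $[0,U_c)$ on which we need to understand $\zeta(\cdot,\cdot)$. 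The map $u\mapst \zeta(y,u)$ is a branch of the algebraic curve $P(z,y,u)=0$ (degree $4$ in $z$), so it is analytic as long as we stay on a smooth, well-separated sheet, i.e.\ as long as $\partial_z P(\zeta(y,u),y,u)\neq 0$ and the branch does not collide with another. The core analytic input is therefore: along the real segment $u\in[0,U_c]$ and $y\in[0,y_c)$, the relevant branch $\zeta(y,u)$ stays away from the branch points of $P$ in $z$, and the first singularity (as $y$ increases along the reals with $u=U_c$ fixed, or as $u\to U_c$ with $y$ fixed) occurs exactly at $(y,u)=(y_c,U_c)$.

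Concretely, I would argue in two stages. \emph{Stage 1 (bidisc of analyticity).} Since $Z(t,ty)=\sum_p t^pZ_p(\nu_c,t)y^p$ has nonnegative coefficients, Pringsheim's theorem applies in each variable; combined with the substitution $Z(t,ty)=\zeta(y,U(t))$ and the analyticity of $U$ on $\mathcal{D}(0,t_{\nu_c})$, it suffices to show that $(y,u)\mapsto\zeta(y,u)$ extends analytically to $\mathcal{D}(0,y_c)\times V$ for some open neighbourhood $V\supseteq [0,U_c]$ of the image of $\mathcal{D}(0,t_{\nu_c})$ under $U$. For this I would use the discriminant of $P$ with respect to $z$: the set of $(y,u)$ where two $z$-roots of $P$ coincide is an algebraic curve, and one checks (via the explicit resultant, see the companion Maple file~\cite{Maple}) that this curve does not meet the closed polydisc $\overline{\mathcal D}(0,y_c)\times[0,U_c]$ except possibly at $(y_c,U_c)$ — equivalently, that $\partial_z P(\zeta(y,u),y,u)$, which is the denominator appearing in the formulas for $\tilde B(y)$ and $\tilde C(y)$, is nonvanishing on $[0,y_c)\times[0,U_c]$. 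By analytic continuation of the branch selected by the Taylor expansion at the origin, $\zeta$ is then analytic on the full complex polydisc $\mathcal D(0,y_c)\times V$, and pulling back by $U$ gives analyticity of $Z(t,ty)$ on $\mathcal D(0,t_{\nu_c})\times\mathcal D(0,y_c)$. \emph{Stage 2 (singularity at the corner).} To see that $(t_{\nu_c},y_c)$ is genuinely singular, I would combine two facts: on one hand, by Lemma~\ref{lem:growthCm} the diagonal-type series $A(y)=Z(t_{\nu_c},t_{\nu_c}y)=\zeta(y,U_c)$ has radius of convergence exactly $y_c$, so $\zeta(\cdot,U_c)$ — hence $Z(t,ty)$ — cannot be analytic at $y=y_c$ when $t=t_{\nu_c}$; on the other hand $U$ is singular at $t=t_{\nu_c}$. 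Either phenomenon already forces $(t_{\nu_c},y_c)$ to be a singular point of $Z(t,ty)$, and a short check that these are the first singularities encountered along the positive reals (monotonicity of $U$ on $[0,t_{\nu_c})$ and positivity of coefficients) shows there is no smaller bidisc of analyticity.

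The main obstacle I expect is \emph{Stage 1}, specifically ruling out spurious singularities of the branch $\zeta(y,u)$ strictly inside $\overline{\mathcal D}(0,y_c)\times[0,U_c]$: a priori the algebraic function could hit a branch point of $P$, or the selected sheet could be pinched between two conjugate sheets, at some $(y,u)$ with $|y|<y_c$ even though no such accident happens along the positive real axis. Handling this cleanly requires (i) identifying the correct irreducible factor $Q\mid P$ that carries the branch $\zeta$ (this is where the positivity of the coefficients of $Z$ is used to discriminate among branches, exactly as in the proof of Lemma~\ref{lem:growthCm}), and (ii) checking that the discriminant of $Q$ in $z$, together with the leading coefficient of $Q$ in $z$, has no zero in the relevant polydisc — a finite but genuine computation carried out in~\cite{Maple}. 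Once the branch is known to stay on a single smooth sheet over the whole polydisc, analyticity of $Z(t,ty)$ follows from the analyticity of $U$ and the implicit function theorem, and the singular behaviour at the corner is read off from Lemma~\ref{lem:growthCm} as above.
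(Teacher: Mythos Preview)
Your overall strategy---reduce to the algebraic equation $P(\zeta,y,u)=0$, analyse the discriminant in $z$, and locate the first singularity at $(y_c,U_c)$---is exactly right and matches the paper. But Stage~1 as written has a genuine gap in the pull-back step. You check that the discriminant curve avoids $\overline{\mathcal D}(0,y_c)\times[0,U_c]$, then claim $\zeta$ extends analytically to $\mathcal D(0,y_c)\times V$ for a \emph{thin} neighbourhood $V$ of the real segment $[0,U_c]$, and finally pull back by $U$. The problem is that the image $U(\mathcal D(0,t_{\nu_c}))$ is a genuine two-dimensional complex region (since $U$ is a nonconstant analytic function of $t^3$), not contained in any small tube around $[0,U_c]$; so a discriminant check on $\overline{\mathcal D}(0,y_c)\times[0,U_c]$ says nothing about branch-point collisions for complex $u$ away from the real axis. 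To make your composition argument work you would have to control the discriminant on $\mathcal D(0,y_c)\times U(\mathcal D(0,t_{\nu_c}))$, which is a much harder (and different) computation than what you describe.

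The paper sidesteps this entirely by never composing over complex $t$. It exploits the nonnegativity of the coefficients of $Z(t,ty)$ much more systematically: first a crude domination by $Z_3$ gives analyticity on $\mathcal D(0,t_{\nu_c})\times\mathcal D(0,1)$; then, using Pringsheim, for each fixed \emph{real} $y>0$ the radius $t_c(y)$ in $t$ is located by looking only at real decreasing branches of $\mathrm{discrim}_z P(z,y,u)$ (since for real $t$ one has $U(t)\in[0,U_c]$ automatically), yielding $t_c(y)=t_{\nu_c}$ for $y\in(0,y_c)$; symmetrically, at $t=t_{\nu_c}$ one finds the radius in $y$ is $y_1=y_c$ directly from $P(\,\cdot\,,y,U_c)=0$. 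The passage from these \emph{real-axis} radii to analyticity on the full complex bidisc is then a one-line appeal to the standard fact (cf.\ \cite[Appendix~B.8]{FS}) that a bivariate power series with nonnegative coefficients which converges at a positive real point $(t_0,y_0)$ is analytic on $\mathcal D(0,t_0)\times\mathcal D(0,y_0)$. This is the missing idea: positivity lets you trade the hard complex discriminant analysis for a purely real one. Your Stage~2 is essentially fine, though note that in the paper the radius of $A(y)=Z(t_{\nu_c},t_{\nu_c}y)$ is re-derived inside this lemma from the specialised equation $P(\,\cdot\,,y,U_c)=0$, rather than quoted from Lemma~\ref{lem:growthCm} (which itself invokes the present lemma).
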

\begin{proof}
  On the one hand, this formal power series is
  by definition an element of $\mathbb{Q}(\nu_c)[y][[t]]$, and for $|y|\leq1$
  the series is term-by-term dominated by the series
  $Z_3(\nu_c,|t|)$. Indeed, since $\nu_c>1$,
  \[
  |Z(t,ty)|\leq\sum_{T}|t|^{|\Delta(T)|}\nu_c^{m(T)}\leq\sum_{T}|t|^{|\Delta(T)|}\nu_c^{m(\Delta(T))}\leq Z_3(\nu_c,|t|),
  \]
  where $\Delta(T)$ denotes the triangulation obtained by
  triangulating the outer face of $T$ from a new vertex. Since $Z_3$
  has radius of convergence $t_{\nu_c}$ we already know that:
  \begin{itemize}
  \item
    $ Z(t,ty)$ is absolutely convergent in the polydisc
    $\mathcal{D}(0,t_{\nu_c}) \times \mathcal{D}(0,1)$.
  \end{itemize}
  For any fixed $y$, let $t_c(y)$ denote the radius of convergence of
  the series $ Z(t,ty)$ in the variable $t$.  In view of the
  positivity of the coefficients of $Z(t,ty)$, the function $t_c(y)$ is
  a weakly decreasing function of $y$ for $y$ positive, and it is at
  most equal to $t_{\nu_c}$ since $t_{\nu_c}$ is the radius of
  convergence of $t\cdot Z_1(\nu_c,t)=[y] Z(t,ty)$: in particular
  $t_c(y)=t_{\nu_c}$ for $y\in(0,1)$.

    For any $y>0$, $ Z(t,ty)$ is a series with positive
  coefficients, so by Pringsheim's theorem it must be singular at
  $t=t_c(y)$. In particular if $t_c(y)<t_{\nu_c}$, then $U(t)$ is
  regular in $\mathcal{D}(0,t_c(y))$ and, as a function of $u$, $\zeta(y,u)$ admits an
  analytic continuation in an open domain containing $(0,U(t_c(y)))$
  and it is singular at $u_c(y)=U(t_c(y))$. Recall from Lemma~\ref{lem:etU}  that $U(t)$ has
  nonnegative coefficients. Hence, it is an increasing function of $t$. Consequently, $u_c(y)$ must be a  nonincreasing function of
  $y$, with $u_c(y)=U(t_{\nu_c})$ for $y\in(0,1)$.

  As a consequence of the previous analysis we can look for $u_c(y)$
  among the decreasing branches in the root variety of the
  discriminant $\Delta(y,u)=\mathrm{discrim}_zP(z,y,u)$ with respect
  to $z$ of the polynomial $P(z,y,u)$. This discriminant factors into
  three irreducible factors of degree at most three in $y$, that can
  thus be explicitly analyzed: for $y<y_c=\frac35(1+\sqrt 7)$, all
  real positive branches have either $u>U(t_{\nu_c})$ or are
  increasing. At $y=y_c$, three discriminant branches meet with
  $u=U(t_{\nu_c})$ which is the minimal positive root of
  $\Delta(y_c,u)$. We therefore conclude that $t_c(y)=t_{\nu_c}$ for
  $y\in(0,y_c)$, or in other terms: for any fixed $y\in(0,y_c)$, the series $Z(t,ty)$ has
    radius of convergence $t_{\nu_c}$. It implies that the series is absolutely convergent in the polydisc
    $\mathcal{D}(0,t_{\nu_c}) \times \mathcal{D}(0,y_c)$, which concludes the proof.
\end{proof}

\subsection{Root degree distribution and recurrence}\label{sec:rootDegree}
 Since the IIPT is the local weak limit of uniformly rooted maps, by Gurel Gurevich and Nachmias \cite{GGN}, it is enough to prove that the root degree (i.e. the number of half-edges incident to the root) distribution of the IIPT has exponential tails.

To study this degree, let us have a look at the structure of the hull of radius $1$ around the root, see Figure \ref{fig:1hull} for an illustration. This hull, denoted by $\overline{B}_1(\mathbf T _\infty)$ is by definition the ball ${B}_1(\mathbf T _\infty)$ completed by the finite connected components of $\mathbf T _\infty \setminus {B}_1(\mathbf T _\infty)$. It is therefore a triangulation (with spins) with one hole, which corresponds to the part of $\partial{B}_1(\mathbf T _\infty)$ separating the root vertex from infinity in the map. Such maps (or more precisely slight modifications) are called triangulations of the cylinder, and have been extensively studied in \cite{CLGfpp,CuMe,Kr,Me} to which we refer for a more detailed analysis. In particular, each edge of $\partial \overline{B}_1(\mathbf T _\infty)$ belongs to a face of $\mathbf T _\infty$ having the root vertex as third vertex. The slots between two consecutive such faces are filled with independent Boltzmann triangulations with the proper boundary conditions. See Figure~\ref{fig:1hull} for an illustration.
\begin{figure}[!ht]
\begin{center}
\includegraphics[width=0.7\textwidth]{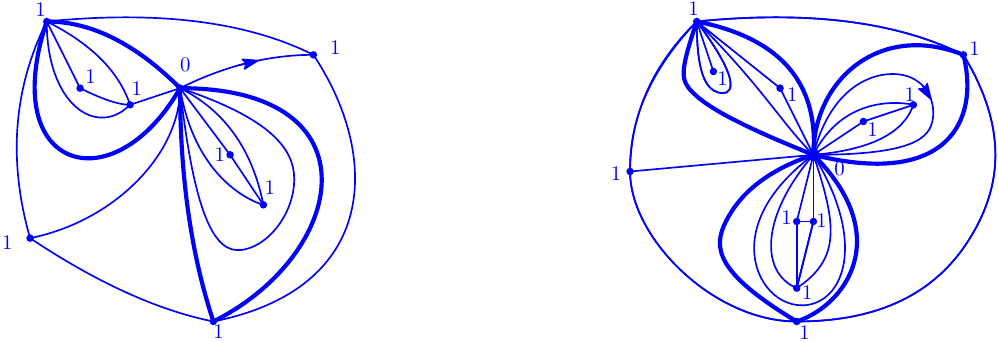}
\caption{\label{fig:1hull}Structure of the hull of radius $1$ around the root of the IIPT, when the root is not a loop (left) and when the root is a loop (right). Bold edges form the boundary of the slots.}
\end{center}
\end{figure}

The degree of the root vertex in $\mathbf T_ \infty$ is then the sum of the degrees of the root vertex of each of these Boltzmann triangulations filling the slots of the hull of radius $1$. Therefore, we only have to prove that the distribution of the root degree of these Boltzmann triangulations and the boundary length $|\partial \overline{B}_1(\mathbf T _\infty)|$ have exponential tails, which is done in Propositions \ref{prop:HullPerimTail} and \ref{prop:BoltzDegTail}.

\begin{prop} \label{prop:HullPerimTail}
There exist two constants $c>0$ and $\lambda<1$ such that for every $p \geq 1$,
\[
\mathbb P_\infty \left( |\partial \overline{B}_1(\mathbf T _\infty)| \geq p \right) \leq c \, \lambda^p.
\]
\end{prop}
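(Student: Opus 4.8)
The plan is to express the law of $|\partial \overline{B}_1(\mathbf T_\infty)|$ explicitly in terms of the generating series appearing in Theorem~\ref{theo:combimain}, using the absolute continuity relation \eqref{eq:abscont} between $\plim$ and $\pB$, and then to show the resulting series in the auxiliary variable $y$ has radius of convergence strictly larger than $1$, so that the coefficients decay geometrically. First I would describe the structure of the hull of radius~$1$ precisely: conditionally on the root vertex having degree contributions coming from a fan of faces around it, $\overline{B}_1(\mathbf T_\infty)$ is built from a cyclic sequence of triangles incident to the root vertex together with independent Boltzmann triangulations (of the appropriate simple boundary lengths and spins) filling the slots between consecutive triangles, plus, in the case where the root edge is a loop, one Boltzmann triangulation of the sphere on one side of the loop. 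This is the decomposition recalled in the text and analyzed in \cite{CLGfpp,CuMe,Kr,Me}, and it realizes the boundary $\partial \overline{B}_1(\mathbf T_\infty)$ as a simple cycle whose length $p$ is a sum of contributions, each bounded by the boundary length of one slot-filling Boltzmann triangulation.

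Next I would use Proposition~\ref{th:Martingale}, more precisely \eqref{eq:abscont} with $r=1$ and $F$ the indicator that the radius-$1$ hull has a prescribed boundary perimeter $p$, to write
\[
\plim\bigl(|\partial \overline{B}_1(\mathbf T_\infty)| = p\bigr)
= \mathbb E\bigl[ \mathbf M_1(\mathbf T_{bol}) \, \mathbf 1_{\{|\partial \overline{B}_1(\mathbf T_{bol})| = p\}}\bigr].
\]
Since $\mathbf M_1$, by its definition in \eqref{eq:MR}, equals $\frac{\mathcal Z(\nu,t_\nu)}{\kappa}\sum_i \kappa_{\omega_i} t_\nu^{-|\omega_i|} Z_{\omega_i}(\nu,t_\nu)^{-1}$ with the sum over the boundary component(s) of the hull, and since here there is a single hole of perimeter $p$, the factor $\mathbf M_1$ is of the form $\frac{\mathcal Z(\nu,t_\nu)}{\kappa}\cdot \kappa_{\omega}\, t_\nu^{-p}\, Z_{\omega}(\nu,t_\nu)^{-1}$ where $\omega$ is the (random) spin word on $\partial \overline{B}_1$. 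Using $\pB$ to describe the hull of radius~$1$, the probability $\pB(|\partial \overline{B}_1| = p)$ is itself a finite sum over spin words $\omega$ of length $p$ of $t_\nu$-weighted products of a combinatorial factor (the number of ways to place the fan of triangles around the root, times $\nu$-weights of their edges) and the Boltzmann weights $Z_{\omega'}(\nu,t_\nu)$ of the slot fillings, where the $\omega'$ are subwords of $\omega$. The upshot is that $\plim(|\partial \overline{B}_1(\mathbf T_\infty)| = p)$ is, up to the uniform multiplicative constant $\mathcal Z(\nu_c,t_{\nu_c})/\kappa$, equal to $t_{\nu_c}^{-p}$ times the coefficient of $y^p$ in an explicit generating series built from $\sum_{|\omega|=p}\kappa_\omega$ and from the series $Z(t_{\nu_c},t_{\nu_c}y)$ of slot fillings. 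Both of these have radius of convergence exactly $y_c = \tfrac35(1+\sqrt 7)$ by Lemma~\ref{lem:growthCm} and Lemma~\ref{lem:analytic}, and since $t_{\nu_c}^{-1} < y_c / t_{\nu_c}$ ... more carefully: the weighting by $t_{\nu_c}^{-p}$ combines with the $y^p$ expansion so that one is really reading off $[y^p]$ of a series analytic at least up to radius $y_c/t_{\nu_c}^{?}$; the point is to check that after accounting for all $t_{\nu_c}$ powers the effective radius of convergence in $p$ exceeds $1$, i.e. that $\sum_p \plim(|\partial \overline{B}_1| = p)\, \lambda^{-p} < \infty$ for some $\lambda < 1$. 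Concretely, one verifies that the perimeter generating function $\sum_p \plim(|\partial \overline{B}_1(\mathbf T_\infty)| = p)\, z^p$ has radius of convergence $y_c / (\text{something}) > 1$ by comparing the exponential growth rate $t_{\nu_c}^{-3}$ of $[t^{3n}]Z_p$-type quantities with the radius $y_c$ of the boundary-length series; a numerical check that this effective radius exceeds $1$ completes the argument and produces the constant $\lambda < 1$ and then $c$.

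The main obstacle is bookkeeping the decomposition of the hull carefully enough that the resulting generating series is genuinely the one controlled by Lemmas~\ref{lem:growthCm} and~\ref{lem:analytic}: one must check that the slots are filled by \emph{simple}-boundary Boltzmann triangulations (so the relevant series are the $Z_\omega$, not the $Q_p$), keep track of the $\nu_c$-weights and avoid double-counting the fan edges, and handle the loop-root case (where an extra whole-sphere Boltzmann factor appears, contributing only a bounded multiplicative constant). Once the identification is in place, the exponential tail is immediate from the strict inequality $y_c > 1$ together with Pringsheim-type positivity, so the essential content is the combinatorial reduction plus invoking the already-established radius-of-convergence results; I would defer the verification that the numerical inequality gives $\lambda<1$ to the companion computations \cite{Maple}.
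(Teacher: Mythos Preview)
Your approach is essentially the paper's: decompose the hull of radius $1$, use the spatial Markov property (equivalently your absolute-continuity route via $\mathbf M_1$) to write $\plim(|\partial\overline{B}_1|=p)$ as $\kappa^{-1}\sum_{|\omega|=p}\kappa_\omega t_{\nu_c}^{-p}\times(\text{slot weights})$, and then invoke Lemma~\ref{lem:growthCm} plus a numerical check. However, your description of the decomposition is off in a way that muddles the bookkeeping. The slots between consecutive ``down'' triangles of the fan are \emph{not} triangulations with variable simple boundary length contributing to $p$; each slot is a Boltzmann triangulation of the $2$-gon (boundary $\ps\omega_i$ for some $\omega_i\in\{\ps,\ns\}$), and the boundary perimeter $p$ is exactly the number of down triangles. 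So the ``series of slot fillings'' is not $Z(t_{\nu_c},t_{\nu_c}y)$ at all; each slot simply contributes a factor $Z_{\ps\omega_i}(t_{\nu_c})$, and Lemma~\ref{lem:analytic} plays no role here.

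Concretely, the paper obtains
\[
\plim\bigl(|\partial\overline{B}_1(\mathbf T_\infty)|=p\bigr)\ \le\ \mathrm{Cst}\cdot \kappa_p\cdot\Bigl(\tfrac{\max(Z_{\ps\ps},Z_{\ps\ns})(t_{\nu_c})}{t_{\nu_c}}\Bigr)^{p-1},
\]
and the only numerical inequality to verify is that $\max(Z_{\ps\ps},Z_{\ps\ns})(t_{\nu_c})/t_{\nu_c}<y_c$, where $y_c=\tfrac35(1+\sqrt7)$ is the growth rate of $\kappa_p$ from Lemma~\ref{lem:growthCm}. That ratio is about $0.105$, comfortably below $y_c\approx 2.19$, so the bound is immediate. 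Once you correct the slot description your plan works and matches the paper line for line; your vaguer ``$y_c/(\text{something})>1$'' would not have led you to the right inequality without first pinning down that the slots are $2$-gons.
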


\begin{proof}
As illustrated in Figure~\ref{fig:1hull} and described above, the hull of radius 1 can be decomposed into its faces sharing an edge with the boundary and slots. The slots are filled with Boltzmann triangulations of the 2-gon with boundary condition $(\ps,\ps)$ or $(\ps,\ns)$. Special care has to be taken if the root is a loop, then the slot containing it, is slightly different and can be decomposed into a Boltzmann triangulation of the 1-gon and a Boltzmann triangulation of the 3-gon with boundary condition $(\ps,\omega_1,\ps)$, where $\omega = \omega_1\ldots \omega_p$ gives the boundary condition of the hull of radius 1. 
The spatial Markov property stated in Proposition~\ref{prop:spatialIIPT} hence yields:
\begin{align*}
\mathbb P_\infty \left( |\partial \overline{B}_1(\mathbf T _\infty)| = p \right)
& =
\sum_{|\omega| = p} \mathbb P_\infty \left( \partial \overline{B}_1(\mathbf T _\infty) = \omega \right) \\
& = \frac{1}{\kappa}
\sum_{\omega=\omega_1 \cdots \omega_p} \left(\kappa_\omega t_{\nu_c}^{-|\omega|}
\Big(Z_{\ps\omega_1}(t_{\nu_c})+2Z_{\ps\ps\omega_1}(t_{\nu_c})Z_{\ps}(t_{\nu_c}))\Big)\Big(\prod_{i=2}^p Z_{\ps \omega_i}(t_{\nu_c})
\Big)
\right)\\
&\leq \mathrm{Cst} \cdot \kappa_p \cdot \left( \frac{Z_{\ps\ps}(t_{\nu_c}) \wedge Z_{\ps\ns}(t_{\nu_c})}{t_{\nu_c}} \right)^{p-1}
\end{align*}
where the constant does not depend on $p$.
And the result follows since
\[
\frac{Z_{\ps\ps}(t_{\nu_c}) \wedge Z_{\ps\ns}(t_{\nu_c})}{t_{\nu_c}}
=
\frac{131}{600} \frac{4-\sqrt{7}}{(50 \sqrt{7} - 110)^{1/3}} \simeq 0.105 <y_c = 3(1+\sqrt{7})/5
\]
from the value of $y_c$ given in Lemma \ref{lem:growthCm}.
\end{proof}

Let us now turn our attention to the root-degree of Boltzmann triangulations. 
\begin{prop} \label{prop:BoltzDegTail}
Let $\omega$ be a non-empty word on $\{\ps,\ns\}$ and $\mathbf D_\omega$ be the degree of the root of a Boltzmann triangulation with boundary condition $\omega$ and parameter $\nu=\nu_c$. Then, there exist two constants $c$ and $\lambda<1$, such that, for every $\omega$ and every $k \geq 1$,
\[
\mathbb P \left( \mathbf D_\omega \geq k \right) \leq c \, \lambda^k.
\]
\end{prop}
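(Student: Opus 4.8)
The plan is to peel the root edge of a Boltzmann triangulation with boundary condition $\omega$ and track the root-vertex degree through this exploration. Recall from Proposition~\ref{prop:PeelSimple} the root-edge deletion equation: when we look at the inner face incident to the root edge of a triangulation counted by $Z_{ab\omega}$, the third vertex of that face either carries spin $\ps$ or $\ns$, or the face is ``pinched'' against a boundary vertex, splitting the map into two triangulations with simple boundaries. Crucially, in the first two cases the newly revealed vertex is \emph{not} incident to the root vertex, so revealing it contributes $0$ to the root degree; the root degree only grows through faces that contain the root vertex. So I would instead set up the peeling along the boundary edge that is the root edge and, at each step where the explored face contains the root vertex, record a $+1$. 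More precisely, the number of faces incident to the root vertex is exactly the root degree $\mathbf D_\omega$ (or $\mathbf D_\omega \pm O(1)$ depending on loops at the root), and each such face is revealed by one peeling step along the two boundary arcs emanating from the root vertex.

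The key quantitative input is the same computation that appears in the proof of Proposition~\ref{prop:HullPerimTail}: each time a face incident to the root vertex is revealed, a new edge of the (new) boundary is created, adjacent to a slot that gets filled by an independent Boltzmann triangulation of the $2$-gon (or, for the very first or last step, a $1$-gon or $3$-gon), and the ``cost'' of prolonging the exploration by one such step carries a multiplicative factor bounded by
\[
\frac{Z_{\ps\ps}(\nu_c,t_{\nu_c}) \vee Z_{\ps\ns}(\nu_c,t_{\nu_c})}{t_{\nu_c}} \cdot \frac{1}{y_c} \cdot (\text{bounded ratio of } \kappa\text{'s}),
\]
where the factor $1/y_c$ comes from the radius of convergence of $\sum_p \kappa_p y^p$ established in Lemma~\ref{lem:growthCm} (or of $Z(t_{\nu_c},t_{\nu_c}y)$ in Lemma~\ref{lem:analytic}): summing over boundary lengths $p$ of the part of the map not yet explored contributes a factor $\kappa_{p}$, and $\kappa_p \approx \mathrm{Cst}\cdot y_c^{-p}\cdot\mathrm{poly}(p)$, which is geometric. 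The numerical fact
\[
\frac{Z_{\ps\ps}(\nu_c,t_{\nu_c}) \vee Z_{\ps\ns}(\nu_c,t_{\nu_c})}{t_{\nu_c}} < y_c
\]
— which is exactly the inequality displayed at the end of the proof of Proposition~\ref{prop:HullPerimTail}, with $\simeq 0.105 < 3(1+\sqrt7)/5$ — then guarantees that the per-step multiplicative factor is a constant $\lambda<1$, so that
\[
\mathbb P(\mathbf D_\omega \geq k) \leq c\,\lambda^k
\]
with $c$ and $\lambda$ \emph{independent of $\omega$}, because the only $\omega$-dependence sits in the first peeled face, whose contribution is a bounded prefactor. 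This uniformity in $\omega$ is essential, since in Proposition~\ref{prop:HullPerimTail} the boundary word $\omega$ of the hull of radius $1$ is itself random.

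The main obstacle I anticipate is bookkeeping rather than conceptual: one must carefully match the peeling decomposition of a Boltzmann triangulation with boundary $\omega$ to the exact products of $Z$'s appearing in Proposition~\ref{prop:PeelSimple}, handle the three special cases at the root (root edge a loop or not, and the first/last face which may be a $1$- or $3$-gon), and verify that after revealing $k$ faces around the root vertex the remaining region is a triangulation with \emph{simple} boundary of some length $p \geq k$, so that the estimate $\kappa_p \leq \mathrm{Cst}\cdot y_c^{-p}\,p^{c'}$ applies and the geometric sum over $p$ converges with the stated $\lambda$. One also needs that all the finite quantities involved — $Z_{\ps}(\nu_c,t_{\nu_c})$, $Z_{\ps\ps}(\nu_c,t_{\nu_c})$, etc. — are finite, which follows from Theorem~\ref{theo:combimain} and the definition of $\pB$. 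Once these pieces are in place the exponential bound is immediate, and the argument is robust enough to work for any $\nu$ such that $(Z_{\ps\ps}(\nu,t_\nu)\vee Z_{\ps\ns}(\nu,t_\nu))/t_\nu$ stays below the corresponding radius of convergence, which is the source of the range mentioned in Remark~\ref{rem:nurec}.
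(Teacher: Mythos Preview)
Your plan imports the machinery of Proposition~\ref{prop:HullPerimTail} into a setting where it does not apply. In a Boltzmann triangulation with fixed boundary $\omega$ there is no IIPT component, so the constants $\kappa_p$ and the series $\sum_p\kappa_p y^p$ play no role; the peeling transition probabilities are ratios of the $Z_{\omega'}(\nu,t_\nu)$'s only. Your assertion that the ``inner vertex'' cases contribute $0$ to the root degree is also off: if you peel the edge incident to the root vertex, then \emph{every} revealed face contains the root vertex and contributes at least $+1$ to its degree. The real difficulty is that whenever the revealed face is a loop at the root (its third vertex coincides with the root), the triangulation splits into two pieces \emph{each} containing the root, and both must be explored. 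This happens with probability $\nu^{\mathbf{1}(\omega_1=\omega_p)}\, t_\nu\, Z_\ps(\nu,t_\nu)$ at every step, independently of the current boundary length, so the exploration is a genuine multitype branching process rather than a linear chain with a contracting per-step factor.

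The paper's proof encodes this as a branching process whose types are boundary words ending in $\ps$, dominates it by an explicit five-type branching process, and checks numerically that the spectral radius of the resulting $5\times5$ mean offspring matrix is approximately $0.98985<1$ at $\nu_c$. That spectral-radius condition---not the inequality $Z_{\ps a}(t_{\nu_c})/t_{\nu_c}<y_c$---is what restricts the method to the range in Remark~\ref{rem:nurec}; the inequality you invoke has enormous slack ($0.105$ versus $y_c\approx2.19$) and would hold far outside that range. Your claim that ``after revealing $k$ faces the remaining region has simple boundary of length $p\geq k$'' also fails along a single branch: the boundary length goes up by one when the third vertex is inner and down when it lies on the boundary, with no monotone relation to the number of steps.
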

\begin{rema} \label{rem:nurec}
The following proof of this proposition does not work for all values of $\nu$. However, numerical computations show that it should work for $\nu$ ranging from $0.3$ to $2.07$, therefore most of the proof is written for a generic $\nu$. The missing part to state the result for these values of $\nu$ is an argument to prove that the spectral radius of $M$ defined in \eqref{eq:meanmatrix} remains smaller than 1.
\end{rema}
\begin{proof}
In~\cite[Proposition 30]{CLGfpp}, a similar result is obtained for Boltzmann triangulations without spins (corresponding to the case $\nu=1$ in our setting). Following the same approach, we stochastically dominate the root degree by a subcritical branching process. To that aim, we explore a Boltzmann triangulation with a peeling process that will focus on exploring the neighbours of the root edge.

Fix $\omega$ a non empty word and let $\mathbf T_{\bol}^\omega$ be a Boltzmann triangulation with boundary condition $\omega=(\omega_1, \ldots , \omega_p)$. Recall that the root face of this triangulation lies on the right-hand side of its root edge. When the face adjacent to the left-hand side of the root face is revealed, several possibilities can occur. These possibilities are illustrated in Figure \ref{fig:BoltzPeel}  and their respective probabilities can be expressed in terms of the generating series of triangulations with simple boundary conditions evaluated at their radius of convergence. Let us enumerate them:
\begin{enumerate}
	\item If $p=2$, then  $\mathbf T_{\bol}^\omega$ may be reduced to the edge-triangulation. It happens with probability:
	\[
	\frac{\nu^{\delta_{\omega_1 = \omega_2}} t_\nu}{Z_\omega (\nu, t_\nu)}
	\]
	and the exploration stops if this event occurs.
	\item The third vertex of the revealed face is an inner vertex of $\mathbf T_{\bol}^\omega$ and has spin $a \in \{\ps,\ns\}$. This happens with probability
	\[
	\nu^{\delta_{\omega_1 = \omega_p}} t_\nu
	\frac{Z_{a\omega} (\nu, t_\nu)}{Z_\omega (\nu, t_\nu)}
	\]
	and the rest of the triangulation is distributed as $\mathbf T_{\bol}^{a\omega}$.
	\item The third vertex of the revealed face belongs to the boundary of $\mathbf T_{\bol}^\omega$, say the $i$-th starting from the target of the root edge, which has spin $\omega_i$. This happens with probability
	\[
	\nu^{\delta_{\omega_1 = \omega_p}} t_\nu
	\frac{
	Z_{(\omega_1, \ldots , \omega_i)}(\nu, t_\nu) \cdot Z_{(\omega_i, \ldots , \omega_p)} (\nu, t_\nu)
	}{Z_\omega (\nu, t_\nu)}
	\]
	and the two triangulations remaining to explore are independent and distributed respectively as $\mathbf T_{\bol}^{(\omega_1, \ldots , \omega_i)}$ and $\mathbf T_{\bol}^{(\omega_i, \ldots , \omega_p)}$. Since we are only interested in the root degree of $\mathbf T_{\bol}^{\omega}$, we can further distinguish two subcases:
	\begin{enumerate}
		\item The third vertex is not the root vertex (meaning $p >1$ and $i \neq p$). In that case only the subtriangulation distributed according to $\mathbf T_{\bol}^{(\omega_i, \ldots , \omega_p)}$ contains the root vertex of $\mathbf T_{\bol}^{\omega}$ and we discard the other remaining part.
		\item The third vertex is the root vertex (meaning $i = p$). In this case the two remaining subtriangulations contain the root vertex and we have to explore both of them. We will say that the exploration branches. Notice that in this case, the two subtriangulations are distributed as $\mathbf T_{\bol}^{\omega}$ and $\mathbf T_{\bol}^{\ps}$, and that the probability of this event simplifies into $\nu^{\delta_{\omega_1 = \omega_p}} t_\nu Z_\ps(\nu,t_\nu)$.
	\end{enumerate}
\end{enumerate}
\begin{figure}[!ht]
\begin{center}
\includegraphics[width=0.9\textwidth]{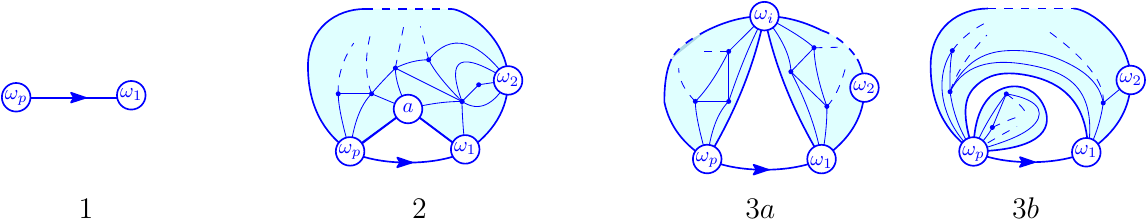}
\caption{\label{fig:BoltzPeel}The different cases that can occur in the branching exploration of a triangulation. The labels $1$, $2$, $3a$ and $3b$ refer to the description in the text.}
\end{center}
\end{figure}

When the exploration is complete, every edge adjacent to the root edge is discovered, and each exploration step (taking into account the steps of every branch of the exploration) increases the degree by $1$ or $2$ (for loops).
Therefore, the degree of the root vertex of $\mathbf T_{\bol}^{\omega}$ is bounded from above by twice the total number of particles in a multitype branching process $\mathcal B$ where the types of the particles are words in $\{\ps,\ns\}^{\mathbb N^\star}$ ending by $\ps$ (which is always the spin of the root vertex) and whose transition probabilities are given by:
\begin{itemize}
 \item Case 1: A particle of type $(a,\ps)$ has \textbf{no child} with probability
 \[
 \frac{\nu^{\mathbf 1 (a = \ps)} t_\nu}{Z_{a\ps} (\nu, t_\nu)}.
 \]
 \item Case 2: A particle of type $\omega=(\omega_1, \ldots,\omega_p)$ has \textbf{one child} of type $a\omega$ with probability
 	\[
	\nu^{\mathbf 1 (\omega_1 = \omega_p)} t_\nu
	\frac{Z_{a\omega} (\nu, t_\nu)}{Z_\omega (\nu, t_\nu)}.
	\]
 \item Case 3a: A particle of type $\omega=(\omega_1, \ldots,\omega_p)$ with $p >1$ has \textbf{one child} of type $(\omega_i,\ldots, \omega_p)$ with $i < p$ with probability
 	\[
	\nu^{\mathbf 1 (\omega_1 = \omega_p)} t_\nu
	\frac{
	Z_{(\omega_1, \ldots , \omega_i)} (\nu, t_\nu) \cdot Z_{(\omega_i, \ldots , \omega_p)} (\nu, t_\nu)
	}{Z_\omega (\nu, t_\nu)}.
	\]
 \item Case 3b: A particle of $\omega$ has \textbf{two children} of respective types $\ps$ and $\omega$ with probability
 \[
\nu^{\mathbf 1 (\omega_1 = \omega_p)} t_\nu Z_\ps(\nu,t_\nu).
 \]
\end{itemize}

The branching process $\mathcal B$ has an infinite number of types, which makes it difficult to analyze. We introduce another branching process, denoted $\mathcal B'$, that stochastically dominates $\mathcal B$, has only finitely many types (and as few as possible!) and is subcritical. Since only particles of type $\ps\ps$ and $\ns\ps$ can die and branching always give birth to particles of type $\ps$, we keep these three types and group types of length larger than two together. To get an interesting bound, we end up keeping five types, denoted $\ps, \ps\ps , \ns \ps, \Omega \ps \ps, \Omega \ns \ps$, where the last two regroup the corresponding original types of length larger than two.

The offspring distribution for type $\ps$ in $\mathcal B'$ is the same as in $\mathcal B$. Namely, an individual of type $\ps$ has:
 \begin{itemize}
 	\item \textbf{Two} children of type $\ps$ with probability
 $\displaystyle
\nu t_\nu Z_{\ps} (\nu,t_\nu)$.
 	\item \textbf{One} child of type $\ps\ps$ with probability $\displaystyle \nu t_\nu\frac{Z_{\ps\ps} (\nu, t_\nu)}{Z_\ps (\nu, t_\nu)}$.
\item \textbf{One} child of type $\ns\ps$ with probability $\displaystyle \nu t_\nu\frac{Z_{\ns\ps} (\nu, t_\nu)}{Z_\ps (\nu, t_\nu)}$.
 \end{itemize}
The offspring distribution for types  $\ps\ps,$ and $\ns \ps$ in $\mathcal B'$ are the same as in $\mathcal B$, where all particles of type length larger than two are merged. Namely, for $a$ fixed in $\{\ns,\ps\}$, an individual of type $a\ps$ has:
\begin{itemize}
  	\item No children with probability $\displaystyle \frac{\nu^{\mathbf 1 (a = \ps)} t_\nu}{Z_{a\ps} (\nu, t_\nu)}$.
 	\item \textbf{One} child of type $a\ps$ with probability $\displaystyle \nu^{\mathbf 1 (a = \ps)} t_\nu Z_{\ps} (\nu,t_\nu)$.
 	\item \textbf{Two} children of types $\ps$ and $a\ps$ with probability $\displaystyle \nu^{\mathbf 1 (a = \ps)} t_\nu Z_{\ps} (\nu,t_\nu)$.
	\item \textbf{One} child of type $\Omega a \ps$ with probability $\displaystyle 1-\frac{\nu^{\mathbf 1 (a = \ps)} t_\nu}{Z_{a\ps} (\nu, t_\nu)} - 2 \, \nu^{\mathbf 1 (a = \ps)} t_\nu Z_{\ps} (\nu,t_\nu)$.
\end{itemize}

Fix $a\in \{\ps,\ns\}$. We now turn our attention to individuals of type $\Omega a \ps$. Since only individuals of type $\ps\ps$ and $\ns\ps$ can die, we want the probability of giving birth to such particles to be smaller in $\mathcal B'$ than in $\mathcal B$. For $\omega  \in \{\ps,\ns\}^+$, the probability that a particle of type $\omega a \ps$ has a child of type $a \ps$ in $\mathcal{B}$ is equal to: 
\[
\nu^{\mathbf 1 (\omega_1 = \ps)} t_\nu
\frac{ Z_{\omega a} (\nu, t_\nu)\cdot Z_{a\ps} (\nu, t_\nu) }{Z_{\omega a\ps} (\nu, t_\nu)}.
\]
To get a lower bound (independent on $\omega$) for these probabilities, we use the functional equation of Proposition \ref{prop:PeelSimple} evaluated at $t=t_\nu$:
\begin{align*}
Z_{\omega a}(\nu, t_\nu)
&=
\frac{
\nu^{\mathbf{1} (a=\omega_1)} t_\nu
}
{1-2 \nu^{\mathbf{1}(a=\omega_1)} t_\nu Z_\ps(\nu, t_\nu)}
\left(
Z_{\omega a \ps}(\nu, t_\nu)+Z_{\omega a \ns}(\nu, t_\nu)
+
\sum_{\omega =\omega'c\omega''}Z_{\omega'c}(\nu, t_\nu)Z_{c\omega''a}(\nu, t_\nu)\right),
\end{align*}
which gives:
\[
\nu^{\mathbf 1 (\omega_1 = \ps)} t_\nu
\frac{Z_{\omega a} (\nu, t_\nu)\cdot Z_{a\ps} (\nu, t_\nu)}{Z_{\omega a\ps} (\nu, t_\nu)}
\geq
\frac{
(1 \wedge \nu)^2 t_\nu^{2} Z_{a\ps}(\nu, t_\nu)
}
{1-2 \, (1 \wedge \nu) \, t_\nu Z_\ps(\nu, t_\nu)}.
\]
Hence, we set that an individual of type $\Omega a \ps$ in $\mathcal B'$ has:
\begin{itemize}
 	\item \textbf{One} child of type $a\ps$ with probability $\displaystyle \frac{(1 \wedge \nu)^2 t_\nu^{2} Z_{a\ps}(\nu, t_\nu)}{1-2 \, (1 \wedge \nu) \, t_\nu Z_\ps(\nu, t_\nu)}$.
	\item \textbf{Two} children of types $\ps$ and $\Omega a\ps$ with probability $\displaystyle (1 \vee \nu) \, t_\nu Z_{\ps} (\nu,\rho_\nu)$.
	\item \textbf{One} child of type $\Omega a \ps$ with probability $\displaystyle1- (1 \vee \nu) \, t_\nu Z_{\ps} (\nu,t_\nu)- \frac{(1 \wedge \nu)^2 t_\nu^{2} Z_{a\ps}(\nu, t_\nu)}{1-2 \, (1 \wedge \nu) \, t_\nu Z_\ps(\nu, t_\nu)}$.
\end{itemize}
The second probability is taken to be larger than the branching probability in $\mathcal B$ (hence the factor $(1 \vee \nu$).
\medskip

With these choices, we can couple two branching processes $\mathcal B$  and $\mathcal B'$ so that the total number of particles in $\mathcal B'$ is larger than the total number of particles in $\mathcal B$ in the following natural way. A particle of type $t$ (ending by $\ps$) in $\mathcal B$ is projected in $\mathcal B'$ to a particle of type $\ps$ if $t=\ps$, of type $a \ps$ if $t = a \ps$ with $a \in \{ \ps, \ns \}$, or of type $\Omega a \ps$ if $t$ has length at least $3$ and ends with $a \ps$, again with $a\in \{\ps,\ns\}$.

The offspring distribution of particles of type $\ps$, $\ps\ps$ or $\ns\ps$ is equal in $\mathcal B$ and in $\mathcal B'$. Fix $a\in \{\ps,\ns\}$ and $\omega \in \{\ps,\ns\}^+$, because of the differences in the offspring distribution of a particle of type $\Omega a \ps$ and of a particle of type $\omega a \ps$, the coupled branching processes $\mathcal B$ and $\mathcal B'$ may differ in two ways: 
\begin{itemize}
\item A particle of type $\omega a \ps$ may have only one child of type $\omega a \ps$ in $\mathcal{B}$, whereas its counterpart in the coupled branching process $\mathcal B'$ has two children: one of type $\Omega a \ps$ and one of type $\ps$. 
\item A particle of type $\omega a \ps$ may have one child of type $a \ps$ whereas its counterpart in the coupled branching process $\mathcal B'$ has one child of type $\Omega a \ps$.
\end{itemize}
To prove that the total number of particles (TNOP) in $\mathcal B$ is dominated by the TNOP in  $\mathcal B'$, it is then enough to prove that the TNOP in $\mathcal{B}$ started with a single particle of type $a\ps$ is dominated by the TNOP of $\mathcal{B}'$ started with a single particle of type $\Omega a \ps$ (for the same value of $a$).
Given the definition of the offspring distributions, a branching process $\mathcal B'$ started with a particle of type $\Omega a \ps$ has to produce at least one particle of type $a \ps$ (again for the same value of $a$) to go extinct. It is hence clear that the TNOP of a branching process $\mathcal B'$ is stochastically greater when the process is started with a particle of type $\Omega a \ps$ than with a particle of type $a\ps$, which concludes the proof.
 \medskip

To prove that $\mathcal{B}'$ is critical, we write the matrix of the mean number of children of each type for $\mathcal B'$ with the ordering $(\ps, \ps \ps, \Omega \ps \ps, \ns \ps, \Omega \ns \ps)$ (all generating series $Z_{\omega}$ are evaluated at $t_\nu$):
\begin{equation} \label{eq:meanmatrix}
M =
\begin{pmatrix}
2 \nu t_\nu Z_{\ps} & \nu t_\nu\frac{Z_{\ps\ps} }{Z_\ps} & 0 & \nu t_\nu\frac{Z_{\ns\ps} }{Z_\ps} & 0\\
\nu t_\nu Z_{\ps} & 2 \nu t_\nu Z_\ps& 1 - 2 \nu t_\nu Z_\ps- \frac{\nu t_\nu}{Z_{\ps\ps} }& 0 & 0\\
\nu t_\nu Z_{\ps} & \frac{
(1 \wedge \nu)^2 \rho_\nu^{2/3} Z_{\ps \ps}
}
{1-2 \, (1 \wedge \nu) \, t_\nu Z_\ps} & 1 - \frac{
(1 \wedge \nu)^2 \rho_\nu^{2/3} Z_{\ps\ps}
}
{1-2 \, (1 \wedge \nu) \, t_\nu Z_\ps}& 0 & 0 \\
t_\nu Z_{\ps} & 0 & 0 & 2 t_\nu Z_{\ps} & 1- 2 t_\nu Z_{\ps} -  \frac{t_\nu}{Z_{\ns\ps} }\\
t_\nu Z_{\ps} & 0 & 0 & \frac{
(1 \wedge \nu)^2 \rho_\nu^{2/3} Z_{\ns \ps}
}
{1-2 \, (1 \wedge \nu) \, t_\nu Z_\ps} & 1- \frac{
(1 \wedge \nu)^2 \rho_\nu^{2/3} Z_{\ns\ps}
}
{1-2 \, (1 \wedge \nu) \, t_\nu Z_\ps}
\end{pmatrix}.
\end{equation}
To finish the proof of the proposition, we check that the spectral radius of $M$ is strictly smaller than $1$. Since we have explicit formulas for each quantity appearing in $M$, we can easily compute its spectral radius at any specified $\nu$. For $\nu=\nu_c$, we obtain $0.98985 < 1$!
\end{proof}

\bibliographystyle{plain}
\bibliography{Ising}

\bigskip

\noindent \textsc{Marie Albenque,\\ LIX, \'Ecole Polytechnique, 91120 Palaiseau – France}

\bigskip

\noindent \textsc{Laurent M\'enard,\\ Laboratoire Modal'X, UPL, Univ. Paris Nanterre, F92000 Nanterre, France}

\bigskip

\noindent \textsc{Gilles Schaeffer,\\ LIX, \'Ecole Polytechnique, 91120 Palaiseau – France}

\end{document}